\numberwithin{equation}{section}
\theoremstyle{plain}
\newtheorem{theorem}[equation]{Theorem}
\newtheorem{proposition}[equation]{Proposition}
\newtheorem{lemma}[equation]{Lemma} 
\newtheorem{corollary}[equation]{Corollary}
\theoremstyle{definition}
\newtheorem{definition}[equation]{Definition}
\newtheorem{example}[equation]{Example}
\theoremstyle{remark}
\newtheorem{remark}[equation]{Remark} 
\newtheorem*{ack}{Acknowledgements}
\newcommand*{\intref}[2]{\def\tmp{#1}\ifx\tmp\empty\hyperref[#2]{\ref*{#2}}\else\hyperref[#2]{#1~\ref*{#2}}\fi}
\newcommand{\CAlg}{\mathsf{CAlg}}
\newcommand{\CSAlg}{\mathsf{CSAlg}}
\newcommand{\cosupp}{\operatorname{cosupp}}
\renewcommand{\dim}{\operatorname{dim}}
\newcommand{\End}{\operatorname{End}}
\newcommand{\Ext}{\operatorname{Ext}}
\newcommand{\tExt}{\operatorname{\rlap{$\smash{\widehat{\mathrm{Ext}}}$}\phantom{\mathrm{Ext}}}}
\newcommand{\ev}{{\mathsf{ev}}}
\newcommand{\GL}{\operatorname{GL}}
\newcommand{\Grp}{{\mathsf{Grp}}}
\newcommand{\Hom}{\operatorname{Hom}}
\newcommand{\Loc}{\operatorname{Loc}}
\newcommand{\Mod}{\operatorname{Mod}}
\newcommand{\OSp}{\operatorname{OSp}}
\newcommand{\Proj}{\operatorname{Proj}}
\newcommand{\res}{\operatorname{res}}
\newcommand{\uHom}{\underline{\Hom}}
\newcommand{\Set}{\mathsf{Set}}
\newcommand{\sgs}{{\mathsf{sgs}}}
\newcommand{\Spec}{\operatorname{Spec}}
\newcommand{\StMod}{\operatorname{StMod}}
\newcommand{\stmod}{\operatorname{stmod}}
\newcommand{\supp}{\operatorname{supp}}
\newcommand{\SVec}{\mathsf{SVec}}
\newcommand{\pisupp}{\pi\text{-}\supp}
\newcommand{\picosupp}{\pi\text{-}\cosupp}
\newcommand{\Ann}{\operatorname{Ann}}
\newcommand{\lra}{\longrightarrow}
\newcommand{\mcV}{\mathcal{V}}
\newcommand{\sfS}{\mathsf S} 
\newcommand{\sfT}{\mathsf T}
\newcommand{\bfV}{\mathbf{V}}
\newcommand{\bbA}{\mathbb A}
\newcommand{\bbG}{\mathbb G}
\newcommand{\bbM}{\mathbb M}
\newcommand{\bbP}{\mathbb P}
\newcommand{\bbZ}{\mathbb Z}
\newcommand{\fm}{\mathfrak{m}} 
\newcommand{\fp}{\mathfrak{p}}
\newcommand{\gam}{\varGamma} 
\newcommand{\lam}{\varLambda}
\newcommand{\one}{\mathds 1}
\renewcommand{\le}{\leqslant}
\renewcommand{\ge}{\geqslant}
\title[Stratification and duality]{Stratification and duality for unipotent finite supergroup schemes}
\author[Benson, Iyengar, Krause, and Pevtsova]{Dave Benson, Srikanth
  B. Iyengar, Henning Krause \\ and Julia Pevtsova}
\address{Dave Benson \\ 
Institute of Mathematics\\ 
University of Aberdeen\\ 
King's College\\ 
Aberdeen AB24 3UE\\ 
Scotland U.K.}
\address{Srikanth B. Iyengar\\ 
Department of Mathematics\\
University of Utah\\ 
Salt Lake City, UT 84112\\ 
U.S.A.}
\address{Henning Krause\\ 
Fakult\"at f\"ur Mathematik\\ 
Universit\"at Bielefeld\\ 
33501 Bielefeld\\ 
Germany.}
\address{Julia Pevtsova\\ 
Department of Mathematics\\ 
University of Washington\\ 
Seattle, WA 98195\\ 
U.S.A.}
\begin{document}
\dedicatory{To John Greenlees on his 60th birthday.}

\begin{abstract} 
We survey some methods developed in a series of papers, for classifying localising subcategories of tensor triangulated categories.
We illustrate these methods by proving a new theorem, providing such a classification in the case of the stable module category of a unipotent finite
supergroup scheme. 
\end{abstract}

\keywords{local duality, supergroup scheme, stratification, thick subcategories, support}
\subjclass[2020]{16G10 (primary); 20C20, 20G10 20J06, 18E30}

\date{20 October 2020}

\maketitle

\section{Introduction}

John Greenlees' influence on mathematics is reflected throughout this volume, and our own work has benefitted enormously from his
insights. We should like to mention in particular his collaboration with May~\cite{Greenlees/May:1992a} and Dwyer~\cite{Dwyer/Greenlees:2002a}
on derived completions and local cohomology; on the local cohomology spectral sequence in the context of group cohomology~\cite{Greenlees:1995a} and his subsequent work with Lyubeznik~\cite{Greenlees/Lyubeznik:2000a}; and on duality in algebra and topology; in particular, his work with
Benson~\cite{Benson/Greenlees:1997b, Benson/Greenlees:1997a,  Benson/Greenlees:2008a}, with Dwyer and Iyengar~\cite{Dwyer/Greenlees/Iyengar:2006a}.

A broad framework for understanding localisation and duality  was developed in a series of papers~\cite{Benson/Iyengar/Krause:2009a}--\cite{Benson/Iyengar/Krause/Pevtsova:bikp7}. Originally geared towards applications to representation theory of finite groups and finite group schemes,  the framework has been applied in a number of other areas, such as commutative algebra~\cite{Dell'Ambrogio/Stevenson:2013a},  ring spectra~\cite{Barthel/Heard/Valenzuela:2018a}, modules over algebras of cochains~\cite{Benson/Greenlees:2014a, Benson/Iyengar/Krause:2011c}, and equivariant KK-theory~\cite{Dell'Ambrogio:2010a}. This theory is closely related to that of  Balmer~\cite{Balmer:2005a, Balmer:2010a, Balmer/Favi:2007a}, but the point of view is different. 

The purpose of this paper is to give an outline of the theory, by way of explaining how it applies in the case of the stable module category of a unipotent finite supergroup scheme. In particular,  we shall prove the following theorem. 

\begin{theorem}
\label{th:main}
Let $G$ be a unipotent finite supergroup scheme over a field $k$ of positive characteristic $p\ge 3$. The theory of support gives a one to one correspondence between the localising subcategories of $\StMod(kG)$ and subsets of $\Proj H^{*,*}(G,k)$.
\end{theorem}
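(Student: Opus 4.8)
The plan is to run the standard \bik\ stratification machine, which we recall in the survey part of this paper. Write $\mcT = \StMod(kG)$ and $R = H^{*,*}(G,k)$. Since $G$ is unipotent the algebra $kG$ is local, so the unit $\one = k$ generates $\mcT$ as a localising subcategory, and consequently every localising subcategory of $\mcT$ is a tensor ideal; thus the theorem amounts to the assertion that $\mcT$ is \emph{stratified} by the central action of $R$. First I would record the structural inputs: that $R$ acts centrally on $\mcT$, that $R$ is Noetherian, and that $R$ is graded-commutative in the appropriate bigraded sense --- this is where $p \ge 3$ enters, ensuring that genuinely odd classes square to zero so that $\Proj R$ is well behaved. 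One also checks $\supp_R k = \Proj R$, so that, the irrelevant maximal ideal carrying no support, $\Proj R$ is the home of the whole theory. With this in hand, the general theory reduces stratification to two assertions: the local-to-global principle for the $R$-action on $\mcT$, and pointwise minimality, namely that for each $\fp \in \Proj R$ the localising subcategory $\Gamma_{\fp}\mcT$ has no proper nonzero localising subcategories.

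The local-to-global principle is automatic here because $\Spec R$ is Noetherian and $\mcT$ is generated by $\one$; this is one of the foundational results recalled earlier in the paper. So everything comes down to pointwise minimality, together with the nonvanishing $\Gamma_{\fp} k \ne 0$ for $\fp \in \Proj R$, which is immediate from $\supp_R k = \Proj R$. For minimality I would use the customary two-step reduction. The first step is a base-change argument: if $\fp$ is not a closed point, extend scalars along a field extension $K/k$ chosen so that the point in question becomes closed in $\Proj H^{*,*}(G_K, K)$, and then use the compatibility of the local cohomology functors with extension of scalars, together with faithful flatness of $KG_K$ over $kG$, to descend minimality from $K$ to $k$. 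This reduces the problem to closed points of $\Proj R$.

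The second step handles a closed point by means of the theory of $\pi$-points for finite supergroup schemes. After the base change, such a point is represented by a $\pi$-point $\alpha$ --- a flat map to $KG_K$ from the group algebra of a rank-one elementary supergroup scheme, which for $p \ge 3$ is either $K[u]/u^p$ or an exterior algebra $K[\sigma]/\sigma^2$ on a single odd generator --- and the detection theorem identifies $\Gamma_{\fp}\StMod(KG_K)$ with a localising subcategory that is controlled, via $\alpha$, by $\StMod$ of the source of $\alpha$. Since the stable module category of $K[u]/u^p$, and likewise of $K[\sigma]/\sigma^2$, is minimal by a classical computation (Benson--Carlson--Rickard), minimality of $\Gamma_{\fp}\mcT$ follows.

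I expect the main obstacle to be this second step: transporting the $\pi$-point machinery --- the classification of rank-one elementary sub-supergroup schemes, the detection of support along $\pi$-points, and the identification of $\Gamma_{\fp}\StMod(kG)$ with the local category of a $\pi$-point --- into the super setting, and verifying that minimality is genuinely preserved under the field extensions used in the first step. A prerequisite to be dispatched before any of this is the finite generation of $H^{*,*}(G,k)$ for a unipotent finite supergroup scheme, which I would take as known input. Granted these, the proof is a formal run through the stratification criterion recalled above.
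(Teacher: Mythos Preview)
Your overall architecture is right: since $G$ is unipotent every localising subcategory is tensor ideal, so stratification reduces to minimality of each $\gam_\fp\StMod(kG)$, and one treats closed points first and then reduces the general case to closed points. That is exactly how the paper proceeds. But the way you propose to establish minimality at a closed point does not match the super setting, and as written it would not go through.

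The source of a $\pi$-point for a finite supergroup scheme is not $K[u]/(u^p)$ or $K[\sigma]/(\sigma^2)$; it is the single superalgebra $A_K = K[t,\tau]/(t^p-\tau^2)$ with $t$ even and $\tau$ odd, and the map $A_K\to KG_K$ is required only to have \emph{finite flat dimension}, not to be flat. The elementary sub-supergroup schemes through which such maps factor are considerably richer than the two rank-one cases you list (see the Witt elementary family $E_{m,n}^-$). So the picture ``$\pi$-point $=$ flat map from a rank-one elementary'' is the finite-group-scheme picture transplanted without change, and it is not correct here.

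More importantly, the paper does \emph{not} argue minimality by identifying $\gam_\fp\StMod(KG_K)$ with a stratum controlled by $\StMod$ of the source and invoking minimality there. Instead it uses the tensor and Hom formulas for $\pi$-support and $\pi$-cosupport,
\[
\pisupp_G(M\otimes_kN)=\pisupp_G(M)\cap\pisupp_G(N),\qquad
\picosupp_G\Hom_k(M,N)=\pisupp_G(M)\cap\picosupp_G(N),
\]
together with the identifications $\supp_G=\pisupp_G$ and, at a closed point, $\supp_G=\cosupp_G$, to show that $\Hom_k(M,N)$ is not projective whenever $M,N$ are non-projective in $\gam_\fp\StMod(kG)$; this is the minimality criterion. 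Establishing those formulas and the detection theorems is where the real work lies (and where $p\ge 3$ is genuinely used, via the structure and detection theorems for elementary sub-supergroup schemes, not merely to make odd classes square to zero). Your sketch skips this machinery and replaces it with a step (``minimality of $\StMod$ of the source transfers'') that is not justified and, given the altered nature of $\pi$-points in the super case, is unlikely to be made precise in the form you suggest.
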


Here $\StMod(kG)$ denotes the stable module category of $kG$-modules, $H^{*,*}(G,k)$ the cohomology ring of $G$, and  $\Proj H^{*,*}(G,k)$ its projective spectrum. Supergroup schemes are introduced Section~\ref{se:schemes} and the result above is proved in Section~\ref{se:localising}. The statement above speaks of ``the" theory of support, but in fact there are at least two, quite distinct, notions of support in this context. One is obtained from specialising the support theory introduced in \cite{Benson/Iyengar/Krause:2009a}, which is based on cohomology and applies to any compactly generated triangulated category with a ring action; in this case $\StMod(kG)$ with the canonical action of $H^{*,*}(G,k)$. The other notion of support is inspired by the theory of $\pi$-points introduced by Friedlander and Pevtsova~\cite{Friedlander/Pevtsova:2007a} in the context of finite group schemes. That these two notions of support coincide for modules over finite supergroup schemes is one of the crucial steps in proving the theorem above. 

Theorem~\ref{th:main} was first proved,  with no restrictions on the prime $p$, for finite groups~\cite{Benson/Iyengar/Krause:2011a}, using ideas from commutative algebra, mainly the Bernstein-Gelfand-Gelfand correspondence.  A new proof, from a more representation theoretic perspective, was given in \cite{Benson/Iyengar/Krause/Pevtsova:2017a}, exploiting ideas from \cite{Friedlander/Pevtsova:2007a} and a concept of cosupport for representations, which is related to derived completions introduced by Greenlees and May~\cite{Greenlees/May:1992a}. These ideas also played a pivotal role in our proof of Theorem~\ref{th:main} for the case of finite group schemes~\cite{Benson/Iyengar/Krause/Pevtsova:2018a}.

Theorem~\ref{th:main} is a culmination of our work reported in \cite{Benson/Iyengar/Krause/Pevtsova:bikp5, Benson/Iyengar/Krause/Pevtsova:bikp7, Benson/Pevtsova:2020a}. We sketch the details of how to put these pieces together in Section~\ref{se:homeo}.  This work take as a starting point the applications of the theory of super polynomial functors to representations and cohomology of finite  supergroup schemes, developed in a series of papers by Drupieski and Kujawa~\cite{Drupieski:2016a, Drupieski/Kujawa:2019a,Drupieski/Kujawa:2019c}.  Their work~\cite{Drupieski/Kujawa:1811.04840} represents a  development parallel to our own.  The main difference is that we treat both  connected and non-connected cases of unipotent supergroup schemes  and are interested in infinite dimensional $kG$-modules. In  \cite{Drupieski/Kujawa:1811.04840} one can find many intricate results,  examples and calculations for finite dimensional modules over connected group schemes.

Finally, in Section~\ref{se:duality} we give a brief overview of the local duality for $\StMod(kG)$, summarised in the following result.

\begin{theorem}
\label{th:duality}
Let $G$ be a finite supergroup scheme over a field $k$ of positive characteristic $p\ge 3$. For each $\fp\in \Proj H^{*,*}(G,k)$, the corresponding  localising subcategory $\gam_\fp\StMod(kG)$ satisfies local duality.
\end{theorem}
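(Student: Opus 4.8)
The plan is to obtain Theorem~\ref{th:duality} as an instance of the general local duality theorem for compactly generated tensor triangulated categories equipped with a central action of a graded-commutative Noetherian ring, developed in \cite{Benson/Iyengar/Krause:2009a} and its sequels and carried out for finite group schemes in \cite{Benson/Iyengar/Krause/Pevtsova:2018a}. First I would check the hypotheses for $\mathsf{T}=\StMod(kG)$ with the canonical action of $R=H^{*,*}(G,k)$: the tensor unit $k$ is compact because $kG$ is finite dimensional, $\mathsf{T}$ is generated by the finite-dimensional $kG$-supermodules, and $R$ is Noetherian. The last point is the finite generation theorem for the cohomology of a finite supergroup scheme \cite{Drupieski:2016a}, and this is one place where the hypothesis $p\ge3$ enters. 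Granting it, the local cohomology functor $\gam_\fp$ and the completion functor $\lam^\fp$ are defined for every homogeneous prime, $\gam_\fp\StMod(kG)$ is the localising subcategory of $\fp$-local and $\fp$-torsion modules, and $\gam_\fp$ and $\lam^\fp$ restrict to mutually inverse equivalences between $\gam_\fp\StMod(kG)$ and $\lam^\fp\StMod(kG)$.

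The substantive input is that $kG$ is self-injective --- indeed, being a finite-dimensional Hopf superalgebra, it is a Frobenius algebra --- so that $\StMod(kG)$ carries a Tate-type duality. Writing $M^\vee=\fHom_{kG}(M,k)$ for the $k$-linear dual, viewed as a $kG$-supermodule via the antipode, the Frobenius structure produces a one-dimensional (possibly parity-shifted) $kG$-supermodule $\delta$ recording the Nakayama automorphism of $kG$, together with a natural isomorphism $\uHom_{kG}(M,N)^\vee\simeq\uHom_{kG}(N,\Omega(\delta\otimes M))$ for finite-dimensional $M$ and $N$. I would establish this over a general finite supergroup scheme by d\'evissage, along the extension of the constant quotient by the connected component and the filtration of the latter by Frobenius kernels, tracking the Berezinian contribution to the Nakayama automorphism. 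What matters is that $\delta$ is an \emph{invertible} object of $\StMod(kG)$, so that $\StMod(kG)$ has a dualizing object $W=\Omega\delta$ in the sense required by the general theory. I expect this bookkeeping to be the main obstacle: showing that $kG$ is Frobenius with an invertible --- and explicitly identifiable --- Nakayama twist, uniformly in the non-connected case, is exactly where the super structure must be handled with care, and where $p\ge3$ is used again.

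With these ingredients in place, local duality for $\gam_\fp\StMod(kG)$ should follow by the standard formalism (biduality on the $\fp$-complete objects together with the local-to-global principle). Matlis-type duality interchanges torsion and completion: $(\gam_\fp M)^\vee\simeq\lam^\fp(M^\vee)$ and $(\lam^\fp M)^\vee\simeq\gam_\fp(M^\vee)$, in the manner of Greenlees and May \cite{Greenlees/May:1992a}. Feeding this into the Tate-duality isomorphism and restricting attention to objects of $\gam_\fp\StMod(kG)$ shows that $M\mapsto\Omega(\delta\otimes\gam_\fp(M^\vee))$ preserves $\gam_\fp\StMod(kG)$ and defines there a contravariant self-equivalence $D_\fp$, compatible through $\gam_\fp$ with the duality on $\StMod(kG)$; moreover, for $C$ compact in $\StMod(kG)$ one reads off a natural isomorphism $\uHom_{kG}(\gam_\fp C,M)^\vee\simeq\uHom_{kG}(C,D_\fp M)$, and since $\End^{*}_{\StMod(kG)}(\gam_\fp C)$ is a finite module over $R_\fp$ this exhibits $D_\fp$ as a Serre functor on the compact objects of $\gam_\fp\StMod(kG)$ --- which is precisely the assertion that $\gam_\fp\StMod(kG)$ satisfies local duality. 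I would close by observing that, in contrast with Theorem~\ref{th:main}, this argument uses neither unipotence nor the stratification theorem, only that $kG$ is Frobenius and that $R$ is Noetherian, which is why Theorem~\ref{th:duality} holds for every finite supergroup scheme.
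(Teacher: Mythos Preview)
Your proposal is correct and follows essentially the same strategy as the paper: $kG$ is Frobenius, hence there is a Tate-type duality on $\stmod(kG)$ with a one-dimensional twist, and feeding this into the general local duality machinery for compactly generated categories with a Noetherian central action yields the result. The paper's treatment in Section~\ref{se:duality} is terser than yours on two points. First, for the Frobenius structure and the modular character $\delta_G$ (including its parity $\epsilon_G$), the paper simply cites \cite[\S4]{Benson/Pevtsova:2020a}; your proposed d\'evissage through Frobenius kernels is unnecessary, and the claim that $p\ge 3$ is needed for the Frobenius property is not right---that holds in any characteristic. Second, for the Tate duality isomorphism $\Hom_k(\uHom_{kG}(M,N),k)\cong\uHom_{kG}(N,\Omega(M\otimes_k\delta_G))$ the paper invokes Auslander's defect formula as in \cite[\S4]{Benson/Iyengar/Krause/Pevtsova:2019a}, and for the passage to local duality it points to \cite{Benson/Iyengar/Krause/Pevtsova:bikp6}, which covers arbitrary finite-dimensional Gorenstein algebras and so subsumes the present case directly. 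Your final observation---that neither unipotence nor stratification is used---matches the paper's remark that this is why Theorem~\ref{th:duality} applies to all finite supergroup schemes.
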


Observe that, in contrast with Theorem~\ref{th:main}, here $G$ is not restricted to be unipotent. For finite groups, such a statement was conjectured by Benson~\cite{Benson:2001a}  and proved by Benson and Greenlees~\cite{Benson/Greenlees:2008a}; see also \cite{Benson:2008a}. In \cite{Benson/Iyengar/Krause/Pevtsova:2019a} we gave a new proof of their result covering also the case of finite group schemes. It turns out that local duality is a general feature of Gorenstein algebras, and can be viewed as an extension of a duality theorem due to Auslander~\cite{Auslander:1978a} and Buchweitz~\cite{Buchweitz:1986a}  generalising Tate duality for finite groups. All this is explained in \cite{Benson/Iyengar/Krause/Pevtsova:bikp6}. One can view the result above as a variation, dealing with graded Gorenstein algebras.
 
In what follows, we have tried to present the development of the theory from elementary abelian $p$-groups to finite groups to finite group schemes to finite supergroups schemes. We focus mostly on stratification, and even then cannot hope to present a complete account of the theory in a short survey.
Thus the aim is to give just a flavour of numerous techniques and developments that happened in the theory of support varieties in (relatively) recent
years many of which were influenced by John Greenlees' work.

\begin{ack}
  It is a pleasure to acknowledge the support provided by the American
  Institute of Mathematics in San Jose, California, through their
  ``Research in Squares" program.  We also acknowledge the National Science
  Foundation under Grant No.\ DMS-1440140 which supported three of the
  authors (DB, SBI, JP) while they were in residence at the Mathematical Sciences
  Research Institute in Berkeley, California, during the Spring 2018
  semester.  Finally, two of the authors (DB, JP) are grateful for hospitality
  provided by City, University of London.

SBI was partly supported by NSF grant DMS-2001368. JP was partly supported by NSF grants DMS-1501146,  DMS-1901854, and a Brian and Tiffinie Pang faculty fellowship.
\end{ack}

\section{Local cohomology and support}
\label{se:local} 

Let $\sfT$ be a  triangulated category, with shift $\Sigma$, admitting all small coproducts, and compactly generated. The examples we have in mind come from algebra, topology and geometry. The \emph{centre} $Z^*(\sfT)$ of $\sfT$ is the graded ring whose degree $n$ part consists of the natural
transformations $\eta$ from the identity functor to $\Sigma^n$ satisfying $\eta\Sigma=(-1)^n\Sigma\eta$. These form a graded commutative ring which is usually not Noetherian. We assume that we are given a graded commutative Noetherian ring $R$ together with a homomorphism of rings $R\to
Z^*(\sfT)$. This is called an \emph{action} of $R$ on $\sfT$. For $X,Y$ in $\sfT$ set
\[
\Hom^{*}_{\sfT}(X,Y) \colonequals \bigoplus_{n\in\bbZ} \Hom_{\sfT}(X,\Sigma^n Y)\,;
\]
this has a structure of an $R$-module, compatible with morphisms in $\sfT$.

A subset $V$ of a topological space $X$ is \emph{specialization  closed} if $V$ contains the closure of its points.
We write $\Spec R$ for   the homogeneous primes ideals of $R$, with the Zariski topology.  For each specialisation closed subset $V$ of $\Spec R$, the subcategory $\sfT_V$ of \emph{$V$-torsion} objects in $\sfT$ is the full subcategory consisting  of the objects $X$ such that  $\Hom^*_\sfT(C,X)_\fp=0$
for each compact object $C$ in $\sfT$  and each $\fp\not\in V$. For $X$ in $\sfT$ there is a functorial triangle
\[ 
\gam_VX \to X \to L_VX
\]
where $\gam_VX$ is in $\sfT_V$, and $L_VX$ admits no non-zero maps from any object in $\sfT_V$.

\begin{definition}
\label{def:Gamma-p}
Given a $\fp$ in $\Spec R$ choose specialisation closed subsets $V$ and $W$ with $V \not\subseteq W$  and $V \subseteq W\cup \{\fp\}$, and define 
\[ 
\gam_\fp X \colonequals \gam_V L_WX = L_W\gam_VX. \]
\end{definition}

This turns out to be independent of choice of $V$ and $W$ satisfying these conditions, and defines an idempotent functor $\gam_\fp\colon \sfT\to\sfT$ which may be thought of as isolating the layer of $\sfT$ corresponding to the prime $\fp$, and consisting of the objects ``supported at $\fp$.''  The functor $\gam_\fp$ has a right adjoint, which we denote $\lam^{\fp}$; it  has to do with completions along $\fp$, and plays an equally important role in the theory;  for details, see~\cite{Benson/Iyengar/Krause:2009a, Benson/Iyengar/Krause:2012a}. 

\begin{definition}
\label{def:biksupp}
The \emph{support} and \emph{cosupport} of an object $X$ in $\sfT$ are the subsets
\begin{align*}
\supp_R(X) &\colonequals \{\fp\in \Spec R\mid \gam_\fp X\ne 0\}\\
\cosupp_R(X) &\colonequals \{\fp\in \Spec R\mid \lam^\fp X\ne 0\}
\end{align*}
It is not hard to prove that both support and cosupport detect zero objects:  an object $X$ in $\sfT$ is $0$ if and only if $\supp_R(X)=\varnothing$, if and only if $\cosupp_R(X)=\varnothing$.
\end{definition}

We focus mostly on support, but cosupport resurfaces in Section~\ref{se:localising}.  We write $\supp_R(\sfT)$ for $\bigcup \supp_R(X)$, where $X$ ranges over the objects in $\sfT$.

Under mild conditions, such as $R$ having finite Krull dimension,  the localising subcategory $\Loc_\sfT(X)$ of $\sfT$ generated by an object $X$ is equal to the localising subcategory $\Loc_\sfT(\{\gam_\fp X \mid \fp \in \supp_R(\sfT)\})$ generated by the objects $\gam_\fp X$. If this is the case, we say that the \emph{local-global principle} holds. Then there is a one to one correspondence between localising subcategories of $\sfT$ and functions assigning to
each $\fp$ in $\supp_R(\sfT)$ a localising subcatgory of $\gam_\fp\sfT$. The function corresponding to a localising subcategory $\sfS$ sends $\fp$ to $\sfS\cap\gam_\fp\sfT$. This is described in detail in~\cite{Benson/Iyengar/Krause:2011a}.

We say that an action of a graded commutative ring $R$ on $\sfT$ \emph{stratifies} $\sfT$ if the local-global principle holds and for each $\fp$ in $\supp_R(\sfT)$, the subcategory $\gam_\fp\sfT$ is a minimal with respect to inclusion, among localising subcategories of $\sfT$. Under these circumstances, there is one to one correspondence between localising subcategories of $\sfT$  and the subsets of $\supp_R(\sfT)$. The subset corresponding to a localising subcategory $\sfS$ is its support, $\supp_R\sfS$, by which we mean the set of primes $\fp$ for which  $\gam_\fp$ is not the zero functor on $\sfS$.

\begin{example}
\label{eg:StModkG}
Let $G$ be a finite group and $k$ a field of characteristic $p$ dividing the order of $G$. The stable module category $\StMod(kG)$ is a compactly generated triangulated category, with compact objects the finite dimensional $kG$-modules, and there is an action of the cohomology ring $H^*(G,k)$. In this case, we write $\supp_G(M)$ for $\supp_{H^*(G,k)}(M)$.

The support of $\StMod(kG)$ is $\Proj H^*(G,k)$, which is the set $\Spec H^*(G,k)$ excluding the  maximal ideal consisting of all elements of positive degree. When $M$ is finite dimensional,  $\supp_G(M)$ is the closed subset of $\Proj H^*(G,k)$ defined by $\Ann_{H^*(G,k)}\Ext^{*,*}(M,M)$.

It is proved  in~\cite{Benson/Iyengar/Krause:2011b} that when $G$ is a $p$-group this action stratifies $\StMod(kG)$.
\end{example}

Such a stratification does not hold for general finite groups, because the localising subcategories $\gam_\fp\StMod(kG)$ are \emph{tensor ideal}, namely they are closed under tensor products with all modules, whereas there may be localising subcategories that are not. For a  finite $p$-group, the only simple module is the trivial module $k$, so the localising subcategory it generates is the whole of $\StMod(kG)$, and hence all localising subcategories are tensor ideal. To address arbitrary finite groups, we must take the tensor structure into account. 

\subsection*{Tensor triangulated categories}
Suppose now that $\sfT$ is a compactly generated triangulated category that comes with a symmetric monoidal tensor product $\otimes \colon \sfT \times \sfT \to \sfT$, is exact in each variable, preserving small coproducts,  and with unit $\one$. In this situation, we say that $\sfT$ is a \emph{tensor triangulated category}. There is a particularly good kind of action of a graded commutative ring $R$ on such a $\sfT$;  namely, one that factor as $R \to \End^*_\sfT(\one)$ followed by the natural map $\End^*_\sfT(\one)\to Z^*(\sfT)$ induced by the tensor product. We say that such an action is \emph{canonical}.

Given a canonical action of $R$ on $\sfT$, the functor $\gam_\fp$ is naturally isomorphic to $\gam_\fp\one \otimes -$. The subcategory $\gam_\fp\sfT$ is \emph{tensor ideal}, meaning that it is closed under tensoring with arbitrary objects in $\sfT$. Denote by $\Loc^\otimes_\sfT(X)$ the tensor ideal localising
subcategory of $\sfT$ generated by an object $X$. The tensor version of the local-global principle holds in this situation, and says that $\Loc^\otimes_\sfT(X)$ is equal to the tensor ideal localising subcategory $\Loc^\otimes_\sfT(\{\gam_\fp X \mid \fp \in \supp_R(\sfT)\})$. So we get a one to one correspondence between tensor ideal localising subcategories of $\sfT$ and functions assigning to each $\fp$ in $\supp_R(\sfT)$ a tensor ideal localising subcategory of
$\gam_\fp\sfT$. 

We say that a  canonical action of a graded commutative ring $R$  \emph{stratifies} a tensor triangulated category $\sfT$ if for each $\fp\in\supp_R(\sfT)$, the subcategory  $\gam_\fp\sfT$ is minimal as a tensor ideal localising subcategory. See also the work of Hovey, Palmieri, and Strickland~\cite{Hovey/Palmieri/Strickland:1997a} where such a minimality plays a prominent role.

\begin{example}
Let $G$ be a finite group. The stable module category of $kG$-modules is a tensor triangulated category, with tensor unit the trivial module $k$. The
Tate cohomology ring $\widehat H^*(G,k)$ is none other than the graded ring  $\End^*_{\StMod(kG)}(k)$,  and so  has a canonical action on $\StMod(kG)$. 
While $\widehat H^*(G,k)$ is usually not Noetherian, its subring  $H^*(G,k)$ is, which then inherits the canonical action on $\StMod(kG)$. 
\end{example}

The following theorem is proved in~\cite{Benson/Iyengar/Krause:2011b}. 

\begin{theorem}
\label{th:stratification}
Let $G$ be a finite group and $k$ a field of characteristic $p$. As a tensor triangulated category, the action of $H^*(G,k)$ stratifies $\StMod(kG)$.
In particular, there is a bijection, defined by $\supp_G(-)$, between tensor ideal localising subcategories of $\StMod(kG)$ and subsets of $\Proj H^*(G,k)$.\qed
\end{theorem}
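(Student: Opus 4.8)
The plan is to reduce the statement for an arbitrary finite group $G$ to the case of elementary abelian $p$-groups — which is contained in the $p$-group stratification recorded in Example~\ref{eg:StModkG} — by a descent along the elementary abelian subgroups, in the spirit of Quillen stratification. The canonical action of $H^*(G,k)$ satisfies the tensor local--global principle and $\supp_G(\StMod(kG))=\Proj H^*(G,k)$, so by the discussion preceding the theorem it suffices to prove that for each $\fp\in\Proj H^*(G,k)$ the tensor ideal localising subcategory $\gam_\fp\StMod(kG)$ is minimal; the bijection defined by $\supp_G(-)$ is then formal. Since the action is canonical, $\gam_\fp$ equals $\gam_\fp k\otimes-$ and hence $\gam_\fp\StMod(kG)=\Loc^\otimes_{\StMod(kG)}(\gam_\fp k)$, so what I would set out to prove is that $\gam_\fp k\in\Loc^\otimes_{\StMod(kG)}(M)$ for every nonzero $M$ in $\gam_\fp\StMod(kG)$.

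I would first show that the tensor unit $k$ lies in the tensor ideal localising subcategory $\sfS$ generated by the permutation modules $\ind^G_E k=k[G/E]$, with $E$ running over the elementary abelian $p$-subgroups of $G$; the input here is Chouinard's theorem. Let $k\to Lk$ be the localisation away from $\sfS$. The projection formula $\ind^G_E(Z)\otimes_k Y\cong\ind^G_E(Z\otimes_k\res^G_E Y)$ and the Mackey formula together exhibit, for every $kE$-module $W$, the module $\ind^G_E W$ as a direct summand of $\ind^G_E W\otimes_k\ind^G_E k$, hence as an object of $\sfS$. Since $Lk$ admits no nonzero maps from any object of $\sfS$, the adjunction $\ind^G_E\dashv\res^G_E$ gives $\Hom^*_{\StMod(kE)}(W,\res^G_E Lk)=0$ for every $kE$-module $W$; taking $W=\res^G_E Lk$ forces $\res^G_E Lk=0$ in $\StMod(kE)$, and as this holds for every such $E$, Chouinard's theorem gives $Lk=0$, so $k\in\sfS$. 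Tensoring with $\gam_\fp k$ and applying the projection formula once more yields $\gam_\fp k\in\Loc^\otimes_{\StMod(kG)}(\{\ind^G_E(\res^G_E\gam_\fp k):E\})$. Because restriction is symmetric monoidal, because $\ind^G_E$ sends $\Loc^\otimes_{\StMod(kE)}(X)$ into $\Loc^\otimes_{\StMod(kG)}(\ind^G_E X)$ (once more by the projection and Mackey formulas), and because $\ind^G_E(\res^G_E M)\cong M\otimes_k k[G/E]$ already lies in $\Loc^\otimes_{\StMod(kG)}(M)$, the problem reduces to proving, for each elementary abelian $E\le G$, that $\res^G_E(\gam_\fp k)$ lies in $\Loc^\otimes_{\StMod(kE)}(\res^G_E M)$.

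Here I would invoke the elementary abelian case. Restriction intertwines the two ring actions along the finite homomorphism $\res^*\colon H^*(G,k)\to H^*(E,k)$, and the comparison of supports under restriction — part of the general machinery developed in our earlier papers — gives $\supp_{H^*(E,k)}(\res^G_E X)=\varrho^{-1}(\supp_{H^*(G,k)}(X))$ for every $kG$-module $X$, where $\varrho\colon\Spec H^*(E,k)\to\Spec H^*(G,k)$ is the induced map. As $M$ and $\gam_\fp k$ are nonzero objects of $\gam_\fp\StMod(kG)$, both have support $\{\fp\}$ over $H^*(G,k)$, so their restrictions to $E$ have the same support over $H^*(E,k)$, namely the fibre $\varrho^{-1}(\fp)$. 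Since $\StMod(kE)$ is stratified, objects with the same support generate the same tensor ideal localising subcategory, so $\Loc^\otimes_{\StMod(kE)}(\res^G_E M)=\Loc^\otimes_{\StMod(kE)}(\res^G_E\gam_\fp k)\ni\res^G_E(\gam_\fp k)$ — exactly what was needed. This establishes minimality of $\gam_\fp\StMod(kG)$ for every $\fp$, and hence the theorem. The geometric fact implicitly at work is Quillen's theorem that $\Proj H^*(G,k)$ is the union of the images of the $\Proj H^*(E,k)$, which guarantees that $\gam_\fp k$ is detected by some elementary abelian subgroup; it is itself recovered here from Chouinard's theorem applied to $\gam_\fp k$.

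The principal obstacle lies not in this descent, which once set up is mostly bookkeeping, but in the fact that the naive non-tensor version of the argument is simply false for groups that are not $p$-groups, since ordinary stratification of $\StMod(kG)$ then fails. One is therefore forced to carry the tensor-ideal structure through every step: the localisation away from $\sfS$, the functors $\res^G_E$ and $\ind^G_E$, and $\gam_\fp k\otimes-$ must all be checked to preserve tensor ideal localising subcategories, not merely localising ones. What makes this feasible — and is the technical heart of the proof — is the projection formula, together with the fact that every $kE$-module is a direct summand of the restriction of a module induced from $kG$; these let one pass between $\StMod(kG)$ and $\StMod(kE)$ while keeping the tensor-ideal constraint intact. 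Granting the elementary abelian case from Example~\ref{eg:StModkG} and the comparison of supports under restriction, the argument above completes the proof.
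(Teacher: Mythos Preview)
Your proposal is correct and follows the approach of~\cite{Benson/Iyengar/Krause:2011b}, which the paper itself only summarises by naming Quillen stratification and Chouinard's theorem as the key ingredients. Your descent argument---showing via Chouinard that $k$ lies in the tensor ideal localising subcategory generated by the $\ind^G_E k$, then transporting minimality down from the elementary abelian case using the projection formula and the compatibility of support with restriction along the finite map $H^*(G,k)\to H^*(E,k)$---is precisely how that reference proceeds.
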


A key step in the proof is a reduction to elementary abelian groups, which depends on the Quillen stratification theorem~\cite{Quillen:1971a,Quillen:1971b}, and work of Chouinard, recalled below.


\section{The rank variety} 
\label{se:rank}
For a finite group $G$, there is another description of support that is much better suited to computation.  Chouinard~\cite{Chouinard:1976a} proved that over an arbitrary commutative ring of coefficients $k$, a $kG$-module is projective if and only if its restriction to every elementary abelian subgroup $E$ of $G$ is a projective $kE$-module. If $k$ is a field of characteristic $p$, we only need to consider elementary abelian $p$-subgroups. In this case, when  $k$ is algebraically closed, Dade~\cite{Dade:1978b} proved that a  finite dimensional $kE$-module is projective if and only if its restriction to each cyclic shifted subgroup is projective. The precise statement is as follows. If $E\colonequals \langle g_1,\dots,g_r \rangle \cong (\bbZ/p)^r$, is an elementary abelian $p$-group,  set $X_i=g_i-1$ in $kE$  for $1\le i \le r$. Then $kE$ is a truncated polynomial ring:
\[
kE\cong \frac{k[X_1,\dots,X_r]}{(X_1^p,\dots,X_r^p)}\,.
\]
Let $J(kE)$ be the radical, $(X_1,\dots,X_r)$, of $kE$. For  $\lambda = (\lambda_1,\dots,\lambda_r)$ in $\bbA^r(k)$ set
\[ 
X_\lambda\colonequals \lambda_1 X_1 + \dots + \lambda_r X_r \quad\text{in $kE$} 
\]
and $\alpha_\lambda\colon k[t]/(t^p) \to kE$ for the homomorphism of $k$-algebras sending $t$ to $X_\lambda$. Given a $kE$-module $M$, we write $\alpha_\lambda^*(M)$ for the $k[t]/(t^p)$-module obtained by restriction along $\alpha_\lambda$.

\begin{theorem}[Dade~\cite{Dade:1978b}]
Let $k$ be an algebraically closed field of characteristic $p$, and $E$ an elementary abelian $p$-group. A finite dimensional dimensional $kE$-module $M$ is projective if and only if  $\alpha_\lambda^*(M)$ is projective for all $0\ne\lambda\in\bbA^r(k)$.\qed
\end{theorem}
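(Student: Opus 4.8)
The plan is to treat the two implications separately and to prove the substantive one by induction on the rank $r$ of $E$, with essentially all of the content concentrated in the rank-two case; recall that for the local algebras in sight ``projective'' is the same as ``free''. The easy implication is a change of coordinates: if $M$ is free over $kE$ and $0\ne\lambda\in\bbA^r(k)$, complete $X_\lambda$ to a $k$-basis $X_\lambda,Z_2,\dots,Z_r$ of the span of $X_1,\dots,X_r$; since $k$ has characteristic $p$ the $p$-th power map is additive on this span, so $X_\lambda^p=Z_j^p=0$ and $kE\cong (k[X_\lambda]/(X_\lambda^p))\otimes_k (k[Z_2,\dots,Z_r]/(Z_2^p,\dots,Z_r^p))$. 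Hence $kE$ is free over its subalgebra $k[X_\lambda]/(X_\lambda^p)$, and so $\alpha_\lambda^*(M)$ is free over $k[t]/(t^p)$.

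For the converse I would induct on $r$, with the cases $r\le 1$ immediate. So let $r\ge 2$, assume the statement for rank $r-1$ over every algebraically closed field, and let $M$ satisfy the hypothesis. Taking $\lambda=(0,\dots,0,1)$ shows $M$ is free over $R\colonequals k[X_r]/(X_r^p)$. The first ingredient is a Nakayama-type lemma: if in addition $\overline M\colonequals M/X_rM$ is free over $\overline{kE}\colonequals kE/(X_r)\cong k(\bbZ/p)^{r-1}$, then $M$ is free over $kE$. (Lift an $\overline{kE}$-basis of $\overline M$ to $m_1,\dots,m_s\in M$; the induced map $(kE)^s\to M$ is onto, being so modulo the nilpotent element $X_r$; its kernel $L$ is $R$-free, since the sequence is $R$-split ($M$ is $R$-free) and $(kE)^s$ is $R$-free; reducing modulo $X_r$ and using $\Tor_1^R(M,R/X_rR)=0$ gives $L/X_rL=0$, hence $L=0$.) It therefore suffices to verify that $\overline M$ satisfies the hypothesis of the rank $r-1$ statement and to invoke that case.

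Because $M$ is $R$-free, multiplication by $X_r^{p-1}$ induces an isomorphism $\overline M\cong X_r^{p-1}M$ of modules over $k[X_\nu]/(X_\nu^p)$ for every $\nu$, so it is enough to show that $X_r^{p-1}M$ is free over $k[X_\nu]/(X_\nu^p)$ for each $0\ne\nu\in\bbA^{r-1}(k)$. Fix such a $\nu$. The elements $X_\nu$ and $X_r$ are linearly independent, so they span a rank-two shifted subgroup $\langle X_\nu,X_r\rangle\cong k[X_\nu,X_r]/(X_\nu^p,X_r^p)$ inside $kE$, and each of the subalgebras $k[aX_\nu+bX_r]/((aX_\nu+bX_r)^p)$ with $(a,b)\ne0$ equals $k[X_\mu]/(X_\mu^p)$ for some nonzero $\mu\in\bbA^r(k)$. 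By hypothesis $M$ is free over every such subalgebra, so the restriction $M\da_{\langle X_\nu,X_r\rangle}$ satisfies the hypothesis of the rank-two case; granting that case, $M\da_{\langle X_\nu,X_r\rangle}$ is free over $k[X_\nu,X_r]/(X_\nu^p,X_r^p)$, and then $X_r^{p-1}M$ is visibly free over $k[X_\nu]/(X_\nu^p)$.

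This reduces the whole theorem to the rank-two case: $M$ is free over $kE=k[X,Y]/(X^p,Y^p)$ once $\alpha_\lambda^*(M)$ is free for all $0\ne\lambda$. Here I would induct on $\dim_k M$: since $kE$ is self-injective, a cyclic free submodule $kE\cdot m\cong kE$ is automatically a direct summand, so it can be split off (which decreases $\dim_k M$ and preserves the hypothesis), and such an $m$ exists as soon as $X^{p-1}Y^{p-1}M\ne 0$. Thus the crux is: \emph{if $M\ne 0$ satisfies the rank-two hypothesis then $X^{p-1}Y^{p-1}M\ne0$}. Writing $M=(k[X]/(X^p))\otimes_k k^n$ with $Y$ acting through a matrix $B(X)\in M_n(k[X]/(X^p))$, this says $B_0^{p-1}\ne0$ where $B_0\colonequals B(0)$; assuming $B_0^{p-1}=0$, one analyses the pencil $X+cY$, $c\in k$, whose Jordan type the hypothesis forces to be $[p^n]$ for every $c$ while $B_0^{p-1}=0$ pins down the associated graded of $X+cY$, and one plays these against each other — using that $k$ is algebraically closed and infinite, so that letting $c$ range over all of $k$ exhausts the spectrum of a suitable operator — to reach a contradiction. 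I expect this rank-two base case to be the main obstacle: everything preceding it is formal, and it is precisely where the geometry of the pencil of shifted subgroups, and the hypothesis that $k$ is algebraically closed, are genuinely used.
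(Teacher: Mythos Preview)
The paper does not actually prove this statement: it is quoted as Dade's theorem, with a citation and an end-of-proof symbol in lieu of an argument, so there is no proof in the paper against which to compare your attempt.

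On the merits of the proposal: the reduction from rank $r$ to rank two is correct and is the standard strategy. The Nakayama step, the identification $\overline M\cong X_r^{p-1}M$ as a module over $k[X_\nu]/(X_\nu^p)$, and the use of the rank-two case on the subalgebra generated by $X_\nu$ and $X_r$ to verify the inductive hypothesis for $\overline M$ are all fine.

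The genuine gap is exactly where you place it: the rank-two core. You correctly reduce to showing that if $0\ne M$ is free over every $k[aX+bY]/((aX+bY)^p)$ then $X^{p-1}Y^{p-1}M\ne 0$, but what you offer for this is only a gesture. You assume $B_0^{p-1}=0$ and speak of analysing the pencil $X+cY$ and ``the spectrum of a suitable operator'' without naming the operator, the invariant being tracked, or the mechanism of contradiction. This is precisely the substance of Dade's theorem, and it does not fall out of generic Jordan-type bookkeeping; the known arguments here invoke either Dade's original endo-permutation analysis, a careful rank and dimension count along the pencil of the sort recorded in standard textbook treatments, or cohomological input via Carlson's $L_\zeta$ modules. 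As written, your rank-two step is a plan rather than a proof, and you say as much yourself.
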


Based on this, Carlson~\cite{Carlson:1983a} introduced the notion of rank variety. 

\begin{definition}
Let $k$ be an algebraically closed field of characteristic $p$, and $E$ an elementary abelian $p$-group. The \emph{rank variety} of a finitely generated $kE$-module $M$ is  the subset
\[ 
V^r_E(M)\colonequals  \{\lambda\in\bbA^r(k)\mid \alpha_\lambda^*(M) \textrm{ is not  projective}\}
\]
of $\bbA^r(k)$. We write $V^r_{kE}(M)$ if the field of coefficients needs to be specified. Observe that $V^r_E(M)$ contains $0$ and is homogenous. It is also closed, for  projectivity over $k[t]/(t^p)$ is detected by a rank condition on the operator representing the action of $t$. This also  gives a method for calculating  equations defining $V^r_E(M)$. Ostensibly, this depends on the choice of generators for $E$ as an elementary abelian group. However, if $\alpha,\beta\colon k[t]/(t^p) \to kE$ are maps such that $\alpha(t)$ and $\beta(t)$ have the same image in $J(kE)/J^2(kE)$ then $\alpha^*(M)$ is projective if and only if $\beta^*(M)$ is projective. So it makes sense to think of the ambient affine space  $\bbA^r(k)$ as identified with $J(kE)/J^2(kE)$. We return to this  ambiguity in Section~\ref{se:schemes} when we discuss $\pi$-points. 
\end{definition}

The cohomology ring of $E$ is well-known and easy to compute:
\[
H^*(E,k) = 
\begin{cases}
k[y_1,\dots,y_r]  & \text{if $p=2$}\\
k[x_1,\dots,x_r] \otimes \Lambda(y_1,\dots,y_r)  & \text{for $p$ odd}
\end{cases}
\]
where $y_i$ is in degree one the $x_i$ is in degree two, and is the Bockstein $\beta(y_i)$ of $y_i$.

Let $k[Y_1,\dots,Y_r]$ be the coordinate ring of $\bbA^r(k)$.  If $p=2$, this can be identified with $H^*(E,k)$ in such a way that the $Y_i$  correspond to the $y_i$. If $p$ is odd, there is a twist:  we  have to identify $x_i$ with $Y_i^p$, so that there is a Frobenius twist involved in the identification of  $\Spec H^*(E,k)$ with $\Spec k[Y_1,\dots,Y_r]$. Carlson conjectured that  this identifies support and rank variety for a finite dimensional  $kE$-module.

\begin{theorem}[Avrunin, Scott~\cite{Avrunin/Scott:1982a}] 
\label{th:AS} 
Let $k$ be an algebraically closed field. Under the usual identification of radical homogeneous ideals in  $\Spec H^*(E,k)$  with affine cones in $\bbA^r(k)$, the radical ideal defining the subset $\supp_E(M)$ corresponds to $V^r_E(M)$ for  any finite-dimensional $kE$-module $M$. 
\end{theorem}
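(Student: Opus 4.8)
The plan is to reduce the statement to the known case of cohomological support and then transport the identification along the comparison between $\supp_E(M)$ and the rank variety $V^r_E(M)$ via a compatibility of both with restriction to cyclic shifted subgroups. First I would observe that both sides behave well under the operations $\alpha_\lambda^*$: for a one-parameter subgroup $\alpha_\lambda\colon k[t]/(t^p)\to kE$, Dade's theorem says $\lambda\in V^r_E(M)$ precisely when $\alpha_\lambda^*(M)$ is not projective, and the latter is equivalent to $\supp_{k[t]/(t^p)}(\alpha_\lambda^*M)\ne\varnothing$. On the cohomological side, $\alpha_\lambda$ induces a map $H^*(E,k)\to H^*(k[t]/(t^p),k)$ and one has the compatibility $\supp_{k[t]/(t^p)}(\alpha_\lambda^*M)=(\alpha_\lambda^*)^{-1}(\supp_E M)$, using that restriction of modules corresponds to pullback of cohomological support (this is the standard naturality of $\gam_\fp$ under the ring map). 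The point $\lambda$, regarded in $\bbA^r(k)$, corresponds under the identification $\Spec H^*(E,k)\cong\Spec k[Y_1,\dots,Y_r]$ (with the Frobenius twist when $p$ is odd) to a line through the origin, and $\lambda$ lies in the cone over a radical ideal $I$ iff the corresponding prime contains $I$ iff the map $\alpha_\lambda^*$ hits $\supp(H^*(E,k)/I)$.

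Second, I would assemble these observations. Write $\mcV$ for the radical ideal of $\supp_E(M)\subseteq\Proj H^*(E,k)$, extended to a cone in $\bbA^r(k)$ in the usual way. For $0\ne\lambda$, the chain of equivalences above gives: $\lambda\in V^r_E(M)\iff\alpha_\lambda^*(M)$ not projective $\iff\supp_{k[t]/(t^p)}(\alpha_\lambda^*M)\ne\varnothing\iff$ the line $\ell_\lambda$ meets $\supp_E(M)$ in $\Proj H^*(E,k)\iff\lambda$ lies in the cone $\mcV$. Since $V^r_E(M)$ is a homogeneous closed subset containing $0$, and two such cones agree iff they agree away from the origin, this shows $V^r_E(M)=\mcV$, i.e. the radical ideal defining $\supp_E(M)$ corresponds to $V^r_E(M)$ under the stated identification. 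I would be careful to state which direction of Dade's theorem and which finiteness hypotheses ($M$ finite-dimensional, $k$ algebraically closed) are being invoked at each step, and to note that the Frobenius twist in the odd-$p$ identification is exactly what reconciles $x_i\mapsto Y_i^p$ with the linear parametrisation $X_\lambda=\sum\lambda_iX_i$.

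The main obstacle I expect is verifying the naturality statement $\supp_{k[t]/(t^p)}(\alpha_\lambda^*M)=(\alpha_\lambda^*)^{-1}(\supp_E M)$ with the correct interpretation of the ring map on spectra, since $\alpha_\lambda$ is not flat and the induced map on cohomology is not an isomorphism; one needs the behaviour of the $\gam_\fp$ construction (or equivalently, of rank-variety-type invariants) under a finite change of rings, together with the fact that $k[t]/(t^p)$ has cohomology a polynomial-times-exterior ring so its $\Proj$ is a single point. An alternative, perhaps cleaner, route that sidesteps some of this is to cite the Benson--Iyengar--Krause identification of $\supp_E$ with the rank variety directly through $\pi$-point support (forward-referenced in the discussion of $\pi$-points in Section~\ref{se:schemes}), but since the theorem as stated is attributed to Avrunin--Scott, the expected proof is the cohomological-comparison argument above, whose historical original uses the Quillen-style dimension-shifting and the explicit computation of $H^*(E,k)$.
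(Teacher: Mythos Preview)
The paper does not prove this theorem; it is stated with attribution to Avrunin and Scott and no argument is supplied, so there is nothing in the paper to compare your proposal against.

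Your argument, however, is circular at the decisive step. The ``naturality'' you invoke,
\[
\supp_{k[t]/(t^p)}(\alpha_\lambda^*M)=(\alpha_\lambda^*)^{-1}(\supp_E M),
\]
is not a general compatibility of $\gam_\fp$ under change of rings; unpacked, it \emph{is} the theorem. Since $\Proj H^*(k[t]/(t^p),k)$ is a single point, the left side is nonempty precisely when $\alpha_\lambda^*M$ is not projective, i.e.\ when $\lambda\in V^r_E(M)$, while the right side is nonempty precisely when that single point maps into $\supp_E(M)$, i.e.\ when $\lambda$ lies in the cone over $\supp_E(M)$. So the displayed equality already asserts, point by point, that $V^r_E(M)$ and $\supp_E(M)$ coincide. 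You correctly flag this as the ``main obstacle'' in your last paragraph, but the proposal offers no argument for it: there is no off-the-shelf theorem guaranteeing that BIK support commutes with restriction along $\alpha_\lambda$, and the change-of-category compatibilities in the BIK framework that do hold for group algebras are themselves downstream of Avrunin--Scott (or its $\pi$-point descendants), so appealing to them here would be circular. The original proof is not formal: one computes the restriction $H^*(E,k)\to H^*(\langle 1+X_\lambda\rangle,k)$ explicitly---this is where the Frobenius twist $x_i\leftrightarrow Y_i^p$ for odd $p$ enters---and then uses a Quillen--Venkov/Alperin--Evens style nilpotence argument to relate the annihilator of $\Ext^*_{kE}(M,M)$ to projectivity of $\alpha_\lambda^*M$. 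That comparison is the entire substance of the theorem, and the alternative $\pi$-point route you mention does not bypass it for elementary abelian groups either.
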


Given an extension of fields $k\subseteq K$ and a $kE$-module $M$, let $M_K$ be the $KE$-module $K\otimes_k M$. Benson, Carlson and Rickard~\cite{Benson/Carlson/Rickard:1996a}  extended Dade's theorem to cover  infinitely generated modules, as follows:

\begin{theorem}
\label{th:BCR}
A $kE$-module $M$ is projective if and only if $\alpha^*_{\lambda}(M_K)$ is projective for all extension fields $K$  of $k$ and all $0\ne \lambda$ in $\bbA^r(K)$.
\end{theorem}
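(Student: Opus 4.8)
The plan is to prove the two implications separately, the forward one by a direct module-theoretic computation and the converse by contraposition via the support machinery of Section~\ref{se:local}.

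The forward implication is elementary. If $M$ is free over $kE$ then $M_K=M\otimes_k K$ is free over $KE$, so it is enough to observe that for $0\ne\lambda\in\bbA^r(K)$ the algebra $KE$ is free as a module over the subalgebra $\alpha_\lambda(K[t]/(t^p))=K[X_\lambda]$: renumbering so that $\lambda_1\ne0$, the elements $X_\lambda,X_2,\dots,X_r$ generate $KE$ and satisfy the defining relations of a truncated polynomial ring, whence $KE\cong K[X_\lambda,X_2,\dots,X_r]/(X_\lambda^p,X_2^p,\dots,X_r^p)$ is free over $K[X_\lambda]$. Restricting a free $KE$-module along $\alpha_\lambda$ therefore yields a free $K[t]/(t^p)$-module.

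For the converse I would argue by contraposition: assuming $M$ is not projective over $kE$, I want to produce an extension field $K$ and a point $0\ne\lambda\in\bbA^r(K)$ with $\alpha_\lambda^*(M_K)$ not projective. Since $kE$ is a local ring whose only simple module is $k$, the category $\StMod(kE)$ is compactly generated by $k$; hence $M$ non-projective means $M\ne0$ in $\StMod(kE)$, which by the machinery of Section~\ref{se:local} is equivalent to $\supp_{H^*(E,k)}(M)\ne\varnothing$, a subset of $\Proj H^*(E,k)$. Fix a prime $\fp$ in it. The idea is to realise $\fp$ as a \emph{rational shifted subgroup} after a base change. Under the homeomorphism $\Proj H^*(E,k)\cong\bbP^{r-1}_k$ (with a Frobenius twist when $p$ is odd, so that one may first have to adjoin $p$-th roots to $k$), the prime $\fp$ is the generic point of an irreducible closed subvariety $W$; taking $K$ to be a suitable algebraic extension of the function field $k(W)$ produces a $K$-rational point of $\bbP^{r-1}_K$ lying over $\fp$, that is, a line $K\lambda$ for some $0\ne\lambda\in\bbA^r(K)$. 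Completing $\lambda$ to a basis of $K^r$ exhibits $\alpha_\lambda$ as the composite of an isomorphism $K[t]/(t^p)\xra{\ \cong\ }K\langle1+X_\lambda\rangle$ with the inclusion of $K\langle1+X_\lambda\rangle$ as a tensor factor of $KE$; thus $\alpha_\lambda^*(-)$ is genuine restriction along an inclusion of group algebras of (shifted) elementary abelian $p$-groups over $K$. It then remains to transport supports along two changes of rings: first the base change $H^*(E,k)\to H^*(E,K)$, under which $\supp_{H^*(E,K)}(M_K)$ contains the preimage of $\supp_{H^*(E,k)}(M)$ and hence contains the prime $\fq$ corresponding to $K\lambda$; and then the restriction map $H^*(E,K)\to H^*(K\langle1+X_\lambda\rangle)$, under which $\fq$ lies over the unique non-maximal homogeneous prime of $H^*(\bbZ/p,K)$ — which is all of $\supp\bigl(\StMod(K[t]/(t^p))\bigr)$. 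Combining these two, $\supp\bigl(\alpha_\lambda^*(M_K)\bigr)\ne\varnothing$, i.e.\ $\alpha_\lambda^*(M_K)$ is not projective, as wanted.

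The main obstacle is precisely this last step: one must know that the support of Section~\ref{se:local} is compatible with base change along field extensions (a faithful-flatness argument), and, more seriously, with restriction along a rational shifted subgroup — concretely, that if the line $K\lambda$ lies in the support of $M_K$ then $\alpha_\lambda^*(M_K)$ is non-projective. For finitely generated modules over an algebraically closed field this is essentially the Avrunin--Scott identification of support with the rank variety (Theorem~\ref{th:AS}) together with Dade's lemma; upgrading it to arbitrary modules — and bookkeeping the Frobenius twist so that $\fp$ genuinely becomes rational over a field one can name — is the real content of the theorem. An alternative route, closer to the original argument of Benson, Carlson and Rickard, sidesteps the general support machinery by inducting on $r$ and treating the ``generic line'' $[1:y_2:\cdots:y_r]$ over the rational function field $k(y_2,\dots,y_r)$ separately from the lines rational over $k$; the point there — and the reason the statement must quantify over all extension fields $K$ — is that a non-closed point of the variety is detected only after one passes to its residue field.
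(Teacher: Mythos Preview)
The paper does not give its own proof of this statement: it is quoted from Benson, Carlson and Rickard~\cite{Benson/Carlson/Rickard:1996a} as a known result, so there is nothing to compare against directly. I will therefore comment on the proposal on its own terms.

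Your forward implication is fine. For the converse, the strategy via the support theory of Section~\ref{se:local} has a genuine circularity, which you half-acknowledge in your final paragraph but perhaps underestimate. The detection statement $\supp_R(M)=\varnothing\Leftrightarrow M=0$ is indeed available unconditionally, so picking a prime $\fp\in\supp_E(M)$ is legitimate. The difficulty is entirely in your last step: deducing from $\fp\in\supp_E(M)$ (cohomological support in the sense of \cite{Benson/Iyengar/Krause:2009a}) that $\alpha_\lambda^*(M_K)$ is non-projective when $\lambda$ is generic for $\fp$. That implication is precisely the passage from cohomological support to rank-variety support for \emph{arbitrary} modules, and historically it was established \emph{using} Theorem~\ref{th:BCR}, not the other way around. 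Concretely, neither ``$\supp$ commutes with base change $k\to K$'' nor ``$\supp$ is compatible with restriction along $\alpha_\lambda$'' is available for infinite-dimensional $M$ without already knowing something equivalent to the theorem; the functors $\gam_\fp$ are built from infinite (co)limits of Koszul objects, and their behaviour under $\alpha_\lambda^*$ is not formal. Invoking Avrunin--Scott only gives you the finite-dimensional case, which is Dade's theorem and not what is at stake.

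The ``alternative route'' you sketch at the end --- induction on $r$, treating the generic line over the rational function field $k(y_2,\dots,y_r)$ --- is not an alternative but the actual substance of the proof in~\cite{Benson/Carlson/Rickard:1996a}. The key device there is an explicit filtration of the trivial $kE$-module by modules induced from cyclic shifted subgroups (the ``generic'' one and the $k$-rational ones), which reduces projectivity of $M$ to projectivity of the restrictions; this argument is elementary in the sense that it does not presuppose any support machinery. If you want a self-contained proof, that is the route to write out.
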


For  $0\ne\lambda\in\bbA^r(K)$ let $\fp\subseteq k[Y_1,\dots,Y_r]$ be the ideal consisting of  homogeneous polynomials that vanish at $\lambda$. This is a prime ideal and $\lambda$ is a \emph{generic} point for $\fp$. Each $\fp$ in $\Proj k[Y_1,\dots,Y_r]$ occurs this way. If $\lambda\in\bbA^r(K)$ and  $\lambda'\in\bbA^r(K')$ are generic points for the same prime ideal, then $\alpha^*_{\lambda}(M_K)$ is projective if and only if $\alpha^*_{\lambda'}(M_{K'})$ is projective.  So instead of a rank variety, we assign to $M$ a subset of  $\Proj k[Y_1,\dots,Y_r]$. 

\begin{definition}
\label{def:mcV}
Set $\mcV_E^r(M)$ to be the set of $\fp\in\Proj k[Y_1,\dots,Y_r]$ such that if $\lambda\in \bbA^r(K)$ is generic for $\fp$ then $\alpha^*_{\lambda}(M_K)$ is not projective.
\end{definition}

Historically the rank variety was defined as a subset of $\bbA^r(k)$,  but we switch to considering projective varieties since all our constructions are  ``invariant" under scalar multiplication.  It follows from the main theorems of~\cite{Benson/Carlson/Rickard:1996a}  that the subset $\supp_E(M)\subseteq\Proj H^*(E,k)$  corresponds to $\mcV_E^r(M)$ for \emph{any} $kE$-module $M$, once the  appropriate identification, involving Frobenius for $p>2$,  of $\Proj H^*(E,k)$ and  $\bbP^{r-1}$ is made.  

We shall see in Section~\ref{se:pi} that the appropriate  generalisation of these concepts to finite group schemes leads to the theory of $\pi$-points.

\section{Finite group schemes}
In this section, we introduce finite group schemes, following the approach given in~\cite[Chapter 1]{Jantzen:2003a}. Throughout $k$ will be a field. 

An \emph{affine scheme} over $k$ is a representable functor from the category $\CAlg(k)$ of commutative algebras over $k$ to sets. 
The representing object of an affine scheme $S$, denoted $k[S]$, is called its \emph{coordinate ring}. Thus $S \colon \CAlg(k)\to \Set$ takes the form $\Hom_{\CAlg(k)}(k[S],-)$.

An \emph{affine group scheme} is a functor $G\colon \CAlg(k) \to \Grp$ whose composite with the forgetful functor $\Grp\to\Set$ is representable. By Yoneda's lemma, the natural transformations given by the group operations give rise to a  structure on $k[G]$ of commutative Hopf algebra. This gives a contravariant equivalence of categories from affine group schemes to commutative Hopf
algebras, sending $G$ to $k[G]$.

A \emph{finite group scheme} is an affine group scheme $G$ with the property that the coordinate ring $k[G]$ is finite dimensional over $k$. In this case, we may dualise to get the \emph{group algebra} $kG \colonequals \Hom_k(k[G],k)$, which has the structure of cocommutative Hopf algebra. This gives a covariant equivalence of categories from finite group schemes to finite dimensional cocommutative Hopf algebras, sending $G$ to $kG$.

Finite groups are examples of finite group schemes, but there are many more, including $p$-restricted Lie algebras, and Frobenius kernels of affine group schemes. See, for example, ~\cite[\S1]{Benson/Iyengar/Krause/Pevtsova:2017a} for examples and an explanation of how finite groups fit into the context. 

Friedlander and Suslin~\cite{Friedlander/Suslin:1997a} proved that for any finite group scheme $G$, the $k$-algebra $H^*(G,k)$ is Noetherian; moreover, the $H^*(G,k)$-module $H^*(G,M)$ is finitely generated for any finite dimensional $kG$-module $M$. This opened the door for the development  of support theories for finite group schemes. Another landmark development in this area was the theory of 
$\pi$-points.


\section{\texorpdfstring{The theory of $\pi$-points}{The theory of Ï-points}}\label{se:pi}

The theory of $\pi$-points for finite group schemes was initiated by Friedlander and Pevtsova~\cite{Friedlander/Pevtsova:2005a,Friedlander/Pevtsova:2007a}, and generalises rank varieties discussed in Section~\ref{se:rank}. This approach does not rely on the 
choice of generators needed to define cyclic shifted subgroups for elementary abelian $p$-groups which makes it applicable in a much 
greater generality. 

Let $G$ be a finite group scheme over a field $k$ of positive characteristic $p$. A \emph{$\pi$-point} $\alpha$ of $G$ is 
given as follows. We choose an extension field $K$ of $k$, a unipotent abelian subgroup scheme $E$ of $G_K$, and a flat map
\[ 
\alpha\colon K[t]/(t^p) \to KE \subseteq KG_K\,. 
\] 
Note that $E$ does not have to come from a subgroup scheme of $G$ by extension of scalars. The flatness condition is equivalent to the statement that the image of $t$ is in $J(KE)$ but not in $J^2(KE)$. 

Given such an $\alpha$ consider the composite
\[ 
H^*(G,k) \subseteq K\otimes_k H^*(G,k)\cong H^*(G_K,K)\cong 
\Ext^*_{KG_K}(K,K) \xrightarrow{\alpha^*} \Ext^*_{K[t]/(t^p)}(K,K)\,. 
\]
The ring $\Ext^*_{K[t]/(t^p)}(K,K)$ is isomorphic to $K[v]$ with $|v|=1$ if $p=2$, and to $K[u,v]/(u^2)$ with $|u|=1$ and $|v|=2$ if $p\ne 2$. The nil radical is zero in the first case, and the ideal $(u)$  in the second case. We define $\fp(\alpha)$ to be the inverse
image in $H^*(G,k)$ of the nil radical of $\Ext^*_{K[t]/(t^p)}(K,K)$ under the above map. This is a homogeneous prime ideal in $H^*(G,k)$.
The following theorem is due to Friedlander and Pevtsova~\cite{Friedlander/Pevtsova:2007a}.

\begin{theorem}
\label{th:pi-equiv}
If $\alpha\colon K[t]/(t^p) \to KG_K$ and $\beta\colon L[t]/(t^p)\to LG_L$ are $\pi$-points, the following conditions are equivalent.
\begin{enumerate}[\quad\rm(i)]
\item 
$\fp(\alpha)=\fp(\beta)$;
\item 
$\alpha^*(M_K)$ is projective if and only if $\beta^*(M_L)$ is projective for a $kG$-module $M$; 
\item
$\alpha^*(M_K)$ is projective if and only if $\beta^*(M_L)$ is projective for any finite dimensional $kG$-module $M$.\qed
\end{enumerate}
\end{theorem}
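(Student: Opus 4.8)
The plan is to close the cycle (i)$\Rightarrow$(ii)$\Rightarrow$(iii)$\Rightarrow$(i); the implication (ii)$\Rightarrow$(iii) is trivial. For (iii)$\Rightarrow$(i) I would argue contrapositively using Carlson's $L_\zeta$-modules. Suppose $\fp(\alpha)\neq\fp(\beta)$. When $p$ is odd, every $\fp(\gamma)$ contains the entire odd-degree part of $H^*(G,k)$ — the odd part of $\Ext^*_{K[t]/(t^p)}(K,K)$ being nilpotent — so the two primes differ in some even degree; choose a homogeneous $\zeta$ of even degree $2d>0$ with $\zeta\in\fp(\beta)$ but $\zeta\notin\fp(\alpha)$ (replace a distinguishing element by its square if $p=2$). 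Represent $\zeta$ by a surjection $\Omega^{2d}k\to k$ in $\StMod(kG)$ — possible since $k$ is simple — and let $L_\zeta$ be its kernel, a finite-dimensional $kG$-module fitting in a triangle $L_\zeta\to\Omega^{2d}k\to k\to$. For a $\pi$-point $\gamma\colon K[t]/(t^p)\to KE\subseteq KG_K$, restriction along $\gamma$ is exact and carries projective resolutions to projective resolutions (since $KG_K$ is free over $KE$ and, by flatness of $\gamma$, $KE$ is free over $K[t]/(t^p)$); hence the restricted triangle reads $\gamma^*((L_\zeta)_K)\to\Omega^{2d}_{K[t]/(t^p)}K\to K\to$, its middle map being the restricted class $\gamma^*(\zeta)\in\Ext^{2d}_{K[t]/(t^p)}(K,K)$. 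This class is nonzero exactly when $\zeta\notin\fp(\gamma)$, and then — being a unit times a power of the periodicity generator — it is invertible in the graded endomorphism ring of $K$ in $\StMod(K[t]/(t^p))$, so $\gamma^*((L_\zeta)_K)$ is zero in $\StMod$, i.e.\ projective; whereas if $\gamma^*(\zeta)=0$ the triangle splits and $\gamma^*((L_\zeta)_K)$ has the nonzero object $\Omega K$ as a stable direct summand, hence is not projective. Taking $\gamma=\alpha$ and $\gamma=\beta$ shows that $L_\zeta$ violates~(iii).

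The substance is (i)$\Rightarrow$(ii). Since a module over $K[t]/(t^p)$ is projective iff it is flat, and flatness is preserved and reflected under faithfully flat base change, one may first enlarge $K$ and $L$ to a common algebraically closed field $\Omega$; it then suffices to compare $\pi$-points over $\Omega$, for arbitrary $kG$-modules. The plan is to reduce each $\pi$-point to a one-parameter subgroup. Using the structure theory of unipotent abelian finite group schemes (through which $\pi$-points factor by definition) together with a Quillen/Chouinard-style detection on quasi-elementary subgroup schemes to deal with the component group of $G$ — the analogue of the reduction to elementary abelian subgroups in Section~\ref{se:rank} — one shows that, after a further scalar extension and up to the relation~(iii), $\alpha$ has the standard form $\epsilon_\Omega\circ\nu$, where $\nu\colon\bbG_{a(r),\Omega}\to G_\Omega$ is a one-parameter subgroup and $\epsilon\colon\Omega[t]/(t^p)\to\Omega\bbG_{a(r)}$ is the canonical $\pi$-point of $\bbG_{a(r)}$; that every $\pi$-point with a given associated prime is equivalent to such a standard one is itself part of the Suslin–Friedlander–Bendel circle of ideas. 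Granting this, their theory provides an identification of $\Spec k[V_r(G)]$, where $V_r(G)=\Hom(\bbG_{a(r)},G)$ is the scheme of one-parameter subgroups, with $\Spec H^{\mathrm{ev}}(G,k)$ (up to a Frobenius twist) under which $\fp(\alpha)$ corresponds to the point underlying $\nu$; and for any $M$ the module $\alpha^*(M_\Omega)=(\epsilon_\Omega\circ\nu)^*(M_\Omega)$ is projective precisely when $\nu$ avoids the closed rank variety $V_r(G)_M\subseteq V_r(G)$, which is in turn identified with the cohomological support $\supp_{H^*(G,k)}(M)$ by means of the Friedlander–Suslin finite generation theorem and the extension of the rank-variety comparison to infinite-dimensional modules in the spirit of~\cite{Benson/Carlson/Rickard:1996a}. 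Hence projectivity of $\alpha^*(M_\Omega)$ depends only on $\fp(\alpha)$; running the argument for $\beta$ and using $\fp(\alpha)=\fp(\beta)$ gives~(ii).

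The hard part will be this reduction to one-parameter subgroups: proving that every $\pi$-point is, after base change, equivalent to one of the form $\epsilon\circ\nu$, and that the one-parameter-subgroup support of a module agrees with its cohomological support for all modules. Both rest on the geometry of the scheme $V_r(G)$ and on the finite generation of cohomology, and both must be established separately for the infinitesimal part of $G$, for its component group, and for the unipotent abelian subgroup schemes through which $\pi$-points factor, before the general case can be assembled.
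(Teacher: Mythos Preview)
The paper does not prove this theorem; the \qed\ after the statement signals that it is quoted from Friedlander--Pevtsova~\cite{Friedlander/Pevtsova:2007a}. Your (iii)$\Rightarrow$(i) via $L_\zeta$ is exactly the argument used there, and it is correct as written.

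Your (i)$\Rightarrow$(ii), however, has a circularity. You reduce an arbitrary $\pi$-point $\alpha$ to a standard one $\epsilon_\Omega\circ\nu$ \emph{up to the relation~(iii)}, and then write ``for any $M$ the module $\alpha^*(M_\Omega)=(\epsilon_\Omega\circ\nu)^*(M_\Omega)$ is projective precisely when\ldots''. But (iii)-equivalence only tells you that $\alpha^*(M_\Omega)$ and $(\epsilon_\Omega\circ\nu)^*(M_\Omega)$ have the same projectivity for \emph{finite-dimensional} $M$; for arbitrary $M$ this is exactly the content of~(ii), which is what you are trying to prove. So the step where you pass from $\alpha$ to its standard form, for an infinite-dimensional $M$, is unjustified.

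What Friedlander--Pevtsova actually supply here is a separate \emph{straightening} lemma: if $\alpha,\alpha'\colon K[t]/(t^p)\to KE$ are $\pi$-points through the same unipotent abelian $E$ and $\alpha(t)\equiv\alpha'(t)\pmod{J^2(KE)}$, then $\alpha^*(N)$ is projective iff $\alpha'^*(N)$ is projective for \emph{every} $KE$-module $N$, finite-dimensional or not. This is proved directly, by an explicit algebraic argument on $KE$ (filtering by powers of the radical and using that projectivity over $K[t]/(t^p)$ is a rank condition stable under nilpotent perturbation), not by invoking any part of the cycle (i)--(iii). Only once this is in hand can one replace $\alpha$ by the standard $\pi$-point with the same linear part and then appeal to the one-parameter subgroup picture. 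You have correctly identified the ingredients surrounding this step, but the step itself --- that straightening works module-by-module for all modules --- is the missing idea, and without it the argument does not close.
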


We therefore put an equivalence relation on the set of $\pi$-points of $G$, where $\alpha\sim\beta$ if and only if the equivalent
conditions of the theorem hold. The map sending the equivalence class of a $\pi$-point $\alpha$ to the prime $\fp(\alpha)$ gives a bijection 
between the equivalence classes of $\pi$-points in $G$ and the set  $\Proj H^*(G,k)$. 

With this equivalence relation, every $\pi$-point is equivalent to one that factors through a subgroup scheme which is not only abelian
unipotent, but elementary. Let $\bbG_{a}$ be the additive group scheme and for each integer $r\ge 0$, let $\bbG_{a(r)}$ denote its $r$th
Frobenius kernel; see~\cite[Chapter~9]{Jantzen:2003a}.

\begin{definition} 
\label{de:elementary}
A finite group scheme is \emph{elementary} if it is isomorphic to the group scheme $\bbG_{a(r)} \times (\bbZ/p)^s$ with $r,s\ge 0$.
\end{definition}

These group schemes are called ``quasi-elementary"  by Bendel~\cite{Bendel:2001a}. For a finite group scheme  cohomology is detected, modulo nilpotents, on elementary subgroup schemes over extension fields. This explains their central role  in this theory.

\begin{definition}
Let $G$ be a finite group scheme over a field $k$. The \emph{$\pi$-support}, denoted $\pisupp_G(M)$, of a $kG$-module $M$ is the subset of $\Proj H^*(G,k)$ consisting of  primes $\fp(\alpha)$, for $\alpha\colon K[t]/(t^p)\to KG_K$ a $\pi$-point such that $\alpha^*(M_K)$ is not projective.
\end{definition}

The $\pi$-support is a ``generator-invariant" generalisation of Carlson's rank variety for any finite group scheme. 
On the other hand, exactly as for finite groups, one has a canonical
action of $H^*(G,k)$ on $\StMod(kG)$ and hence a notion of support for
$kG$-modules; see Section~\ref{se:local}, especially
\ref{eg:StModkG}. The following theorem from
\cite{Friedlander/Pevtsova:2007a,Benson/Iyengar/Krause/Pevtsova:2018a}
reconciles these two notions.  In doing so, it puts the results for
elementary abelian $p$-groups~\cite{Avrunin/Scott:1982a}, finite
groups~\cite{Benson/Carlson/Rickard:1996a}, restricted Lie algebras~\cite{Friedlander/Parshall:1986a}, and infinitesimal group schemes~\cite{Bendel/Friedlander/Suslin:1997b, Pevtsova:2002a} into a uniform statement.

\begin{theorem}
 One has $\supp_G(M)=\pisupp_G(M)$ for any $kG$-module $M$.\qed
\end{theorem}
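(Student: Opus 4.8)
The plan is to establish the two inclusions separately, reducing in each case to an elementary group scheme over an extension field, where both notions of support become explicit and can be matched directly, in the spirit of Section~\ref{se:rank}. For the formal reductions: both $\supp_G$ and $\pisupp_G$ detect the zero object --- for $\supp_G$ this is part of Definition~\ref{def:biksupp}, and for $\pisupp_G$ it is the detection of projectivity on $\pi$-points, valid for every finite group scheme --- both take coproducts to unions, and both convert exact triangles into the expected containments. In view of the bijection between equivalence classes of $\pi$-points and $\Proj H^{*}(G,k)$ and of Theorem~\ref{th:pi-equiv}, it suffices to show, for each $\fp\in\Proj H^{*}(G,k)$ and each $M$, that $\gam_\fp M=0$ if and only if $\alpha^{*}(M_K)$ is projective for some, equivalently every, $\pi$-point $\alpha$ with $\fp(\alpha)=\fp$.

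For the inclusion $\pisupp_G(M)\subseteq\supp_G(M)$ I would argue as follows. A $\pi$-point $\alpha\colon K[t]/(t^{p})\to KG_K$ induces, after base change to $K$, an exact coproduct-preserving restriction functor $\alpha^{*}\colon\StMod(kG)\to\StMod(K[t]/(t^{p}))$ that is linear over the ring homomorphism $H^{*}(G,k)\to\Ext^{*}_{K[t]/(t^{p})}(K,K)$ used to define $\fp(\alpha)$. Since $\Proj\Ext^{*}_{K[t]/(t^{p})}(K,K)$ is a single point, $\StMod(K[t]/(t^{p}))$ carries, apart from the trivial ones, a single local cohomology functor $\gam$, with $\gam X\ne 0$ exactly when $X$ is non-projective. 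The crux is then a compatibility lemma: restriction along the flat map $\alpha$ sends $\gam_\fp M$ to a non-zero object of $\StMod(K[t]/(t^{p}))$ precisely when $\alpha^{*}(M_K)$ is non-zero there; this rests on flatness of $\alpha$ and on the induced map of projective spectra carrying the point of $K[t]/(t^{p})$ to $\fp$. Hence non-projectivity of $\alpha^{*}(M_K)$ forces $\gam_\fp M\ne 0$.

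For the reverse inclusion $\supp_G(M)\subseteq\pisupp_G(M)$, which is the harder direction, I would first reduce to elementary group schemes. On the $\pi$-support side one uses that every $\pi$-point is equivalent to one factoring through an elementary subgroup scheme $\mcE\subseteq G_K$ (recalled above); on the cohomological side one needs the parallel Quillen-type stratification, asserting that $\supp_G(M)$ is the union of the images of $\supp_{\mcE}(\res_\mcE M_K)$ under the maps $\Proj H^{*}(\mcE,K)\to\Proj H^{*}(G,k)$, which follows from detection of cohomology modulo nilpotents on elementary subgroup schemes together with the local-global principle for $\supp_G$ (available since $H^{*}(G,k)$ is Noetherian of finite Krull dimension). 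This brings the claim down to $G=\bbG_{a(r)}\times(\bbZ/p)^{s}$, where $H^{*}(G,k)$ is (a Frobenius twist of) a polynomial-times-exterior algebra and the $\pi$-points are the explicit maps $\alpha_\lambda$ attached to lines. For an arbitrary module $M$, both $\supp_G(M)$ and $\pisupp_G(M)$ then reduce to non-projectivity of the $\alpha_\lambda^{*}(M_K)$, and one uses the computation of $\gam_\fp$ via Koszul/Carlson objects, together with the known stratification of $\StMod(kG)$ for such $G$ (via the Bernstein--Gelfand--Gelfand correspondence), to identify the two subsets of $\Proj H^{*}(G,k)$, extending Theorems~\ref{th:AS} and~\ref{th:BCR}.

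The step I expect to be the main obstacle is this last inclusion for infinitely generated modules: matching $\gam_\fp M$ with non-projectivity of a $\pi$-point requires enough control over the structure of $\StMod(kG)$ for elementary $G$ --- effectively its stratification --- and over the behaviour of $\gam_\fp$ under filtered colimits of finite-dimensional modules, so that the identity need not be verified on compact objects alone. A persistent secondary nuisance is the bookkeeping of the Frobenius twist for odd $p$, which enters the identification of $\Proj H^{*}(\mcE,K)$ with projective space and hence the matching of $\fp(\alpha_\lambda)$ with $\lambda$.
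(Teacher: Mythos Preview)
Your outline is workable in principle, but it misses the key simplification that the paper's approach (following \cite[Theorem~6.1]{Benson/Iyengar/Krause/Pevtsova:2018a}, and spelled out for the super case around Theorem~\ref{th:supp=pisupp}) exploits: the tensor product formula for $\pi$-support.  Once one knows that $\pisupp_G(M\otimes_k N)=\pisupp_G(M)\cap\pisupp_G(N)$, that $\pisupp_G(\gam_\fp k)=\{\fp\}$ (Proposition~\ref{pr:suppGamma} in the super setting), and that $\pi$-support detects projectivity, the equality is immediate:
\[
\fp\in\supp_G(M)\ \Longleftrightarrow\ \gam_\fp k\otimes_k M\not\cong 0\ \Longleftrightarrow\ \pisupp_G(\gam_\fp k)\cap\pisupp_G(M)\ne\varnothing\ \Longleftrightarrow\ \fp\in\pisupp_G(M).
\]
No separate treatment of the two inclusions, no reduction to elementary subgroup schemes, and no appeal to stratification are needed.

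Your route via a Quillen-type decomposition of $\supp_G(M)$ and then BGG for the elementary pieces can be made to work, since elementary group schemes have the same group algebra as elementary abelian $p$-groups and hence the same stable category, so the BGG-based stratification from \cite{Benson/Iyengar/Krause:2011b} transfers. But note two costs. First, the statement that $\supp_G(M)$ is covered by the images of $\supp_{\mcE}(\res_\mcE M_K)$ is not a consequence of Quillen stratification for cohomology alone; it requires control of $\gam_\fp$ under restriction and base change, which is itself non-trivial machinery. Second, you are invoking stratification of $\StMod(k\mcE)$ as an \emph{input}, whereas in the paper's logical order the equality $\supp=\pisupp$ is a \emph{tool} used to deduce stratification (see Theorem~\ref{th:minimality}). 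So your argument reverses the flow and leans on a deeper fact than necessary. The paper's approach buys you a short, self-contained proof that feeds cleanly into the stratification theorem; your approach would work but is circuitous and, if one is not careful about which version of stratification is being cited, risks circularity.
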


\section{Finite supergroup schemes}
\label{se:schemes}

The definition of affine superschemes is parallel to that of affine group schemes. It is obtained replacing the underlying category of vector spaces over $k$ by the category of super vector spaces. A \emph{super vector space} over $k$ is a $\bbZ/2$-graded vector space $V=V_0\oplus V_1$. The tensor product of two such is given by 
\[
(V\otimes W)_0=V_0\otimes W_0\,\oplus\, V_1\otimes W_1\quad\text{and}\quad  (V\otimes W)_1=V_0\otimes W_1\,\oplus\, V_1\otimes W_0\,.
\]
This tensor product has a symmetric braiding $V\otimes W \cong W\otimes V$ given by sending $v\otimes w$ to $(-1)^{|v||w|}w\otimes v$, where
$|v|$ denotes $0$ if $v\in V_0$ and $1$ if $v\in V_1$. Thus the category $\SVec(k)$ of super vector spaces is a symmetric monoidal
abelian category. A \emph{commutative superalgebra} over $k$ is a commutative algebra in this category. Thus it consists of an object $A$ together with a multiplication $A\times A \to A$ which is commutative with respect to the symmetric braiding,  associative, and unital. These form a category $\CSAlg(k)$.

An \emph{affine superscheme} over $k$ is a representable functor from commutative superalgebras over $k$ to sets. The representing object of
an affine superscheme $S$ is the coordinate ring $k[S]$.  An \emph{affine supergroup scheme} is a functor $G\colon \CSAlg(k) \to \Grp$ whose composite with the forgetful functor $\Grp\to\Set$ is representable. The representing object, the coordinate ring $k[G]$ of $G$, is a commutative Hopf superalgebra. This way, we obtain a contravariant equivalence of categories from affine supergroup schemes to commutative Hopf superalgebras. A Hopf superalgebra need not be a Hopf algebra, for the diagonal map need not be a map of  ungraded algebras.

A \emph{finite supergroup scheme} is an affine supergroup scheme $G$ with the property that the coordinate ring $k[G]$ is finite dimensional over $k$. In this case, we may dualise to get the \emph{group algebra} $kG = \Hom_k(k[G],k)$, which has the structure of cocommutative Hopf superalgebra. This way, we obtain a covariant equivalence of categories from finite supergroup schemes to finite dimensional cocommutative Hopf superalgebras.

Examples of finite supergroup schemes include finite groups, finite group schemes, exterior algebras, as well as Frobenius kernels of affine supergroup schemes such as the general linear ones $\GL(a|b)$ and the orthosymplectic ones $\OSp(a|2b)$. We write $\bbG_a^-$ for the finite supergroup scheme whose group algebra is an exterior algebra on one primitive element, in degree $1$. This is the simplest example of a finite supergroup scheme which is not a finite group scheme.

A finite supergroup scheme $G$ is \emph{unipotent} if the kernel of the augmentation map $kG \to k$ is equal to the nil radical. This is equivalent to $kG$ having exactly two simple modules, the trivial module in even degree and the trivial module in odd degree. For example, $\bbG_a^-$ is unipotent.

The module category for a finite supergroup scheme is an abelian category, with a parity change functor $\Pi$. The action of an element $a\in kG$ on $\Pi m \in \Pi M$ is given by $a.\Pi m = (-1)^{|a|}\Pi(am)\in \Pi M$. The stable module category $\StMod(kG)$ is a $\bbZ/2$-graded  triangulated category. So it comes with an internal shift $\Pi$ whose square is naturally isomorphic to the identity, and a cohomological shift $\Omega^{-1}$. The distinguished triangles
\[ 
M_1 \to M_2 \to M_3 \to \Omega^{-1}M_1 
\]
are those isomorphic to triangles coming from short exact sequences of $kG$-modules via a pushout diagram
\[ 
\xymatrix{0 \ar[r] & M_1 \ar[r]\ar@{=}[d] & M_2 \ar[r]\ar[d] & M_3 \ar[r]\ar[d] & 0 \\ 
0 \ar[r] & M_1 \ar[r] & I \ar[r] & \Omega^{-1}M_1 \ar[r] & 0} 
\]
where $I$ is an injective module into which $M_2$ embeds.

For a $\bbZ/2$-graded triangulated category $\sfT$, we need a slight modification to the definition of centre, to take account of the grading. We define $Z^{n,j}(\sfT)$ ($n\in\bbZ$, $j\in\bbZ/2$) to be the natural transformations $\eta$ from the identity functor to $\Sigma^n\Pi^j$ satisfying  $\eta\Sigma=(-1)^n\Sigma\eta$ and $\eta\Pi=(-1)^j\Pi\eta$. These form a $\bbZ\times \bbZ/2$-graded ring. It is graded commutative, in the sense that if $x\in Z^{m,i}(\sfT)$ and $y\in Z^{n,j}(\sfT)$ then $yx = (-1)^{mn}(-1)^{ij}xy$.   The cohomology ring $H^{*,*}(G,k)$ is also $\bbZ\times\bbZ/2$-graded
commutative in this sense. For example, $H^{*,*}(\bbG_a^-,k)$ is a polynomial ring on a single generator in
degree $(1,1)$.

There is a canonical action of $H^{*,*}(G,k)$ on $\StMod(kG)$, so it makes sense to try to stratify $\StMod(kG)$ as a $\bbZ/2$-graded  tensor triangulated category using this action.  The definition of localising subcategory needs to be modified to take account of the extra grading; we only consider localising
subcategories closed under the operation $\Pi$. 

\begin{definition}
\label{def:ProjH**,Hbullet}
We define $\Proj H^{*,*}(G,k)$ to consist of the prime ideals which are homogeneous
with respect to both gradings, with the Zariski topology,  and as usual we exclude the maximal
ideal of elements generated by homogeneous elements whose degree 
is not $(0,0)$. Since elements of degree (even,odd) or (odd,even)
square to zero, these elements are contained in every prime ideal.
Modulo these elements, the $\bbZ/2$-grading is just the mod two
reduction of the $\bbZ$-grading, and so we can ignore it. So we write
$H(G,k)$ for the singly graded ring whose degree $i$ component
is the degree $(i,0)$ or $(i,1)$ component of $H^{*,*}(G,k)$ according
as $i$ is even or odd. This is a graded ring which becomes 
strictly commutative if we reduce modulo nilpotents, and
at the level of topological spaces, $\Proj H(G,k)$ may be identified with
$\Proj H^{*,*}(G,k)$. Thus for example $H(\bbG_a^-,k)$ is a
polynomial ring on a generator in degree one.
\end{definition}

\begin{definition}
\label{def:supp} 
For a $kG$-module $M$, let $\supp_G(M)$ denote the support defined via the action of $H(G,k)$ on $\StMod(kG)$, as in Section~\ref{se:local}. 
\end{definition} 

\begin{remark} \label{rk:classical} 
By definition, $\supp_G(M)$ is a subset of $\Proj H(G,k)$. Recall that when $M$ is  finite-dimensional, $\supp_G(M)$ is a closed subset defined by  the annihilator of  $\Ext^{*,*}_{kG}(M,M)$ as a module over $H(G,k)$. Moreover, if $G$ is unipotent,  we can take the annihilator of  $H^{*,*}(G,M)$. 
\end{remark}

\section{Elementary  supergroup schemes}\label{se:elem}

The definition of elementary  supergroup schemes is more complicated than for finite group schemes, and we have so far only addressed the unipotent case. Drupieski and Kujawa~\cite{Drupieski/Kujawa:2019a, Drupieski/Kujawa:2019b, Drupieski/Kujawa:2019c}  suggest that similar definitions may suffice for arbitrary finite supergroup schemes.

In~\cite{Benson/Iyengar/Krause/Pevtsova:bikp5} we construct a family of finite connected unipotent  supergroup schemes $E_{m,n}^-$
with $m,n\ge 1$ related to the Witt vectors and declare a supergroup
scheme to be \emph{elementary} if it is isomorphic to a quotient of
some $E_{m,n}^-\times (\bbZ/p)^s$.  These are classified; see
\cite[Remark 8.14]{Benson/Iyengar/Krause/Pevtsova:bikp5} and compare with Definition~\ref{de:elementary}.
\begin{theorem}
\label{th:elemclass}
Each elementary  supergroup scheme is isomorphic to one of:
\begin{enumerate}[\quad\rm I.]
\item $\bbG_{a(n)} \times (\bbZ/p)^s$ with $n,s\ge 0$,
\item $\bbG_{a(n)} \times \bbG_a^- \times (\bbZ/p)^s$ with $n,s\ge 0$,
\item
\begin{enumerate}[\rm(i)]
\item[(i)]  $E_{m,n}^- \times (\bbZ/p)^s$ with $m\ge 2$, $n\ge 1$, $s\ge 0$,
\item[(ii)] $E_{m,n,\mu}^-\times (\bbZ/p)^s$ with $m,n\ge 1$, $0\ne \mu\in
  k^\times/(k^\times)^2$  and $s\ge 0$.
  \end{enumerate} 
\end{enumerate}
Here, $E_{m,n,\mu}^-$ is a quotient of $E_{m+1,n+1}^-$ by a subgroup
isomorphic to $\bbG_{a(1)}$, and only depends on the
image of $\mu$ in $k^\times/(k^\times)^2$.\qed
\end{theorem}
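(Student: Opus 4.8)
The plan is to classify in two stages: first split off the étale part and reduce to connected supergroup schemes, then classify connected quotients of $E_{m,n}^-$ by going through the sub-Hopf-superalgebras of its coordinate ring. One must also check that each family in the list really is a quotient of some $E_{m,n}^-\times(\bbZ/p)^s$, but this is immediate from the construction in \cite{Benson/Iyengar/Krause/Pevtsova:bikp5} once the structure of $E_{m,n}^-$ is recalled, since the required maps just truncate the two Witt/Frobenius towers.

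\emph{Reduction to the connected case.} Let $\mcE$ be elementary, presented as a quotient $q\colon E_{m,n}^-\times(\bbZ/p)^s\to\mcE$. Since the $E_{m,n}^-$ are connected and $(\bbZ/p)^s$ is étale, $(E_{m,n}^-\times(\bbZ/p)^s)^\circ=E_{m,n}^-$, and a chase with the connected--étale sequences gives $\mcE^\circ=q(E_{m,n}^-)$, while the composite $(\bbZ/p)^s\to\mcE\to\pi_0(\mcE)$ is surjective, so $\pi_0(\mcE)$ is a quotient of $(\bbZ/p)^s$ and hence a constant group scheme $(\bbZ/p)^{s'}$. Choosing a section $(\bbZ/p)^{s'}\to(\bbZ/p)^s$ of that surjection and composing with $q$ splits the connected--étale sequence of $\mcE$; as $\mcE$ is abelian (a quotient of the abelian $E_{m,n}^-\times(\bbZ/p)^s$) this exhibits $\mcE\cong\mcE^\circ\times(\bbZ/p)^{s'}$. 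So it suffices to show that a connected elementary supergroup scheme, i.e. a connected quotient of some $E_{m,n}^-$, is of type I, II, or III with $s=0$.

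\emph{Classifying connected quotients of $E_{m,n}^-$.} Dually, these correspond to sub-Hopf-superalgebras of the coordinate ring $k[E_{m,n}^-]$. Recalling from \cite{Benson/Iyengar/Krause/Pevtsova:bikp5} the explicit form of $E_{m,n}^-$ --- a truncated polynomial Hopf superalgebra built from an even Witt-type tower of length $m$ and a single odd generator $\sigma$, with the characteristic ``super'' relation linking $\sigma^2$ to the base of a length-$n$ even Frobenius tower --- one runs through the possible quotients. If the quotient kills the odd direction it lands in the purely even, finite-group-scheme world, where the connected unipotent ``one-parameter'' quotients are precisely the Frobenius kernels $\bbG_{a(n')}$ (type I). If the odd direction survives but the linking relation is severed, $\sigma$ becomes a free square-zero primitive and one gets $\bbG_{a(n')}\times\bbG_a^-$ (type II). Otherwise the link persists; truncating the towers yields $E_{m',n'}^-$, and the one further possibility --- dividing $E_{m+1,n+1}^-$ by a copy of $\bbG_{a(1)}$ embedded via a scalar $\mu\in k^\times$ into the top of its even tower --- yields $E_{m,n,\mu}^-$ (type III). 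One then verifies these families are pairwise non-isomorphic and exhaustive: types are separated by whether the odd part is nonzero and by the heights $m,n$ of the two towers, and since the only rescaling available to the odd generator is $\sigma\mapsto\nu\sigma$ with $\nu\in k^\times$, which changes $\sigma^2$ by $\nu^2$, the scalar $\mu$ transforms by $\mu\mapsto\nu^2\mu$; hence $\mu$ is a well-defined, complete invariant in $k^\times/(k^\times)^2$. The hypothesis $p\ge 3$ is used throughout this last part: for $p=2$ the odd generators have a different shape and the picture degenerates.

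\emph{Main obstacle.} The delicate step is the last one: the Witt-vector bookkeeping needed to enumerate \emph{all} sub-Hopf-superalgebras of $k[E_{m,n}^-]$ and to rule out quotients that mix the even and odd towers in an unforeseen way, together with the clean extraction of the $\mu$-invariant --- showing on the one hand that rescaling absorbs exactly the squares (so $\mu$ and $\nu^2\mu$ give isomorphic groups) and on the other that distinct square classes give genuinely non-isomorphic supergroup schemes. The latter requires producing an honest invariant separating $E_{m,n,\mu}^-$ and $E_{m,n,\mu'}^-$ for $\mu\not\equiv\mu'$, for instance via the Frobenius-semilinear form induced on the appropriate graded component of $H^{*,*}(-,k)$ or on the restricted-Lie-type data attached to these supergroup schemes.
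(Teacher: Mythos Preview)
The paper does not prove this theorem; it is stated with a \qed\ and attributed to \cite[Remark~8.14]{Benson/Iyengar/Krause/Pevtsova:bikp5}, so there is no in-paper argument to compare against.

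Your reduction to the connected case via the connected--\'etale sequence is correct and clean: the source is abelian with \'etale part the constant group $(\bbZ/p)^s$, so the splitting you construct really does give $\mcE\cong\mcE^\circ\times(\bbZ/p)^{s'}$. For the connected case, however, what you have written is a plan rather than a proof. The phrases ``one runs through the possible quotients'' and ``the one further possibility'' presuppose exactly the classification being established. The honest enumeration of Hopf superideals of $kE_{m,n}^-$ (equivalently, of normal sub-supergroup-schemes) is the entire content of the theorem in the connected case, and you explicitly flag it as the ``main obstacle'' without carrying it out. In particular, you have not shown that every connected quotient in which both the odd generator and the relation $\sigma^2=s_n^p$ survive must be one of the listed $E_{m',n'}^-$ or $E_{m,n,\mu}^-$; ruling out quotients that mix the Witt tower in other ways requires a genuine argument with the coproduct formulas $\Delta(s_i)=S_{i-1}(\dots)$, not an appeal to ``truncating towers''. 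Likewise, you assert that distinct square classes of $\mu$ give non-isomorphic supergroup schemes and gesture at a cohomological invariant, but you do not construct one.

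In short, the strategy is the natural one and plausibly matches what the cited reference does, but the proposal stops precisely where the real work begins.
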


\begin{definition}
\label{de:witt} 
The supergroup schemes of type III are said to be \emph{Witt elementary}. 
\end{definition} 

The role played by elementary supergroup schemes is explained by the following analogue from \cite{Benson/Iyengar/Krause/Pevtsova:bikp5} of the theorems of Quillen and Chouinard  for finite groups. 

\begin{theorem}
\label{th:super-elem-detect}
Let $G$ be a unipotent finite supergroup scheme over a field $k$ of characteristic $p\ge 3$. Then the following hold:
\begin{enumerate}[\quad\rm(i)]
\item 
An element $x\in H^{*,*}(G,k)$ is nilpotent if and only if for every extension field $K$ of $k$ and every elementary sub-supergroup scheme $E$  of $G_K$, the restriction of $x_K$, the image of $x$ in $H^{*,*}(G_K,K)$, to $H^{*,*}(E,K)$ is nilpotent.
\item 
A $kG$-module $M$ is projective if and only if for every extension field $K$ of $k$ and every elementary sub-supergroup scheme $E$ of $G_K$, the restriction of $M_K$ to $E$ is projective.\qed
\end{enumerate}
\end{theorem}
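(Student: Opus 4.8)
The plan is to establish part~(i) first and to deduce part~(ii) from it, following the template of the Quillen--Chouinard theorems for finite group schemes. In both parts the forward (``only if'') implications are formal: for~(i), restriction $H^{*,*}(G_K,K)\to H^{*,*}(E,K)$ is a homomorphism of rings and so sends nilpotents to nilpotents; for~(ii), if $M$ is projective then so is $M_K$, and $KG_K$ is free as a module over $KE$ (freeness of the group algebra of a finite supergroup scheme over that of a sub-supergroup scheme), so the restriction of $M_K$ to $E$ is projective. Thus in each part it remains to prove the ``if'' direction, which we argue contrapositively.

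For~(i), suppose $x\in H^{*,*}(G,k)$ is not nilpotent; we must produce an extension field $K$ and an elementary sub-supergroup scheme $E\le G_K$ such that the restriction of $x_K$ to $H^{*,*}(E,K)$ is not nilpotent. A d\'evissage along the connected-\'etale sequence $1\to G^0\to G\to\pi_0 G\to 1$ --- carried out just as in the proof of the Quillen stratification theorem for finite group schemes, via the Lyndon--Hochschild--Serre spectral sequence and the Noetherianity of $H^{*,*}(G,k)$ established by Drupieski --- reduces the problem to two cases: that of $G$ connected, and that of the component group $\pi_0 G$, which (over a suitable extension of $k$) is a constant finite $p$-group since $G$ is unipotent. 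In the second case Quillen's theorem detects non-nilpotence on an elementary abelian $p$-subgroup, and such a subgroup is an elementary supergroup scheme of type~I (with $n=0$) in Theorem~\ref{th:elemclass}. It remains to treat $G$ connected, hence infinitesimal. Here one embeds $G$ into a Frobenius kernel $\GL(a|b)_{(r)}$ by means of its left regular representation and invokes the computation, due to Drupieski and Kujawa and built on the theory of super polynomial functors, of $H^{*,*}(\GL(a|b)_{(r)},k)$ together with the attached scheme of ``super one-parameter subgroups'': modulo nilpotents, $H^{*,*}(G,k)$ is the coordinate ring of an affine scheme whose points are homomorphisms into $G$ from elementary supergroup schemes --- now including the odd type~II and the Witt-elementary type~III of Theorem~\ref{th:elemclass} --- the image of each such homomorphism being an elementary sub-supergroup scheme over an extension field. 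Picking a homomorphism that corresponds to a point at which $x$ does not vanish, and restricting $x_K$ along it, produces the required $E$.

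For~(ii), first let $M$ be finite-dimensional. By Remark~\ref{rk:classical}, $M$ is projective over $kG$ if and only if $\supp_G(M)=\varnothing$, where $\supp_G(M)$ is the closed subset of $\Proj H(G,k)$ cut out by $\Ann_{H(G,k)}\Ext^{*,*}_{kG}(M,M)$. One shows that this set is detected by restriction --- that is, $\supp_G(M)=\varnothing$ if and only if the restriction of $M_K$ to $E$ is projective for every extension field $K$ and every elementary $E\le G_K$ --- by running the strategy of~(i) with module coefficients: the d\'evissage, Quillen's theorem, and the super one-parameter subgroup analysis all have versions with coefficients in a finite-dimensional $kG$-module $W$ and its restrictions, and one applies these with $W=\Hom_k(M,M)$, noting that $H^{*,*}(G,W)=\Ext^{*,*}_{kG}(M,M)$. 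Finally, the extension from finite-dimensional modules to arbitrary $kG$-modules follows the pattern set by Benson--Carlson--Rickard (cf.\ Theorem~\ref{th:BCR}).

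The main obstacle is the connected case of~(i): realizing \emph{every} point of $\Proj H^{*,*}(G,k)$ by a homomorphism from an elementary supergroup scheme requires the full Drupieski--Kujawa machinery, and in particular forces one to confront the Witt-elementary supergroup schemes $E_{m,n}^-$ of Theorem~\ref{th:elemclass}, which have no counterpart in the purely even theory of finite group schemes; computing enough of their cohomology and verifying that the relevant universal classes restrict nontrivially to them is the technical heart of the matter. Everything else --- the d\'evissage, the appeal to Quillen's theorem, the passage from~(i) to~(ii), and the extension to infinite-dimensional modules --- follows patterns already established for finite group schemes. The hypothesis $p\ge 3$ is essential throughout, since the structure theory of unipotent finite supergroup schemes, and hence the classification underlying Theorem~\ref{th:elemclass}, degenerates at $p=2$.
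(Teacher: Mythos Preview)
The paper does not prove this theorem here; it is imported from \cite{Benson/Iyengar/Krause/Pevtsova:bikp5}, as the \qed\ after the statement indicates. Your outline is broadly faithful to the architecture of that reference: the d\'evissage along the connected--\'etale sequence, the appeal to Quillen's theorem for the \'etale part, and the embedding of the connected part into $\GL(a|b)_{(r)}$ followed by the Drupieski--Kujawa analysis are indeed the main ingredients, and you have correctly located the technical heart in showing that the Witt-elementary supergroup schemes really are needed and really do suffice. One caution on part~(i): what you need in the connected case is that the map from $H^{*,*}(G,k)$ to the coordinate ring of the scheme of super one-parameter subgroups has \emph{nilpotent kernel}, not merely that its image contains high $p$-th powers. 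The latter gives injectivity of $\Psi_G$ on spectra (this is Theorem~\ref{th:F-iso} of the present paper), but it is the former that yields detection of nilpotence, and the two halves are logically independent.

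The more serious gap is in part~(ii) for infinite-dimensional modules. The sentence ``the extension from finite-dimensional modules to arbitrary $kG$-modules follows the pattern set by Benson--Carlson--Rickard'' does not describe a pattern one can transplant. The BCR argument exploits the explicit cyclic-shifted-subgroup presentation of $kE$ and a generic-point trick particular to elementary abelian $p$-groups; for a general unipotent $G$ no such description is available at this stage---indeed, Theorem~\ref{th:super-elem-detect} is an \emph{input} to the $\pi$-point machinery of Sections~\ref{se:homeo}--\ref{se:localising}, not a consequence of it, so you cannot lean on rank varieties or $\pi$-support here. In \cite{Benson/Iyengar/Krause/Pevtsova:bikp5} the infinite-dimensional case of~(ii) is not obtained from the finite-dimensional one by a limit or filtration argument; it is proved by a separate structural induction on $G$, with an independent projectivity-detection lemma for arbitrary modules at the base of the induction. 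Your sketch gives no indication of how any of this goes, so as written the proposal does not cover arbitrary $M$.
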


Fortunately, if we ignore the comultiplicative structure of these elementary supergroup schemes, the $\bbZ/2$-graded  algebra structure is  easy to
describe. The list corresponds to the one from Theorem~\ref{th:elemclass}.

\begin{proposition}
\label{pr:alg}
If $E$ is an elementary supergroup scheme over $k$ then the algebra $kE$ is isomorphic to one of the following:
\begin{enumerate}[\quad\rm(i)]
\item a tensor product of copies of $k[s]/(s^p)$,
\item a tensor product of copies of  $k[s]/(s^p)$ and  one copy of  $k[\sigma]/(\sigma^2)$,
\item a tensor product of copies of  $k[s]/(s^p)$  and one copy of $k[s,\sigma]/(s^{p^m},s^p-\sigma^2)$, where $m\ge 1$,
\end{enumerate}
with $|s|$ even and $|\sigma|$ odd. 
\end{proposition}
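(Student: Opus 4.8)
The plan is to prove \intref{Proposition}{pr:alg} by going through the classification of elementary supergroup schemes provided in \intref{Theorem}{th:elemclass} and computing the underlying $\bbZ/2$-graded algebra of each group algebra on the list. For the factors $(\bbZ/p)^s$ and $\bbG_{a(n)}$ the answer is classical: $k[(\bbZ/p)^s]$ is a tensor product of $s$ copies of $k[s]/(s^p)$ (with $s$ in even degree), and $k\bbG_{a(n)}$ is likewise $k[s]/(s^p)^{\otimes n}$ since the group algebra of the $n$th Frobenius kernel of $\bbG_a$ is $k[u_0,\dots,u_{n-1}]/(u_0^p,\dots,u_{n-1}^p)$; in both cases the generators lie in even degree because these are honest (ungraded) finite group schemes. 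The new ingredient is the odd part: $k\bbG_a^-$ is by definition an exterior algebra on one generator $\sigma$ in odd degree, i.e. $k[\sigma]/(\sigma^2)$. These observations already dispose of types I and II, giving cases (i) and (ii) of the proposition after absorbing the $\bbG_{a(n)}$ and $(\bbZ/p)^s$ factors into the tensor product of $k[s]/(s^p)$'s.

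The substantive case is type III. Here I would compute $kE_{m,n}^-$ directly from the construction of $E_{m,n}^-$ in \cite{Benson/Iyengar/Krause/Pevtsova:bikp5}, where it is built from (truncated) Witt vectors. The key structural fact to extract is that, as an algebra, $kE_{m,n}^-$ is generated by an even element $s$ and an odd element $\sigma$ subject to $s^{p^m}=0$ and $\sigma^2 = s^p$, tensored with further copies of $k[s]/(s^p)$ coming from the remaining Witt-vector coordinates of length $n$ (and the $(\bbZ/p)^s$ factor contributes more such copies). For the quotient group scheme $E_{m,n,\mu}^-$, which by \intref{Theorem}{th:elemclass} is $E_{m+1,n+1}^-$ modulo a copy of $\bbG_{a(1)}$, I would check that passing to the quotient Hopf superalgebra kills one of the even truncated-polynomial generators and rescales the relation $\sigma^2 = s^p$ by the unit $\mu$; since $\mu$ only matters up to squares and we are only asked about the algebra isomorphism type (not the Hopf structure), we may absorb $\mu$ into $\sigma$ and recover the same presentation $k[s,\sigma]/(s^{p^m}, s^p-\sigma^2)$. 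Throughout, the decompositions are taken over $k$ and no field extension is needed because these presentations are already defined over $k$.

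The main obstacle is bookkeeping: one must unwind the Witt-vector construction of $E_{m,n}^-$ and of the quotient $E_{m,n,\mu}^-$ precisely enough to see that the comultiplication-forgetting algebra really does collapse to the short list above, and in particular that no cross-terms survive between the distinguished pair $(s,\sigma)$ and the extra truncated-polynomial generators — that is, that the tensor decomposition is genuine. This is a routine but slightly delicate computation with the defining relations in \cite{Benson/Iyengar/Krause/Pevtsova:bikp5}; once the relations are written down, identifying the isomorphism type is immediate because a finite-dimensional local graded algebra generated by one even and one odd element with the stated relations is rigidly determined by them. I would organise the argument as: (1) recall the defining algebra relations for $kE_{m,n}^-$; (2) exhibit the explicit tensor factorisation, isolating the $k[s,\sigma]/(s^{p^m},s^p-\sigma^2)$ factor; (3) handle the quotient $E_{m,n,\mu}^-$ by the same method, noting the irrelevance of $\mu$ modulo squares at the algebra level; (4) reassemble with the $\bbG_{a(n)}\times(\bbZ/p)^s$ factors to land in case (iii).
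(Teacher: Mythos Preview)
Your approach is correct and matches the paper's: the paper does not give a formal proof of this proposition but simply records, immediately afterward, the explicit presentation
\[
kE_{m,n}^- \cong \frac{k[s_1,\dots,s_n,\sigma]}{(s_1^p,\dots,s_{n-1}^p,s_n^{p^m},s_n^p-\sigma^2)},
\]
which, together with the elementary identifications for $\bbG_{a(n)}$, $(\bbZ/p)^s$, and $\bbG_a^-$, yields the three cases by reading off the classification in \intref{Theorem}{th:elemclass}. Your plan of running through the list and computing each group algebra is exactly this.

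One small correction to your expectation for type III(ii): in the presentation of $kE_{m,n,\mu}^-$ (compare Remark~\ref{rk:elementaries}, with $f(t)=t^{p^m}$), the parameter $\mu$ does not rescale the relation $\sigma^2=s_n^p$. Instead it enters via a relation of the form $s_n^{p^m}+\mu s_1=0$ linking $s_1$ to a power of $s_n$. Since $\mu\ne 0$, you can solve $s_1=-\mu^{-1}s_n^{p^m}$ and eliminate $s_1$; the remaining relation $s_1^p=0$ then forces $s_n^{p^{m+1}}=0$, and the algebra collapses to
\[
\frac{k[s_2,\dots,s_n,\sigma]}{(s_2^p,\dots,s_{n-1}^p,\,s_n^{p^{m+1}},\,s_n^p-\sigma^2)},
\]
which is again of type (iii) with no $\mu$ visible at the algebra level. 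So your conclusion is right, but the mechanism is elimination of a generator rather than absorbing $\mu$ into $\sigma$; in particular no square-root of $\mu$ is needed and the argument works over $k$ as stated.
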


In particular, we have 
\begin{equation}
\label{eq:KEmn-} 
kE_{m,n}^- \cong \frac{k[s_1,\dots,s_n,\sigma]}{(s_1^p,\dots,s_{n-1}^p,s_n^{p^m},s^p-\sigma^2)}
\end{equation}
with coproduct defined by
\begin{align*} 
\Delta(s_i)&=S_{i-1}(s_1\otimes 1,\dots, s_i\otimes 1,
1\otimes s_1,\dots,1\otimes s_i) & (i\ge 1) \\
\Delta(\sigma)&=\sigma\otimes 1 + 1\otimes \sigma,
\end{align*}
where the $S_i$ are the maps coming from the comultiplication  in $\bbG_a$. 

Although an elementary supergroup scheme is not  necessarily commutative as a $\bbZ/2$-graded algebra, because of the factors of type \ref{pr:alg}\,(iii),
it is nonetheless commutative in the ungraded sense.

\begin{remark}
\label{rk:elementaries}
For non-unipotent finite supergroup schemes, these elementary supergroup schemes are definitely not sufficient for detection.  Drupieski and Kujawa~\cite{Drupieski/Kujawa:2019a}  introduced a slightly more general set of supergroup schemes which they show suffice for $\GL(a|b)_{(r)}$.
Following their notation, if $f$ is a \emph{$p$-polynomial}, meaning a polynomial  of the form $f(t)=\sum_i a_it^{p^i}$,  we shall write $\bbM_{n;f}$ 
for the supergroup scheme defined by replacing $s_n^{p^m}$ by $f(s_n)$ in~\eqref{eq:KEmn-}, and with the same comultiplication as $E_{m,n}^-$. Similarly, $\bbM_{n;f,\mu}$ is obtained in the same way from $E_{m,n-1,\mu}^-$.  The only unipotent ones among these are
our $E_{m,n}^-$ and $E_{m,n,\mu}^-$.  These are all quotients of a  supergroup scheme $\bbM_n$. Set
\begin{align*} 
k\bbM_n &\cong
\frac{k[s_1,\dots,s_{n-1},\sigma][[s_n]]}{(s_1^p,\dots,s_{n-1}^p,\sigma^2-s_n^p)}, \\
k\bbM_{n;f,\mu}&\cong \frac{k[s_1,\dots,s_n,\sigma]}{(s_1^p,\dots,s_{n-1}^p,f(s_n)+\mu s_1,\sigma^2-s_n^p)}\,.
\end{align*}
Thus $k\bbM_{n;f,\mu}$ is the group ring over $k$ of the corresponding supergroup scheme.  We shall suppress the field from the notation, so that we also regard $\bbM_n$ as a profinite supergroup scheme over any extension field $K$ of $k$. Similarly, if $f$ is a $p$-polynomial with coefficients in $K$, and $\mu\in K$, we shall write $\bbM_{n;f,\mu}$. Finally, if $f=t^{p^m}$ and $\mu=0$, we shall just write $\bbM_{n;m}$; this is the same as $E_{m,n}^-$.
\end{remark}

\section{\texorpdfstring
{$\pi$-points, $\pi$-supports and rank varieties}
{Ï-points,$\pi$-supports and rank varieties }}

To generalise the theory of $\pi$-points from finite group schemes to  finite supergroup schemes, instead of flat maps from $k[t]/(t^p)$, we
consider  maps of finite flat (or equivalently, projective) dimension from the superalgebra 
\[ 
A_k\colonequals \frac{k[t,\tau]}{(t^p-\tau^2)}\qquad (\tau \text{ odd, } t \text{ even}) 
\] 
We aim for theorems about unipotent finite supergroup schemes, but we stay more general for now in the interest of later developments, and also because we shall need to discuss $\GL(a|b)_{(n)}$ as part of the proof. So we shall include the elementary supergroup schemes  $\bbM_{n;f,\mu}$ from Remark~\ref{rk:elementaries}, but note that if $G$ is unipotent then the only ones that occur are the ones listed at the beginning of Section~\ref{se:elem}.

We view $A_k$ as a cocommutative Hopf superalgebra over $k$ with $\tau$ and $t$ primitive:
\[
\Delta(\tau)=\tau\otimes 1 + 1\otimes \tau\quad\text{and}\quad \Delta(t)=t\otimes 1 + 1\otimes t\,.
\]
This defines a homomorphism of algebras since
\begin{align*} 
\Delta(\tau^2)&=(\tau\otimes 1 + 1\otimes \tau)^2\\
&=\tau^2\otimes 1+\tau\otimes \tau -\tau\otimes \tau + 1\otimes \tau^2 \\
&= t^p\otimes 1 + 1\otimes t^p\\
&=\Delta(t^p).
\end{align*}
The cohomology ring of the $k$-algebra $A_k$ is easy to compute:
\[ 
\Ext^{*,*}_{A_k}(k,k)= k[\eta]\otimes \Lambda(u) 
\]
where $|\eta|=(1,1)$ and $|u|=(1,0)$. In particular,  modulo its radical it is a domain.

\begin{definition}
A \emph{$\pi$-point} $\alpha$ of  a finite supergroup scheme $G$ is given as follows. We choose an extension field $K$ of $k$, an elementary sub-supergroup scheme $E$ of $G_K$, and a map of superalgebras, but not necessarily respecting the coproduct,
\[ 
\alpha\colon A_K \to KE \subseteq KG_K 
\]
of finite flat dimension. 
\end{definition}

We put an equivalence relation on $\pi$-points, analogous to the one in Section~\ref{se:pi}.

\begin{definition}
We say that $\pi$-points $\alpha\colon A_K\to KG_K$ and $\beta\colon A_L\to LG_L$ are equivalent if, for all finite dimensional $kG$-modules $M$, the module $\alpha^*(M_K)$ has finite flat dimension if and only if $\beta^*(M_L)$ has finite flat dimension. We write $\Pi(G)$ for the set of equivalence classes of $\pi$-points of $G$.
\end{definition}

\begin{definition} 
We say a $\pi$-point $\alpha$ is \emph{$K$-rational} if it is defined over the field $K$, so that it is a map  $\alpha\colon A_K \to KG_K$.
\end{definition}

If $E$ is an elementary sub-supergroup scheme of $G_K$, and $\alpha\colon A_K\to KE$ is a $\pi$-point, then the radical of the kernel of restriction
\[ 
H^{*,*}(G,k) \subseteq H^{*,*}(G_K,K) \to H^{*,*}(E,K)  \to \Ext^{*,*}_{A_K}(K,K)
 \]
is a prime ideal $\fp(\alpha)$, for the target, modulo its radical, is a domain.

\begin{lemma}
If $\alpha$ and $\beta$ are equivalent $\pi$-points, then $\fp(\alpha)=\fp(\beta)$.
\end{lemma}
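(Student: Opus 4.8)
The statement to prove is that equivalent $\pi$-points give the same prime: if $\alpha\colon A_K\to KG_K$ and $\beta\colon A_L\to LG_L$ satisfy the equivalence relation (agreement of finite-flat-dimension behaviour on all finite-dimensional $kG$-modules), then $\fp(\alpha)=\fp(\beta)$. The plan is to show that $\fp(\alpha)$ is determined intrinsically by which modules $\alpha$ ``detects'', using Carlson-type modules, exactly as in the classical finite-group-scheme case (Theorem~\ref{th:pi-equiv}).

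First I would recall the definition: $\fp(\alpha)$ is the radical of the kernel of the composite
\[
H^{*,*}(G,k) \to H^{*,*}(G_K,K) \to H^{*,*}(E,K) \to \Ext^{*,*}_{A_K}(K,K),
\]
and the last ring modulo its radical is the domain $K[\eta]$ (degree $(1,1)$). So $\fp(\alpha)$ is a homogeneous prime. The key point is to express membership $x\in\fp(\alpha)$, for a homogeneous $x\in H^{*,*}(G,k)$, in module-theoretic terms that are manifestly invariant under the equivalence relation. For such an $x$ of positive degree, let $L_x$ denote a Carlson module realising $x$ — that is, the module fitting in a triangle $\Omega^{|x|}k \xrightarrow{x} k \to L_x$ in $\StMod(kG)$ (here I am using that $H^{*,*}(G,k)$ acts canonically and that these Carlson/Koszul-type modules exist in the $\bbZ/2$-graded setting just as for finite groups). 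The claim I would then establish is: $x\in\fp(\alpha)$ if and only if $\alpha^*((L_x)_K)$ does \emph{not} have finite flat dimension. Granting this, the equivalence $\alpha\sim\beta$ immediately forces $x\in\fp(\alpha)\iff x\in\fp(\beta)$ for every homogeneous positive-degree $x$, hence $\fp(\alpha)=\fp(\beta)$ (both are homogeneous primes in a ring generated over $H^{0,0}$ by positive-degree elements, and $\fp(\alpha)$ never contains $H^{0,0}=k$).

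To prove the claim, I would pull the triangle $\Omega^{|x|}k\xrightarrow{x}k\to L_x$ back along $\alpha$ (after base change to $K$). Since $\alpha^*$ is exact, $\alpha^*((L_x)_K)$ sits in a triangle with $\alpha^*(K)=K$ and the map induced by the image $\bar x$ of $x$ in $\Ext^{*,*}_{A_K}(K,K)$ acting on $K$; concretely, $\alpha^*((L_x)_K)$ has finite flat dimension over $A_K$ if and only if $\bar x$ is invertible after inverting... more precisely, if and only if $\bar x$ acts invertibly on the appropriate localisation, which — because $\Ext^{*,*}_{A_K}(K,K)$ modulo nilpotents is the domain $K[\eta]$ — happens precisely when $\bar x\neq 0$ modulo the radical, i.e.\ when $x\notin\fp(\alpha)$. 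The finite-flat-dimension condition over $A_K$ should be read through the standard dichotomy for modules over the ``rank-one'' superalgebra $A_K$: a module has finite flat (= finite projective) dimension iff it is free, and for the cone of multiplication by a cohomology class this freeness is governed by nonvanishing of that class modulo nilpotents. I would package this using the cohomological support over $A_K$ (its $\Proj$ is a point), so that $\alpha^*((L_x)_K)$ is non-free exactly when $x$ maps into the prime $\fp(\alpha)$.

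The main obstacle is the technical comparison over the exotic base algebra $A_K=K[t,\tau]/(t^p-\tau^2)$: one must verify that the ``projective $\Leftrightarrow$ free $\Leftrightarrow$ cohomology class nonzero mod nilpotents'' package genuinely holds there, since $A_K$ is not a group algebra in the classical sense and is not commutative as a $\bbZ/2$-graded algebra in general (though it is commutative in the ungraded sense, cf.\ the remark after Proposition~\ref{pr:alg}). I expect this to follow from the fact that $A_K$ is a local finite-dimensional self-injective (indeed Frobenius) algebra with $\Ext^{*,*}_{A_K}(K,K)/\sqrt{0}\cong K[\eta]$ a polynomial ring in one variable, so that $\StMod(A_K)$ behaves like the stable module category of $K[t]/(t^p)$: its support variety is a single point and finite projective dimension is equivalent to vanishing of support. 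A secondary, more bookkeeping-level obstacle is making sure the Carlson module argument is insensitive to the internal parity shift $\Pi$ and to the choice of representative of $x$ modulo nilpotents — but since $\fp(\alpha)$ is defined as a radical ideal and the equivalence relation only remembers finite-flat-dimension (a $\Pi$- and $\Omega$-stable condition), these ambiguities wash out.
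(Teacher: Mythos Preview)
Your strategy is exactly the one the paper uses: it cites Proposition~6.8 of \cite{Benson/Iyengar/Krause/Pevtsova:bikp7}, which proves the claim via Carlson's $L_\zeta$ modules, and remarks that the same argument carries over verbatim to arbitrary $G$. So at the level of the key idea --- characterise $x\in\fp(\alpha)$ by whether $\alpha^*((L_x)_K)$ has infinite flat dimension, then invoke the definition of equivalence --- you and the paper agree.

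There is, however, a genuine error in your treatment of the ``main obstacle''. The algebra $A_K=K[t,\tau]/(t^p-\tau^2)$ is \emph{not} finite-dimensional over $K$: neither $t$ nor $\tau$ is nilpotent, and indeed $A_K$ is an integral domain of Krull dimension one. In particular it is not Frobenius in the sense you invoke, and the dichotomy ``finite flat dimension $\Leftrightarrow$ free'' is false (for instance $A_K/(t)\cong K[\tau]/(\tau^2)$ has projective dimension one). What saves the argument is that $A_K$ is a graded-local hypersurface, hence Gorenstein, and the free resolution of $K$ displayed later in the paper is periodic of period one from degree one. So one works in the singularity category $\mathbf{D}^b(\mod A_K)/\mathrm{perf}(A_K)$: restriction along $\alpha$ descends to this category because $\alpha$ has finite flat dimension, $\Omega$ is invertible there, and $\alpha^*((L_x)_K)$ is zero in the singularity category precisely when $\bar x$ is a unit in Tate cohomology $\widehat\Ext^*_{A_K}(K,K)$, which by periodicity happens exactly when $\bar x$ is not nilpotent, i.e.\ when $x\notin\fp(\alpha)$. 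With this correction to the $A_K$-side, your outline is sound.
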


\begin{proof}
A proof of this result is given in  Proposition~6.8 of~\cite{Benson/Iyengar/Krause/Pevtsova:bikp7} for elementary supergroup schemes; it
involves Carlson's $L_\zeta$ modules. The same argument applies without any change to the general case. 
\end{proof}

\begin{definition}
 
By the preceding lemma, one gets a well-defined map
\[ 
\Phi_G \colon \Pi(G) \lra \Proj H^{*,*}(G,k) 
\]
where $\alpha$ is sent to $\fp(\alpha)$. By \cite[Theorem~6.9]{Benson/Iyengar/Krause/Pevtsova:bikp7}, see also \cite[\S 8]{Benson/Iyengar/Krause/Pevtsova:bikp7},  it is bijective when $G$ is an elementary supergroup scheme. See Section~\ref{se:homeo} for the general case.
\end{definition}

\begin{definition}
\label{de:phi}
Let $G$ be a finite \emph{unipotent} supergroup scheme over a field $k$ of characteristic $p\ge 3$. The \emph{$\pi$-support} of a $kG$-module $M$, denoted $\pisupp_G(M)$, is the subset of $\Pi(G)$ consisting of equivalences classes of $\pi$-points  $\alpha\colon A_K \to KG$ such that $\alpha^*(M_K)$ has infinite flat dimension.
This is well-defined by design.
\end{definition}

If $E$ is an elementary supergroup scheme, then by Proposition~\ref{pr:alg}  the group algebra $kE$ is isomorphic to  one of the following algebras:
\begin{enumerate} 
\item[I.]
$kE\cong k[s_1,\dots,s_n]/(s_1^p,\dots,s_n^p)$,  
\item[II.] $kE\cong k[s_1,\dots,s_n,\sigma]/(s_1^p,\dots,s_n^p, \sigma^2)$,
\item[III.] $kE \cong k[s_1,\dots,s_n,\sigma]/(s_1^p,\dots,s_{n-1}^p,s_n^{p^m},s_n^p-\sigma^2)$  
for some $m \ge 2, n\ge 1$.
\end{enumerate} 
In each case, a set of representatives for equivalence classes of $\pi$-points of $E$ can be prescribed by choosing an  extension field $K$ of $k$ and a point  $0\ne \lambda = (\lambda_1,\dots,\lambda_{n+1})$ in $\bbA^{n+1}(K)$. The $\pi$-point
\[
\alpha_{\lambda}\colon A_K\to KE_K 
\]
 is as follows: In case I, which is covered by Carlson's theory of rank varieties, set
\[
\alpha_\lambda(t) = \lambda_1 s_1 + \dots + \lambda_n s_n \quad\text{and}\quad  \alpha_\lambda(\tau)= 0\,.
\]
In the second case, set
\[
\alpha_\lambda(t) = \lambda_1 s_1 + \dots + \lambda_n s_n \quad\text{and}\quad  \alpha_\lambda(\tau)= \lambda_{n+1}\sigma\,.
\]
 See \cite[\S 8]{Benson/Iyengar/Krause/Pevtsova:bikp7}.  In the last case, the $\pi$-point is defined by
\begin{equation}
\label{eq:lambda}
\begin{aligned} 
\alpha_\lambda(t) &= \lambda_1 s_1 + \dots +
\lambda_{n-1}s_{n-1}+\lambda_{n-1}s_{n-1}+
\lambda_n s_n^{p^{m-1}}+\lambda_{n+1}^2 s_n, \\
\alpha_\lambda(\tau)&= \lambda_{n+1}^p\sigma.
\end{aligned}
\end{equation}
Note that 
\[ 
(\alpha_\lambda(t))^p=\lambda_{n+1}^{2p}s_n^p =\lambda_{n+1}^{2p}\sigma^2=(\alpha_\lambda(\tau))^2 
\] 
in $KE_K$, so this does indeed define a homomorphism of algebras.

The result below, which reproduces \cite[Theorem~4.12]{Benson/Iyengar/Krause/Pevtsova:bikp7},  extends Theorem~\ref{th:BCR}.

\begin{theorem}
\label{th:super-Dade}
Let $E$ be an elementary supergroup scheme defined over a field $k$. An  $E$-module $M$ is projective if and only if $\alpha_\lambda^*(M_K)$ has finite flat dimension for every extension field $K$ of $k$ and every $0\ne \lambda\in \bbA^{n+1}(K)$. \qed
\end{theorem}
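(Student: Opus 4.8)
The plan is to reduce the statement to the three algebra types listed in Proposition~\ref{pr:alg} and to treat each by a local-to-global argument modelled on the classical Benson--Carlson--Rickard theorem (Theorem~\ref{th:BCR}). One direction is essentially formal: if $M$ is projective over $kE$ then for any extension field $K$ the module $M_K$ is projective over $KE_K$, and any algebra map $\alpha_\lambda\colon A_K\to KE_K$ of finite flat dimension pulls a projective module back to a module of finite flat dimension; since $A_K$ is self-injective in the appropriate $\bbZ/2$-graded sense (its cohomology $k[\eta]\otimes\Lambda(u)$ shows it is a complete intersection, hence Gorenstein), finite flat dimension over $A_K$ forces $\alpha_\lambda^*(M_K)$ to be projective, which is what we want. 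So the content is entirely in the converse: assuming $\alpha_\lambda^*(M_K)$ has finite flat dimension for all $K$ and all $0\ne\lambda$, deduce $M$ is projective.

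For the converse I would argue by cases. In case I, $kE$ is a tensor product of copies of $k[s]/(s^p)$, i.e.\ (the group algebra of) an elementary abelian $p$-group, the $\pi$-points $\alpha_\lambda$ with $\tau\mapsto 0$ are exactly Dade's cyclic shifted subgroups, and the statement is literally Theorem~\ref{th:BCR}. In case II, $kE\cong k[s_1,\dots,s_n,\sigma]/(s_i^p,\sigma^2)$ is the group algebra of $(\bbZ/p)^n\times\bbG_a^-$; here I would combine the elementary-abelian case with the exterior factor, for instance by a rank-variety computation over the polynomial-part of the cohomology, or by invoking the detection result already recorded for this algebra in \cite[\S8]{Benson/Iyengar/Krause/Pevtsova:bikp7}. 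Case III is the genuinely new one: $kE\cong k[s_1,\dots,s_n,\sigma]/(s_1^p,\dots,s_{n-1}^p,s_n^{p^m},s_n^p-\sigma^2)$ with $m\ge 2$. The strategy is to realise $kE$ as a deformation/limit built from the profinite algebra $k\bbM_n$ of Remark~\ref{rk:elementaries}, to use generic points: for each prime $\fp$ of the cohomology ring pick a generic point $\lambda$ over a large enough field $K$, observe that the $\alpha_\lambda$ of \eqref{eq:lambda} hit every such $\fp$, and then run a Koszul-object / $L_\zeta$-module argument showing that if $M_K$ is not of finite flat dimension then some $\alpha_\lambda^*(M_K)$ fails to be, by testing against the generic point of a prime in the support.

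The technical engine I expect to carry the converse is the following dichotomy for a module over a complete intersection like $kE$: either $M$ has finite projective dimension, or there is a closed point (equivalently, after base change, a $\pi$-point) in $\supp_E(M)$; combined with Remark~\ref{rk:classical} (support is the closed set cut out by the annihilator of $\Ext^{*,*}_{kE}(M,M)$ for finite-dimensional $M$, of $H^{*,*}(E,M)$ in the unipotent case) and the already-established bijectivity of $\Phi_E\colon \Pi(E)\to\Proj H^{*,*}(E,k)$ for elementary $E$, this lets one translate ``not projective'' into ``some $\alpha_\lambda^*(M_K)$ not of finite flat dimension''. For infinitely generated $M$ one first reduces to the finite-dimensional case by a colimit/Rickard-idempotent argument exactly as in \cite{Benson/Carlson/Rickard:1996a}, passing to a large extension field so that all relevant generic points become $K$-rational.

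The main obstacle, and where most of the work sits, is case III and specifically the behaviour of the factor $k[s,\sigma]/(s^{p^m},s^p-\sigma^2)$: unlike $k[t]/(t^p)$ this algebra is not of the ``$p$-th power zero'' type, so the elementary rank-variety detection of projectivity over $k[t]/(t^p)$ does not apply directly, and one must check that the specific one-parameter family \eqref{eq:lambda} is rich enough to detect non-projectivity --- i.e.\ that every prime in $\Proj H^{*,*}(E,k)$ is $\fp(\alpha_\lambda)$ for a $\lambda$ in the family, and that the flat-dimension condition along these $\alpha_\lambda$ is equivalent to projectivity on the nose. Making the identity $(\alpha_\lambda(t))^p=(\alpha_\lambda(\tau))^2$ do real work, and controlling the interaction of the even generator $s_n$ (which is \emph{not} $p$-nilpotent) with the odd generator $\sigma$, is the crux; I would handle it by reducing to $\bbM_n$ and its ``generic'' $\pi$-point as in \cite[\S6--8]{Benson/Iyengar/Krause/Pevtsova:bikp7}, where the needed $L_\zeta$-module computations are carried out.
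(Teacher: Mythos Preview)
The paper does not give a proof of this theorem: it is stated with a \qed\ and attributed to \cite[Theorem~4.12]{Benson/Iyengar/Krause/Pevtsova:bikp7}. So there is no in-paper argument to compare your proposal against; what I can do is check your outline for internal soundness and for consistency with the logical order in \cite{Benson/Iyengar/Krause/Pevtsova:bikp7} as the present paper uses it.

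Your forward direction is fine in substance, but one remark is off. The algebra $A_K=K[t,\tau]/(t^p-\tau^2)$ is infinite dimensional over $K$; it is Gorenstein (a hypersurface), but not self-injective in the Artinian sense, and finite flat dimension over $A_K$ does \emph{not} force projectivity. Fortunately the theorem only asks for finite flat dimension of $\alpha_\lambda^*(M_K)$, and that follows directly: $M$ projective over $kE$ implies $M_K$ projective over $KE_K$, and since $\alpha_\lambda$ has finite flat dimension by the definition of a $\pi$-point, the restriction of a projective has finite flat dimension. So drop the self-injectivity step; it is both incorrect and unnecessary.

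The converse is where you should be careful about circularity. Your ``technical engine'' translates non-projectivity of $M$ into a nonempty $\supp_E(M)$ and then into a $\pi$-point $\alpha_\lambda$ via the bijection $\Phi_E\colon \Pi(E)\to \Proj H^{*,*}(E,k)$. But in \cite{Benson/Iyengar/Krause/Pevtsova:bikp7} the detection theorem you are trying to prove is Theorem~4.12, while the bijectivity of $\Phi_E$ is Theorem~6.9 and the $L_\zeta$-module computations you invoke are in \S\S6--8; in the present paper these later results are repeatedly used \emph{downstream} of Theorem~\ref{th:super-Dade} (see the proofs of Theorems~\ref{th:detection} and~\ref{th:tensor} and the surjectivity argument in Theorem~\ref{th:Phi-bijective}). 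So as written, your Case~III argument is at serious risk of assuming what it sets out to prove. If you want an independent proof, you should argue directly from the explicit periodic free resolution of $K$ over $A_K$ displayed just after Definition~8.13 in this paper: projectivity of $\alpha_\lambda^*(M_K)$ is a rank condition on the $2\times 2$ block operator \eqref{eq:matrix}, and the converse amounts to showing that if this operator has maximal rank for all $\lambda$ (over all $K$) then $M$ is projective over $kE$. That is a statement about the local algebra of the complete intersection $kE$ and can be attacked with Koszul/matrix-factorisation methods without appealing to $\Phi_E$ or to \S\S6--8 of \cite{Benson/Iyengar/Krause/Pevtsova:bikp7}. Your Case~I and Case~II reductions are fine.
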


Combining with Theorem~\ref{th:super-elem-detect}  yields that $\pi$-points detect projectivity of modules.

\begin{theorem}
\label{th:detection}
Let $G$ be a unipotent finite supergroup scheme over a field $k$ of characteristic $p\ge 3$. A $kG$-module $M$ is projective if and
only if $\pisupp_G(M)=\varnothing$. 
\end{theorem}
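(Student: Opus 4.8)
The plan is to combine the two detection results already available: Theorem~\ref{th:super-elem-detect}(ii), which says that projectivity of a $kG$-module is detected after base change on elementary sub-supergroup schemes, and Theorem~\ref{th:super-Dade}, which says that projectivity over an elementary supergroup scheme is detected by the $\pi$-points $\alpha_\lambda$. The only real content beyond splicing these together is to check that ``has finite flat dimension'' is the right condition to track, and that it agrees with ``is projective'' at the two places where we need it.

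First I would record the trivial direction: if $M$ is projective, then for any $\pi$-point $\alpha\colon A_K\to KG_K$ the restriction $\alpha^*(M_K)$ is the restriction of the projective $KG_K$-module $M_K$ along a map of finite flat dimension, hence has finite flat dimension over $A_K$; so no $\pi$-point lies in $\pisupp_G(M)$, i.e.\ $\pisupp_G(M)=\varnothing$. Here one uses that $M_K$ is $KG_K$-projective (extension of scalars preserves projectivity for finite-dimensional Hopf algebras, or more simply that $KG_K$ is a Frobenius algebra so projective $=$ injective $=$ flat), together with the fact that restriction along a finite-flat-dimension map sends finite flat dimension to finite flat dimension.

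Conversely, suppose $M$ is not projective. By Theorem~\ref{th:super-elem-detect}(ii) there is an extension field $K$ of $k$ and an elementary sub-supergroup scheme $E\subseteq G_K$ such that the restriction $M_K{\downarrow_E}$ is not projective as an $E$-module. Now apply Theorem~\ref{th:super-Dade} to the $E$-module $M_K{\downarrow_E}$ over the field $K$: since it is not projective, there is a further extension field $K'$ of $K$ and a point $0\ne\lambda\in\bbA^{n+1}(K')$ such that $\alpha_\lambda^*\big((M_K{\downarrow_E})_{K'}\big)$ has infinite flat dimension over $A_{K'}$. Composing $\alpha_\lambda\colon A_{K'}\to K'E_{K'}$ with the inclusion $K'E_{K'}\subseteq K'G_{K'}$ (base changed from $E\subseteq G_K$) produces a $\pi$-point $\alpha$ of $G$ defined over $K'$, and by construction $\alpha^*(M_{K'})$ has infinite flat dimension. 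Hence the class of $\alpha$ lies in $\pisupp_G(M)$, so $\pisupp_G(M)\ne\varnothing$.

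The only point requiring care — and the one I would expect to be the main obstacle if anything is — is the bookkeeping that $\alpha_\lambda$ composed with the inclusion genuinely is a $\pi$-point of $G$ in the sense of Definition, i.e.\ that it has finite flat dimension as a map $A_{K'}\to K'G_{K'}$; this is where one uses that $\alpha_\lambda$ has finite flat dimension over $K'E_{K'}$ (part of the content of Theorem~\ref{th:super-Dade}, or checked directly from the explicit formulas~\eqref{eq:lambda}) and that $K'E_{K'}$ is a subalgebra of $K'G_{K'}$ over which the latter is free, so finite flat dimension is preserved by further restriction. One should also note that $(M_K{\downarrow_E})_{K'}=(M_{K'}){\downarrow_{E_{K'}}}$, so that $\alpha^*(M_{K'})$ really is the module to which Theorem~\ref{th:super-Dade} applies; this is just functoriality of extension of scalars and restriction. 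Everything else is formal.
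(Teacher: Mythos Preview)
Your proof is correct and follows essentially the same approach as the paper's: both combine Theorem~\ref{th:super-elem-detect}(ii) with Theorem~\ref{th:super-Dade}, the only difference being that the paper argues the converse directly (empty $\pi$-support $\Rightarrow$ every restriction to an elementary sub-supergroup scheme has empty $\pi$-support $\Rightarrow$ each such restriction is projective $\Rightarrow$ $M$ is projective) while you argue by contrapositive. Your extra care about the bookkeeping (that the composite $A_{K'}\to K'E_{K'}\subseteq K'G_{K'}$ is a $\pi$-point, and that base change commutes with restriction) is correct but more than the paper spells out.
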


\begin{proof}
If $M$ is projective, $K$ is an extension field of $k$, and $E$ is a sub-supergroup scheme of $G_K$, then $\res_{G_K,E}(M_K)$ is projective. So if $\alpha\colon A_K\to E \to G_K$ is a $\pi$-point then $\alpha^*(M_K)$ has finite flat dimension. Thus $\pisupp_G(M)$ is empty.

Conversely, if $\pisupp_G(M)=\varnothing$ and $E$ is a sub-supergroup scheme of $G_K$ for some extension field $K$, then  $\res_{G_K,E}(M_K)$ has empty $\pi$-support. By Theorem~\ref{th:super-Dade}, $\res_{G_K,E}(M_K)$ is projective for every such $K$ and $E$. It follows from
Theorem~\ref{th:super-elem-detect} that $M$ is a projective $kG$-module.
\end{proof}

A crucial property of $\pi$-support is the tensor product formula.

\begin{theorem}
\label{th:tensor}
Let $G$ be a unipotent finite supergroup scheme over a field $k$ of characteristic $p\ge 3$, and let $M$ and $N$ be $kG$-modules.  As subsets of $\Pi(G)$ we have
\[ 
\pisupp_G(M\otimes_k N) = \pisupp_G(M) \cap \pisupp_G(N). 
\]
\end{theorem}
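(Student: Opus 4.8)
The plan is to reduce the statement to a computation over elementary supergroup schemes and ultimately over the single algebra $A_K$, where it amounts to the behaviour of finite flat dimension under tensor products of $k[t,\tau]/(t^p-\tau^2)$-modules. First, observe that since every $\pi$-point $\alpha\colon A_K\to KE\subseteq KG_K$ factors through restriction to an elementary sub-supergroup scheme $E$ of $G_K$, and since restriction is a monoidal functor, it suffices to establish the formula $\pisupp_E(M\otimes_k N)=\pisupp_E(M)\cap\pisupp_E(N)$ for $E$ an elementary supergroup scheme and then check that the equivalence classes, hence the supports, are compatible with restriction along $G_K\supseteq E$. Concretely: a class $\alpha\in\Pi(G)$ lies in $\pisupp_G(M\otimes_kN)$ iff, choosing a representative factoring through $E\subseteq G_K$, the module $\alpha^*((M\otimes_kN)_K)\cong\alpha^*(M_K)\otimes_K\alpha^*(N_K)$ has infinite flat dimension over $A_K$; so the whole theorem follows once we know, for $A_K$-modules $P$ and $Q$, that $P\otimes_K Q$ has finite flat dimension iff $P$ or $Q$ does.

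The inclusion $\pisupp_G(M\otimes_kN)\subseteq\pisupp_G(M)\cap\pisupp_G(N)$ is the easy half: if $\alpha^*(M_K)$ has finite flat dimension then, tensoring a finite flat resolution with $\alpha^*(N_K)$ over $K$ (flat base change, everything is over a field), $\alpha^*(M_K)\otimes_K\alpha^*(N_K)$ again has finite flat dimension, and symmetrically; so if $\alpha\in\pisupp_G(M\otimes_kN)$ then $\alpha$ must lie in both $\pisupp_G(M)$ and $\pisupp_G(N)$. For the reverse inclusion one needs the genuine content: if neither $\alpha^*(M_K)$ nor $\alpha^*(N_K)$ has finite flat dimension, then neither does their tensor product. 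Over $A_K$, whose cohomology ring $\Ext^{*,*}_{A_K}(K,K)=K[\eta]\otimes\Lambda(u)$ is a domain modulo its radical and whose stable module category is that of the graded complete intersection $k[t]/(t^p)$ (the extra odd variable $\tau$ being determined by $\tau^2=t^p$), I would invoke the tensor product theorem in that hypersurface-like setting: finite flat dimension of a module over $A_K$ is detected by vanishing of a rank invariant of the nilpotent operator $t$, and for truncated polynomial algebras $k[t]/(t^p)$ the non-projectivity of a tensor product of two non-projective modules is the classical statement (ultimately a consequence of the fact that $k[t]/(t^p)$ has a unique minimal thick tensor-ideal, or directly from the Künneth-type behaviour of the rank variety). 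The reference point is the $\pi$-point machinery of \cite{Benson/Iyengar/Krause/Pevtsova:bikp7}, where exactly this reduction is carried out; the argument there for elementary supergroup schemes applies verbatim, and Theorems~\ref{th:super-elem-detect} and~\ref{th:super-Dade} glue the elementary case to the general unipotent case.

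The main obstacle is the reverse inclusion over $A_K$, i.e.\ showing that a tensor product of two $A_K$-modules of infinite flat dimension again has infinite flat dimension. The subtlety is that $A_K$ is not a tensor-triangulated category in the naive sense one would like — the coproduct on $A_K$ is needed to make $\otimes_K$ land back in $A_K$-modules, and one must be careful that the relevant $\pi$-point $\alpha$ is only an algebra map (not a Hopf map), so $\alpha^*(M_K\otimes_kN_K)$ is computed using the coproduct of $A_K$, not of $KG_K$; reconciling these is exactly the step where the explicit form of the $\pi$-points $\alpha_\lambda$ in cases I, II, III from \eqref{eq:lambda} is used, together with the fact that $A_K$ itself carries a cocommutative Hopf structure with $t,\tau$ primitive. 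Once one knows the tensor product formula holds on the nose for $A_K$-modules with respect to that Hopf structure — which reduces to the truncated polynomial algebra $k[t]/(t^p)$ after killing the redundant variable — the rest is bookkeeping with restriction functors and the detection theorems already quoted.
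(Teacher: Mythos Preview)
Your first reduction---using that restriction along the Hopf inclusion $KE\subseteq KG_K$ is monoidal, so that the tensor product formula for $G$ follows from the formula for elementary $E$---is exactly what the paper does, and is correct. The paper then simply cites \cite[Theorem~7.6]{Benson/Iyengar/Krause/Pevtsova:bikp7} for the elementary case and stops.

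Your attempt to go further and reduce the elementary case to a statement purely about $A_K$-modules has a genuine gap. You write the isomorphism
\[
\alpha^*\bigl((M\otimes_k N)_K\bigr)\;\cong\;\alpha^*(M_K)\otimes_K\alpha^*(N_K)
\]
and then base the rest of the argument on it. This isomorphism is false in general: the left-hand $A_K$-module structure comes from the coproduct on $KE$, the right-hand one from the coproduct on $A_K$, and $\alpha$ is only an algebra map, not a Hopf map. You do notice this later and call it ``the subtlety,'' but the resolution you offer---that the explicit $\alpha_\lambda$ together with the Hopf structure on $A_K$ somehow reconcile the two---is an assertion, not an argument. Nothing in your text explains why the two different $A_K$-actions on the same vector space should have the same finite-flat-dimension behaviour, and this is precisely the nontrivial content of the elementary case. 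In the classical $\pi$-point theory one shows that the two maps $A_K\to KE\otimes KE$ given by $\Delta_E\circ\alpha$ and $(\alpha\otimes\alpha)\circ\Delta_{A_K}$ are equivalent as $\pi$-points of $E\times E$ (they agree to first order in the radical), and that is what makes the tensor formula go through; the super version requires the analogous verification and is what \cite[Theorem~7.6]{Benson/Iyengar/Krause/Pevtsova:bikp7} actually establishes.

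A secondary issue: even granting the reduction to $A_K$, the claim that the tensor product formula for $A_K$ ``reduces to the truncated polynomial algebra $k[t]/(t^p)$ after killing the redundant variable'' is too quick. The algebra $A_K$ is infinite-dimensional over $K$, and finite flat dimension over $A_K$ is not the same as projectivity over $K[t]/(t^p)$; the periodicity of the free resolution of $K$ is what makes the comparison work, but you have not invoked it in a way that closes the argument.
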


\begin{proof}
For any elementary sub-supergroup scheme $E$ of $G_K$, the restriction functor  
\[
\res_{G_K, E}\colon \Mod G_K \to \Mod E
\]
commutes with tensor product.  Hence, it suffices to prove the formula for elementary supergroup schemes. 
This is done in \cite[Theorem~7.6]{Benson/Iyengar/Krause/Pevtsova:bikp7}. 
\end{proof}

For finite-dimensional modules over elementary supergroups $\pi$-support  has a ``rank variety" interpretation, analogous to  Carlson's original construction
recalled in Definition~\ref{def:mcV}. 

Let $k[Y_1,\dots,Y_{n+1}]$ be the coordinate ring of $\bbA^{n+1}(k)$.  Since the map 
\[
\Phi_E\colon P(E) \to \Proj H^{*,*}(E,k) \cong \mathbb P^n (k)
\]
is a homeomorphism, for any $\fp\in\Proj k[Y_1,\dots,Y_{n+1}]$ there is a ``generic" point  $\lambda\in\bbA^{n+1}(K)$ such that $\Phi_E(a_\lambda) = \fp$ where $a_\lambda$  is the $\pi$-point defined in \eqref{eq:lambda}. 

\begin{definition}
Let $k$ be a field of characteristic $p\ge 3$, and let $E$ be a Witt elementary supergroup scheme over $k$. If $M$ is an $E$-module, we define $\mcV_E^r(M)$ to be the set of homogeneous primes $\fp\in\Proj k[Y_1,\dots,Y_{n+1}]$ such that if $\lambda\in\bbA^{n+1}(K)$ is generic for $\fp$ then
$\alpha^*_\lambda(M_K)$ has infinite flat dimension as a $KA_K$-module.
\end{definition}

It follows from \cite{Benson/Iyengar/Krause/Pevtsova:bikp7} that $\Phi_E$ 
takes the $\pi$-support of $M$ bijectively to $\mcV_E^r(M)$. 

\bigskip

Here is a free resolution of the trivial $A_K$-module $K$:
\[ 
\cdots \to \Pi A_K \oplus A_K 
\xrightarrow{\left(\begin{smallmatrix} \tau & t \\ \!-t^{p-1} & -\tau
\end{smallmatrix}\right)} 
A_K \oplus \Pi A_K
\xrightarrow{\left(\begin{smallmatrix} \tau & t \\ \!-t^{p-1} & -\tau
\end{smallmatrix}\right)} 
\Pi A_K \oplus A_K 
\xrightarrow{(\tau,t)} A_K \to 0 
\]
This is periodic from degree one onwards, with period one. So
for a $\pi$-point $\alpha\colon A_K\to KE_K$ and a $E$-module $M$, 
it follows that $\alpha^*(M_K)$ has finite projective dimension as a
$A_K$-module if and only if $\Ext^1_{A_K}(K,\alpha^*(M_K))=0$.
Taking homomorphisms from the above resolution to $\alpha^*(M_K)$, 
this is true if and only if
\[ 
\Pi M_K \oplus M_K
\xrightarrow{\left(\begin{smallmatrix} \alpha_\lambda(\tau) & \alpha_\lambda(t) \\ 
-\alpha_\lambda^{p-1}(t) & 
  -\alpha_\lambda(\tau)\end{smallmatrix}\right)}
M_K\oplus \Pi M_K
\xrightarrow{\left(\begin{smallmatrix} \alpha_\lambda(\tau) & \alpha_\lambda(t) \\ 
-\alpha_\lambda^{p-1}(t) & 
  -\alpha_\lambda(\tau)\end{smallmatrix}\right)} 
\Pi M_K \oplus M_K 
\]
is exact. In particular, if $M$ is a finitely generated $E$-module, of
dimension $d$, we can think of the action of
\begin{equation}\label{eq:matrix}
\left(\begin{smallmatrix}  \alpha_\lambda(\tau) & \alpha_\lambda(t) \\ 
-\alpha_\lambda^{p-1}(t) & 
  -\alpha_\lambda(\tau)\end{smallmatrix}\right) 
\end{equation}
on a direct sum of two copies of $M_K$ as a 
$2d \times 2d$ matrix whose square is zero. 
Exactness is therefore just the condition that 
this matrix has rank equal to $d$, which is the largest possible
for a square zero matrix of this size. This condition 
fails if and only if every $d\times d$ minor of the matrix
vanishes. In particular, this is a set of homogeneous 
polynomial conditions, and therefore defines a 
closed homogeneous subvariety of $\bbA^{n+1}(K)$.
So the following is an analogue of Carlson's rank variety.

\begin{definition}
If $k$ is algebraically closed, and $M$ is a finitely generated
$E$-module of dimension $d$, we define the \emph{rank variety}
$V^r_E(M)$ to be $\{0\}$ together with the set of 
$\lambda\in\bbA^{n+1}(k)$ such that $\alpha_\lambda^*(M)$ 
has infinite flat dimension. Equivalently, $V^r_E(M)$ is the
set of points $\lambda$ where the $2d\times 2d$
matrix representing the action of~\eqref{eq:matrix} on $M$ 
has rank strictly less than $d$.
\end{definition}

If we extend the field from $k$ to $K$, the polynomial equations defining 
the variety $V^r_{E_K}(M_K)$ are exactly the same as those defining
$V^r_{E}(M)$. It follows that a prime $\fp$ is in $\mcV^r_E(M)$ if and
only if a generic point for $\fp$ in a suitable $\bbA^{n+1}(K)$ is
contained in the zero set of these polynomials.
This happens if and only if $\fp$ is contained 
in the radical ideal defining $V^r_E(M)$. 

\begin{theorem}
For $M$ a finite dimensional $kG$-module, 
$\mcV^r_E(M)$ is the Zariski closed subset of $\Proj k[Y_1,\dots,Y_{n+1}]$
defined by the rank variety $V^r_E(M)$. 
\end{theorem}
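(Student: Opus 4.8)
The plan is to unwind the definition of $\mcV^r_E(M)$ and line it up with the ideal of minors extracted just above the statement. Say $M$ has dimension $d$, and let $f_1,\dots,f_N\in k[Y_1,\dots,Y_{n+1}]$ be the homogeneous polynomials cutting out the rank variety $V^r_E(M)$, namely the $d\times d$ minors of the $2d\times 2d$ matrix by which \eqref{eq:matrix} acts on $M\oplus\Pi M$, with the $\lambda_i$ read as the coordinate functions $Y_i$. Two features of these polynomials, both already noted above, are what the argument rests on: they have coefficients in $k$, because $M$ is defined over $k$; and they are literally unchanged when the ground field is enlarged. Write $I=\sqrt{(f_1,\dots,f_N)}$, so that the closed subset of $\Proj k[Y_1,\dots,Y_{n+1}]$ ``defined by $V^r_E(M)$'' is by definition $V(I)$.

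First I would invoke the reformulation of flat dimension established above: using the periodic free resolution of the trivial $A_K$-module displayed earlier (periodic of period one from degree one onwards), a $\pi$-point $\alpha_\lambda\colon A_K\to KE_K$ has $\alpha_\lambda^*(M_K)$ of finite flat dimension, equivalently (since $A_K$ is Noetherian and $\alpha_\lambda^*(M_K)$ is a finitely generated $A_K$-module) of finite projective dimension, precisely when $\Ext^1_{A_K}(K,\alpha_\lambda^*(M_K))=0$, and that vanishing holds exactly when the matrix of \eqref{eq:matrix} evaluated at $\lambda$ has rank $d$. Consequently $\alpha_\lambda^*(M_K)$ has infinite flat dimension if and only if that matrix has rank strictly less than $d$, that is, if and only if $f_j(\lambda)=0$ for every $j$.

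Then I would feed in the notion of generic point. Fix $\fp\in\Proj k[Y_1,\dots,Y_{n+1}]$ and a generic point $\lambda\in\bbA^{n+1}(K)$ for it, so that a homogeneous $g\in k[Y_1,\dots,Y_{n+1}]$ vanishes at $\lambda$ if and only if $g\in\fp$. Chaining the equivalences, $\fp\in\mcV^r_E(M)$ holds if and only if $f_j(\lambda)=0$ for all $j$, if and only if $f_j\in\fp$ for all $j$, if and only if $(f_1,\dots,f_N)\subseteq\fp$, and, since $\fp$ is prime, if and only if $I\subseteq\fp$; that is, if and only if $\fp\in V(I)$. Because the $f_j$ have coefficients in $k$, this criterion makes no reference to the auxiliary choices of $K$ and $\lambda$, so the chain is well posed and yields $\mcV^r_E(M)=V(I)$, which is the assertion.

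I do not anticipate a genuine obstacle: the substantive input, namely the periodic resolution, the passage to a rank condition on \eqref{eq:matrix}, and the field-independence of the defining equations, is all in place by the time the statement is reached, and what remains is bookkeeping. The one point that warrants a moment's care is the equivalence between $\alpha_\lambda^*(M_K)$ having finite flat dimension over $A_K$ and the vanishing of $\Ext^1_{A_K}(K,\alpha_\lambda^*(M_K))$, which rests on the period-one resolution of $K$ over $A_K$ together with the coincidence of finite flat and finite projective dimension for the finitely generated $A_K$-module $\alpha_\lambda^*(M_K)$; both ingredients are available from the discussion above.
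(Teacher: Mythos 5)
Your proposal is correct and follows essentially the same route as the paper: the argument rests on the observation, made immediately before the theorem, that the defining polynomials (the $d\times d$ minors of the matrix \eqref{eq:matrix}) have coefficients in $k$ and do not change under field extension, so a prime $\fp$ lies in $\mcV^r_E(M)$ precisely when a generic point for $\fp$ lies in the common zero set, which in turn happens exactly when $\fp$ contains the radical ideal of $V^r_E(M)$. You spell out a few steps the paper leaves implicit (the reduction of ``finite flat dimension'' to the vanishing of $\Ext^1_{A_K}(K,-)$ via the period-one resolution, and the passage from finite flat to finite projective dimension for the finitely generated module $\alpha_\lambda^*(M_K)$), but the logical skeleton is identical.
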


On the other hand, it can be shown that for infinitely generated 
$E$-modules, every subset of $\Proj k[Y_1,\dots,Y_{n+1}]$ occurs
as $\mcV^r_E(M)$ for suitably constructed $M$.


\section{\texorpdfstring
{$\Phi_G$ is a homeomorphism}
{$\Phi_G$ is a homeomorphism }}
\label{se:homeo}

In this section we show that the map $\Phi_G\colon \Pi(G) \to \Proj H^{*,*}(G,k)$, introduced in Definition~\ref{de:phi}, is bijective for a unipotent finite supergroup scheme $G$.  Surjectivity will  follow from Theorem~\ref{th:super-elem-detect} whereas for injectivity  we have to recall some recent results of Drupieski and Kujawa. We shall use supergroup schemes $\bbM_n$ and $\bbM_{n;f,\mu}$ introduced in  Remark~\ref{rk:elementaries}.

Let $\Hom_{\sgs/k}(\bbM_n,G)$ be a functor from commutative $k$-superalgebras to sets defined as 
\[ \Hom_{\sgs/k}(\bbM_n,G)(R) = \Hom_{\sgs/R}(\bbM_{n,R}, G_R).\] 

\begin{definition}
It is shown in ~\cite[Theorem~3.3.6]{Drupieski/Kujawa:2019a} that if $G$ is any affine supergroup scheme of finite type then 
$\Hom_{\sgs/k}(\bbM_n,G)$ has the structure of a connected affine superscheme of finite type over $k$, which we denote
$\bfV_n(G)$. Similarly, $\Hom_{\sgs/k}(\bbM_{n;f,\mu},G)$ is an affine
sub-superscheme of  $\bfV_n(G)$, which we denote $\bfV_{n;f,\mu}(G)$.
In the case $f=t^{p^m}$ and $\mu=0$, we shall write $\bfV_{n;m}(G)$.
\end{definition} 
The $K$-points of $\bfV_n(G)$ are maps $\bbM_{n,K} \to G_K$ as
above. In particular, we can identify $\bfV_n(G)$ with $\bfV_n(G_{(n)})$.

In \S6.2 of~\cite{Drupieski/Kujawa:2019a}, the authors construct  a map of affine superschemes
\[ 
\Psi_G\colon \bfV_n(G)\to\Spec H^{*,*}(G,k) 
\]
coming from a map of coordinate rings
\[ 
\psi_G \colon H^{*,*}(G,k) \to k[\bfV_n(G)]. 
\]
Let $\GL(a|b)$ be the general linear supergroup; see, for example, \cite{Drupieski:2016a}. For a connected finite 
supergroup scheme $\GL(a|b)_{(n)}$ which is the Frobenius kernel of $\GL(a|b)$, we have the
following. There is a map constructed in~\S6 of~\cite{Drupieski/Kujawa:2019a}
\[ 
\bar\phi\colon k[\bfV_n(\GL(a|b))]_\ev= k[\bfV_n(\GL(a|b)_{(n)})]_\ev \to H^{*,0}(\GL(a|b)_{(n)}) 
\]
with the property that the composite
\[  
k[\bfV_n(\GL(a|b))]_\ev \xrightarrow{\bar\phi}  H(\GL(a|b)_{(n)}) \xrightarrow{\psi_{\GL(a|b)_{(n)}}}
k[\bfV_{n;f,\mu}(\GL(a|b))]_\ev 
\]
is the Frobenius morphism $F^n$ followed by the quotient map
\[  k[\bfV_n(\GL(a|b))]_\ev \to k[\bfV_{n;f,\mu}(\GL(a|b))]_\ev. \]
This has the following consequence, as pointed out in Theorem~6.2.3
of~\cite{Drupieski/Kujawa:2019a}.

\begin{theorem}\label{th:VnGLabn}
The image of map 
\[ \psi_{\GL(a|b)_{(n)}} \colon H^{*,*}(\GL(a|b)_{(n)},k) \to
  k[\bfV_{n;f,\mu}(\GL(a|b)_{(n)})] \]
contains the $p^n$th power of every element of $k[\bfV_{n;f,\mu}(\GL(a|b)_{(n)})]$.
The map
\[ \Psi_{\GL(a|b)_{(n)}}\colon \bfV_n(\GL(a|b)_{(n)}) \to \Spec H^{*,*}(\GL(a|b)_{(n)},k) \]
is injective on $K$-points for all
extension fields $K$ of $k$.
\qed
\end{theorem}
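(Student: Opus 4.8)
The plan is to deduce both assertions formally from the factorisation recorded immediately before the statement: the composite $k[\bfV_n(\GL(a|b))]_\ev \xrightarrow{\bar\phi} H(\GL(a|b)_{(n)}) \xrightarrow{\psi_{\GL(a|b)_{(n)}}} k[\bfV_{n;f,\mu}(\GL(a|b))]_\ev$ equals $q\circ F^n$, where $F^n$ is the $p^n$-th Frobenius and $q$ is the canonical surjection onto $k[\bfV_{n;f,\mu}(\GL(a|b))]_\ev$. The genuine work --- the construction of $\bar\phi$ with this property --- has already been carried out in~\cite{Drupieski/Kujawa:2019a}, which we take for granted; what remains is bookkeeping.

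For the first assertion, fix a homogeneous element $y$ of $k[\bfV_{n;f,\mu}(\GL(a|b)_{(n)})]$. If $y$ is odd, then supercommutativity together with $p\neq 2$ gives $y^2=0$, hence $y^{p^n}=0$ lies in the image. If $y$ is even, lift it to $\tilde y\in k[\bfV_n(\GL(a|b))]_\ev$ along $q$; then $\psi_{\GL(a|b)_{(n)}}(\bar\phi(\tilde y))=q(F^n(\tilde y))=q(\tilde y)^{p^n}=y^{p^n}$, the middle equality being the statement that $F^n$ is the $p^n$-power map, up to the harmless Frobenius twist of $k$. Since $\bar\phi(\tilde y)$ lies in $H^{*,0}(\GL(a|b)_{(n)})\subseteq H^{*,*}(\GL(a|b)_{(n)},k)$, this exhibits $y^{p^n}$ in the image of $\psi_{\GL(a|b)_{(n)}}$, for every choice of $(f,\mu)$.

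For injectivity on $K$-points I would first reduce from $\bfV_n$ to the subschemes $\bfV_{n;f,\mu}$. A $K$-point of $\bfV_n(\GL(a|b)_{(n)})$ is a homomorphism $\bbM_{n,K}\to\GL(a|b)_{(n),K}$ of supergroup schemes; since $\bbM_n=\varprojlim_{f,\mu}\bbM_{n;f,\mu}$ is profinite and the target is finite, such a homomorphism factors through a finite quotient $\bbM_{n;f,\mu,K}$, and as these quotients form a cofiltered system, any two given $K$-points can be arranged to factor through a common $\bfV_{n;f,\mu}(\GL(a|b)_{(n)})(K)$. So it suffices to show, for each $(f,\mu)$, that the map $\bfV_{n;f,\mu}(\GL(a|b)_{(n)})\to\Spec H^{*,*}(\GL(a|b)_{(n)},k)$ induced by $\psi_{\GL(a|b)_{(n)}}$ is injective on $K$-points. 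That is the elementary fact that a ring homomorphism $\varphi\colon A\to B$ with $B^{p^N}\subseteq\varphi(A)$ induces a map $\Spec B\to\Spec A$ which is injective on $K$-points for every field $K$: if $\beta_1,\beta_2\colon B\to K$ agree on $\varphi(A)$, then $\beta_1(b)^{p^N}=\beta_1(b^{p^N})=\beta_2(b^{p^N})=\beta_2(b)^{p^N}$ for all $b\in B$, and the $p^N$-power map on the field $K$ is injective, so $\beta_1=\beta_2$. One applies this with $A=H^{*,*}(\GL(a|b)_{(n)},k)$, $B=k[\bfV_{n;f,\mu}(\GL(a|b)_{(n)})]$ and $N=n$, the inclusion $B^{p^n}\subseteq\varphi(A)$ being exactly the first assertion (odd elements of $B$ contribute nothing, their $p^n$-th powers already being zero).

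The step I expect to be the main obstacle is precisely this reduction: one must check that the supergroup schemes $\bbM_{n;f,\mu}$ are cofinal among the finite quotients of $\bbM_n$, so that every $K$-point of $\bfV_n(G)$ genuinely lands in some $\bfV_{n;f,\mu}(G)$, and one must be able to choose $(f,\mu)$ uniformly for a pair of points. Everything else is either formal commutative algebra or is already packaged in the cited construction of $\bar\phi$, so the real difficulty of the theorem is inherited wholesale from the preceding work of Drupieski and Kujawa.
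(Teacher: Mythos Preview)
The paper does not prove this theorem at all: the sentence before the statement says it is ``pointed out in Theorem~6.2.3 of~\cite{Drupieski/Kujawa:2019a}'', and the \verb|\qed| at the end of the statement signals that it is being quoted, not established here.  So there is no argument in the paper to compare against; you are supplying details the authors deliberately outsourced.

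Your derivation of the first assertion from the displayed factorisation is correct and is exactly what the authors intend the reader to extract.  For the second assertion your reduction to the subschemes $\bfV_{n;f,\mu}$ is one route, and the cofinality issue you flag is genuine: for a non-unipotent target such as $\GL(a|b)_{(n)}$ the image of $s_n$ need not be nilpotent, so the argument used later in the paper (for unipotent $G$) that any map $\bbM_n\to G$ factors through some $\bbM_{n;m}$ does not apply.  Showing that the $\bbM_{n;f,\mu}$ exhaust the finite quotients through which such maps factor requires an argument about the Hopf structure that is not in this paper.

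A cleaner way around this, and likely what is actually done in \cite{Drupieski/Kujawa:2019a}, is to observe that the factorisation $\psi_{\GL(a|b)_{(n)}}\circ\bar\phi = q\circ F^n$ holds before passing to the quotient: the source of $\bar\phi$ is already $k[\bfV_n(\GL(a|b))]_\ev$ and $\psi_G$ lands in $k[\bfV_n(G)]$, so one has $\psi_{\GL(a|b)_{(n)}}\circ\bar\phi = F^n$ as an endomorphism of $k[\bfV_n(\GL(a|b)_{(n)})]_\ev$ itself.  Your $p^n$-th-power argument then gives injectivity of $\Psi_{\GL(a|b)_{(n)}}$ on $K$-points directly, with no cofinality step needed.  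Either way the substance lies in \cite{Drupieski/Kujawa:2019a}, as you say.
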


The following is the analogu  of~\cite[Theorem~5.2]{Bendel/Friedlander/Suslin:1997b}; see also~\cite[Proposition~3.8]{Friedlander/Pevtsova:2005a}.

\begin{theorem}\label{th:F-iso}
Let $G$ be connected finite supergroup scheme of height at most $n$.
Then for $m$ large enough, the image of the map 
\[ 
\psi_G\colon H^{*,*}(G,k) \to k[\bfV_{n;m}(G)] 
\]
contains the $p^n$th power of every element of $k[\bfV_{n;m}(G)]$. Consequently, the map $\Psi_G\colon \bfV_{n;m}(G) \to \Spec H^{*,*}(G,k)$ is injective.
\end{theorem}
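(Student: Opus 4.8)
The plan is to reduce the connected case to the case of $\GL(a|b)_{(n)}$ already handled in Theorem~\ref{th:VnGLabn}, by embedding $G$ into a general linear supergroup. Since $G$ is a connected finite supergroup scheme of height at most $n$, the group algebra $kG$ is finite dimensional, and the coaction of $k[G]$ on the regular representation gives a closed embedding $G \hookrightarrow \GL(a|b)$ for suitable $a,b$; because $G$ has height $\le n$, this factors through the Frobenius kernel, yielding a closed embedding $\iota\colon G \hookrightarrow \GL(a|b)_{(n)}$ of connected finite supergroup schemes. (Alternatively one embeds $G$ into $\GL$ of its coordinate ring and passes to Frobenius kernels; either way the key point is that $G$ sits inside a general linear Frobenius kernel of the same height.)

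First I would record the naturality of the whole package in the supergroup scheme. The formation of $\bfV_{n;m}(-) = \Hom_{\sgs/k}(\bbM_{n;m},-)$ is covariant, so $\iota$ induces $\iota_*\colon \bfV_{n;m}(G)\to \bfV_{n;m}(\GL(a|b)_{(n)})$; dually a surjection on coordinate rings $k[\bfV_{n;m}(\GL(a|b)_{(n)})]\twoheadrightarrow k[\bfV_{n;m}(G)]$, using that $\iota$ is a closed immersion and $\bbM_{n;m}$ is, in the relevant sense, ``projective'' for this Hom functor (this is exactly the surjectivity statement one extracts from \cite{Drupieski/Kujawa:2019a}). On the other side, restriction in cohomology gives $\res\colon H^{*,*}(\GL(a|b)_{(n)},k)\to H^{*,*}(G,k)$. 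The construction of $\psi_{(-)}$ in \S6.2 of \cite{Drupieski/Kujawa:2019a} is natural, so we obtain a commutative square relating $\psi_{\GL(a|b)_{(n)}}$ and $\psi_G$ via $\res$ and the restriction map on $\bfV_{n;m}$.

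Then the argument runs as follows. Take any $z \in k[\bfV_{n;m}(G)]$ and lift it to $\tilde z\in k[\bfV_{n;m}(\GL(a|b)_{(n)})]$ along the surjection. By Theorem~\ref{th:VnGLabn}, $\tilde z^{\,p^n} = \psi_{\GL(a|b)_{(n)}}(w)$ for some $w\in H^{*,*}(\GL(a|b)_{(n)},k)$. Chasing the commutative square, the image of $w$ under $\res$ maps via $\psi_G$ to the image of $\tilde z^{\,p^n}$ in $k[\bfV_{n;m}(G)]$, which is $z^{p^n}$. Hence the image of $\psi_G$ contains $z^{p^n}$ for every $z$. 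Finally, a ring map whose image contains the $p^n$th power of every element of the target is, after reducing both sides modulo nilpotents, surjective up to radical, and therefore the induced map on $\Spec$ is injective (it is radicial: $\Psi_G$ is injective on $K$-points because any two $K$-points of $\bfV_{n;m}(G)$ with the same image pull back $z$ and $z^{p^n}$ to the same value, forcing them to agree on a set of generators). This gives injectivity of $\Psi_G$, and the ``$m$ large enough'' qualification is inherited from whatever finiteness is needed to represent $z$ and to ensure the lift $\tilde z$ exists, i.e.\ from Theorem~\ref{th:VnGLabn} applied after the embedding.

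**The main obstacle** will be setting up the embedding $G\hookrightarrow \GL(a|b)_{(n)}$ compatibly with the constructions of Drupieski and Kujawa, and checking that $\psi$, $\bar\phi$, and the identification $\bfV_n(G)=\bfV_n(G_{(n)})$ are all natural enough that the diagram genuinely commutes — in particular that the surjection $k[\bfV_{n;m}(\GL(a|b)_{(n)})]\twoheadrightarrow k[\bfV_{n;m}(G)]$ really is surjective, which relies on a closed-immersion/lifting property of the representing schemes that must be pulled from \cite{Drupieski/Kujawa:2019a} rather than proved afresh. Once naturality is in hand, the passage from ``$p^n$th powers are hit'' to injectivity of $\Psi_G$ is a standard radicial-morphism argument.
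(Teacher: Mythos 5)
Your proposal is correct and follows the same route as the paper: embed $G$ into a Frobenius kernel $\GL(a|b)_{(n)}$, set up the commutative square relating $\psi_{\GL(a|b)_{(n)}}$ and $\psi_G$ via restriction and the closed immersion $\bfV_{n;m}(G)\hookrightarrow\bfV_{n;m}(\GL(a|b)_{(n)})$ from \cite[Theorem~3.3.6]{Drupieski/Kujawa:2019a}, and transport the ``$p^n$th powers are hit'' conclusion from Theorem~\ref{th:VnGLabn} down the surjective right-hand column. The only addition you make is spelling out the radicial argument behind ``Consequently,'' which the paper leaves implicit.
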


\begin{proof}
Choose an embedding $G \to \GL(a|b)_{(r)}$ for suitable $a$, $b$ and
$r$. Then we have the following diagram
\[ 
\xymatrix@C=1.7cm{
H^{*,*}(\GL(a|b)_{(n)},k) \ar[r]^{\psi_{\GL(a|b)_{(n)}}}\ar[d] & k[\bfV_{n;m}(\GL(a|b)_{(n)})]\ar[d] \\
H^{*,*}(G,k) \ar[r]^{\psi_G} & k[\bfV_{n;m}(G)]}
\] 
The upper horizontal map surjects onto $p^n$th powers 
by Theorem~\ref{th:VnGLabn}, and the 
right hand vertical map is surjective since $\bfV_{n;m}(G) \to \bfV_{n;m}(\GL(a|b)_{(n)})$ is 
a closed embedding by \cite[Theorem~3.3.6]{Drupieski/Kujawa:2019a}. It follows that the lower
horizontal map surjects onto $p^n$th powers. 
\end{proof}

For the algebra $A = k[t, \tau]/(t^p-\tau^2)$, we have a map 
\begin{equation} 
\label{eq:distinguished} 
e\colon A \to k\bbM_n\to k\bbM_{n;m} 
\end{equation} 
given by $e(\tau)=\sigma$,
$e(t)=s_n$.  Let $\bbM_n \to G_K$ be a homomorphism of supergroup  schemes. 
Since $G$ is unipotent, the group algebra $KG_K$ is finite dimensional and local. 
Hence, any element in the augmentation ideal is nilpotent. Therefore, the 
homomorphism factors through
some $\bbM_{n;m}$. 
We compose to get a $\pi$-point of $G$:
\[ 
e_G\colon A_K \to K\bbM_n \to K\bbM_{n;m} \to KG_K. 
\] 
Thus we have maps
\[ 
\bbP \Hom_{\sgs/k}(\bbM_{n;m},G)\hookrightarrow
\bbP \Hom_{\sgs/k}(\bbM_n,G) \to \Pi(G) \xrightarrow{\Phi_G} \Proj H^{*,*}(G,k), 
\] 
where we denote by $\bbP X$ the projectivization of a conical affine scheme $X$.  It follows from \cite[\S6.2]{Drupieski/Kujawa:2019a} that the composition 
$\bbP \Hom_{\sgs}(\bbM_{n;m},G) \to  \Proj H^{*,*}(G,k)$ is induced by the  map $\Psi_{G}$.

\begin{proposition}
\label{pr:endo} 
The distinguished $\pi$-point $e\colon A \to k\bbM_{n:m}$ induces a bijection 
\[
e^*\colon \Proj k[\bfV_{n;m}(\bbM_{n;m})]  \to \Pi(\bbM_{n;m}). 
\] 
\end{proposition}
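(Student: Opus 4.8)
The plan is to compare $e^*$ with the known bijection $\Phi_{\bbM_{n;m}}$. By \cite[\S6.2]{Drupieski/Kujawa:2019a} there is a commuting triangle
\[
\bbP\Hom_{\sgs/k}(\bbM_{n;m},\bbM_{n;m})\xrightarrow{\ e^*\ }\Pi(\bbM_{n;m})\xrightarrow{\ \Phi_{\bbM_{n;m}}\ }\Proj H^{*,*}(\bbM_{n;m},k),
\]
whose composite is the projectivisation $\bbP\bar\Psi$ of the map $\Spec k[\bfV_{n;m}(\bbM_{n;m})]\to\Spec H^{*,*}(\bbM_{n;m},k)$ induced by $\psi_{\bbM_{n;m}}$. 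Since $\bbM_{n;m}=E_{m,n}^-$ is an elementary supergroup scheme, $\Phi_{\bbM_{n;m}}$ is a bijection by \cite[Theorem~6.9]{Benson/Iyengar/Krause/Pevtsova:bikp7} (see also \cite[\S8]{Benson/Iyengar/Krause/Pevtsova:bikp7}). Hence $e^*$ is a bijection if and only if $\bbP\bar\Psi$ is, and the statement is reduced to one about $\bfV_{n;m}(\bbM_{n;m})$ and cohomology alone.

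For injectivity I would note that $\Psi_{\bbM_{n;m}}$ is injective on $K$-points for every extension field $K$: this is Theorem~\ref{th:F-iso} applied with $G=\bbM_{n;m}$, and if the given $m$ is not already large enough for that statement, it follows by embedding $\bbM_{n;m}$ into some $\GL(a|b)_{(r)}$ and combining Theorem~\ref{th:VnGLabn} with the closed embedding $\bfV_{n;m}(\bbM_{n;m})\hookrightarrow\bfV_{n;m}(\GL(a|b)_{(r)})$ of \cite[Theorem~3.3.6]{Drupieski/Kujawa:2019a}. Being injective on $K$-points, $\Psi_{\bbM_{n;m}}$ is radicial, hence injective on underlying spaces, and therefore so is $\bbP\bar\Psi$.

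Surjectivity is the real content. Theorem~\ref{th:F-iso} also gives that the image of $\psi_{\bbM_{n;m}}$ contains the $p^n$th power of every element of $k[\bfV_{n;m}(\bbM_{n;m})]$, so $k[\bfV_{n;m}(\bbM_{n;m})]$ is purely inseparable, in particular integral, over that image; consequently $\Spec k[\bfV_{n;m}(\bbM_{n;m})]\to\Spec H^{*,*}(\bbM_{n;m},k)$ is a homeomorphism onto the closed subscheme defined by $\ker\psi_{\bbM_{n;m}}$, and the point is to see that this kernel lies in the nilradical of $H^{*,*}(\bbM_{n;m},k)$. I would obtain this from a dimension count: $H^{*,*}(\bbM_{n;m},k)$ modulo nilpotents has $\Proj$ equal to $\bbP^n$, hence is irreducible of Krull dimension $n+1$, while $\bfV_{n;m}(\bbM_{n;m})=\Hom_{\sgs/k}(\bbM_{n;m},\bbM_{n;m})$ is, after unwinding the comultiplication \eqref{eq:KEmn-}, an $(n+1)$-dimensional cone, its supergroup endomorphisms being governed by Witt-vector/$p$-polynomial data on the $s_i$ together with a scalar on the primitive generator $\sigma$. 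Then the image of $\psi_{\bbM_{n;m}}$ has full dimension $n+1$ inside the $(n+1)$-dimensional irreducible ring $H^{*,*}(\bbM_{n;m},k)/(\text{nilpotents})$, so $\ker\psi_{\bbM_{n;m}}$ has height zero and is nilpotent, whence $\bbP\bar\Psi$ is onto.

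An equivalent and probably more transparent route to surjectivity, given the explicit formulas \eqref{eq:lambda}, is to argue directly with $e^*$: every class in $\Pi(\bbM_{n;m})$ is represented by a standard $\pi$-point $\alpha_\lambda\colon A_K\to K\bbM_{n;m,K}$ with $0\ne\lambda\in\bbA^{n+1}(K)$, and one writes down, for each such $\lambda$, a supergroup homomorphism $\phi_\lambda\colon\bbM_{n;m,K}\to\bbM_{n;m,K}$ with $\phi_{\lambda*}\comp e$ equivalent to $\alpha_\lambda$, the primitive $\sigma$ and the Witt-vector structure of $k\bbM_{n;m}$ making such $\phi_\lambda$ available. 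Either way, the main obstacle is surjectivity — matching the $(n+1)$-parameter family of standard $\pi$-points against the endomorphism scheme — and it rests on a concrete analysis of both the comultiplication on $k\bbM_{n;m}$ and the Drupieski and Kujawa map $\psi_{\bbM_{n;m}}$; injectivity, by contrast, is a formal consequence of the results quoted above.
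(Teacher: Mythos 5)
The paper's own proof is a one-line citation of Lemma~3.3.2 of Drupieski--Kujawa \cite{Drupieski/Kujawa:2019a}, which classifies supergroup homomorphisms out of $\bbM_{n;m}$ directly and yields the proposition as a parameterisation statement. Your proposal takes a genuinely different route: you factor $\Phi_{\bbM_{n;m}}\circ e^* = \bbP\Psi_{\bbM_{n;m}}$ and, using the external fact from \cite[Theorem~6.9]{Benson/Iyengar/Krause/Pevtsova:bikp7} that $\Phi_{\bbM_{n;m}}$ is bijective for elementary supergroups, transfer the problem to bijectivity of $\Psi_{\bbM_{n;m}}$ on $\Proj$. This is a legitimate reduction, and you are careful to cite $\Phi$-bijectivity from the elementary-case source rather than from Corollary~\ref{co:connected} (which would be circular, since that corollary depends on the present proposition via Corollary~\ref{co:surjective}). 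Your injectivity step is fine, and in fact slightly circuitous: once $\psi_{\bbM_{n;m}}$ has image containing $p^n$th powers, $k[\bfV_{n;m}(\bbM_{n;m})]$ is purely inseparable over that image, so $\Psi_{\bbM_{n;m}}$ is already a homeomorphism onto the closed set $V(\ker\psi_{\bbM_{n;m}})$; there is no need to pass to $K$-points and radiciality.

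The genuine gap is in the surjectivity half, and it sits exactly where the paper's citation does the work. Your dimension-count route needs the unproved input that $k[\bfV_{n;m}(\bbM_{n;m})]$ has Krull dimension $n+1$, i.e.\ that the scheme of supergroup endomorphisms of $E^-_{m,n}$ is an $(n+1)$-dimensional cone; this requires analysing the constraints imposed by the Witt-vector comultiplication in \eqref{eq:KEmn-}, which is precisely the content of Lemma~3.3.2. Likewise, your alternative ``more transparent'' route --- producing an endomorphism $\phi_\lambda$ for each standard $\pi$-point $\alpha_\lambda$ of \eqref{eq:lambda} --- is a direct parametrisation of $\Hom_{\sgs}(\bbM_{n;m},\bbM_{n;m})$, again the content of the cited lemma. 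You identify the right bottleneck but do not close it: the proposal replaces one citation by a cohomological scaffold that still hinges on the same concrete computation about the endomorphism scheme, without actually carrying it out.
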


\begin{proof}
This follows from Lemma~3.3.2 in \cite{Drupieski/Kujawa:2019a}.
\end{proof}
\begin{remark}
Proposition~\ref{pr:endo} is effectively saying that any equivalence class of 
$\pi$-points of $\bbM_{n;m}$ has a unique (up to a scalar) representative of the form 
$\phi \circ e$ where $\phi$ is an endomorphism of the supergroup scheme  $\bbM_{n;m}$. 

This observation almost immediately extends to any finite connected unipotent supergroup scheme $G$. 
\end{remark} 

\begin{corollary} 
\label{co:surjective} 
For any finite connected unipotent supergroup scheme $G$ of height $n$, the $\pi$-point $e_G\colon A \to kG$ induces a surjection
\[
e_G^*\colon \Proj k[\bfV_{n}(G)]  \to \Pi(G). 
\]
\end{corollary}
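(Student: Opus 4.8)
The plan is to reduce the statement about a general finite connected unipotent supergroup scheme $G$ of height $n$ to the corresponding statement for $\bbM_{n;m}$, which is handled by Proposition~\ref{pr:endo}, and then to use the surjectivity half of Theorem~\ref{th:super-elem-detect} to see that no equivalence classes are missed. First I would recall that $\bfV_n(G) = \Hom_{\sgs/k}(\bbM_n, G)$, and since $G$ is unipotent of height $n$, every homomorphism $\bbM_{n,K} \to G_K$ factors through some $\bbM_{n;m}$ (as explained just before the Proposition, using that $KG_K$ is finite-dimensional local so augmentation-ideal elements are nilpotent). Composing with the distinguished map $e\colon A \to k\bbM_n \to k\bbM_{n;m}$ gives, for each $K$-point $\phi$ of $\bfV_n(G)$, a $\pi$-point $\phi \circ e_G$ of $G$; after projectivizing, this is exactly the map $e_G^*$ in the statement. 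So the content is that this assignment hits every equivalence class in $\Pi(G)$.

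Next I would argue surjectivity. Let $\alpha\colon A_K \to KE \subseteq KG_K$ be a $\pi$-point, with $E$ an elementary sub-supergroup scheme of $G_K$. By Theorem~\ref{th:elemclass} $E$ is (a quotient of) one of the listed elementary supergroup schemes, and in the unipotent case one of types I, II, or III from Section~\ref{se:elem}; in all cases $E$ is a quotient of some $\bbM_{n';m'}$, and since $E$ embeds in $G_K$ with $G$ of height $n$ we may take $n' = n$. Using Proposition~\ref{pr:endo} applied to $\bbM_{n;m}$ (after enlarging $m$ if necessary so that both $\alpha$ factors appropriately and the height bound is met), the restriction of $\alpha$ to $KE$, pulled back along the quotient $\bbM_{n;m} \twoheadrightarrow E$, is equivalent to $\psi \circ e$ for some endomorphism $\psi$ of $\bbM_{n;m}$ defined over $K$. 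Composing $\psi$ with the inclusion $E \hookrightarrow G_K$ (through which the original $\alpha$ factors) produces a homomorphism $\bbM_{n;m,K} \to G_K$, i.e.\ a $K$-point of $\bfV_{n}(G)$, whose image under $e_G^*$ is the class of $\alpha$. Thus $e_G^*$ is surjective.

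The main obstacle I anticipate is bookkeeping the passage from "$\pi$-point valued in some elementary $E$ over some extension field" to "$K$-point of $\bfV_n(G)$ factoring through a single $\bbM_{n;m}$": one must match the height parameter $n$ (controlled by the hypothesis that $\height G \le n$, so that all relevant Frobenius kernels and all factoring occur at level $\le n$), absorb the exponent $m$ uniformly (only finitely many quotient types arise, and one can take $m$ large), and verify that pulling back along the quotient $\bbM_{n;m} \twoheadrightarrow E$ does not change the equivalence class of the $\pi$-point — this last point is exactly where Proposition~\ref{pr:endo} and its interpretation via endomorphisms of $\bbM_{n;m}$ (the Remark) are used, together with the fact that restriction along a quotient map of supergroup schemes sends modules to modules compatibly with finiteness of flat dimension. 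I would also invoke the commuting triangle displayed before the Proposition, that $\bbP\Hom_{\sgs}(\bbM_{n;m},G) \to \Proj H^{*,*}(G,k)$ is induced by $\Psi_G$, to keep track of the prime ideals $\fp(\alpha)$ and confirm everything is compatible with $\Phi_G$. Once these identifications are in place, surjectivity of $e_G^*$ is immediate, and that is all the corollary asserts.
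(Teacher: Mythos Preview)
Your overall strategy---factor a given $\pi$-point through an elementary sub-supergroup scheme $E$ of $G_K$, invoke Proposition~\ref{pr:endo} to rewrite it as (something)${}\circ e$, then compose with the embedding of $E$ into $G_K$ to produce a point of $\bfV_n(G)$---is exactly the paper's approach. The paper simply treats the principal case $E=E_{m,n}^- = \bbM_{n;m}$, where Proposition~\ref{pr:endo} applies verbatim and the needed homomorphism into $G_K$ is just $i\circ\phi$ with $\phi$ an endomorphism of $E_{m,n}^-$ and $i\colon E_{m,n}^- \hookrightarrow G_K$ the inclusion; it then declares the remaining connected elementary types similar.

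Where your write-up goes wrong is the attempted unification via ``$E$ is a quotient of $\bbM_{n;m}$.'' A quotient $q\colon \bbM_{n;m}\twoheadrightarrow E$ induces a surjection $K\bbM_{n;m}\twoheadrightarrow KE$ on group algebras, so a $\pi$-point $\alpha'\colon A_K\to KE$ cannot be ``pulled back'' along $q$; at best one must choose a lift $\tilde\alpha'\colon A_K\to K\bbM_{n;m}$, check it has finite flat dimension, and then argue that the equivalence $\tilde\alpha'\sim \psi\circ e$ in $\Pi(\bbM_{n;m})$ descends through $q$ to an equivalence in $\Pi(E)$. None of this is addressed. Relatedly, ``composing $\psi$ with the inclusion $E\hookrightarrow G_K$'' does not type-check: $\psi$ is an endomorphism of $\bbM_{n;m}$, not a map into $E$, so what you would actually need is $i\circ q\circ\psi\colon \bbM_{n;m}\to G_K$. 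The paper sidesteps all of this by working directly in $E=E_{m,n}^-$, where no quotient or lift is needed; I recommend you do the same and then, if you want to spell out the ``similar'' cases, handle $\bbG_{a(n')}$, $\bbG_{a(n')}\times\bbG_a^-$, and $E_{m,n',\mu}^-$ individually.
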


\begin{proof} 
Let $\alpha\colon A_K \to KG_K$ be a $\pi$-point. Then $\alpha$ factors through  an elementary supergroup $i\colon E \hookrightarrow G_K$. We consider the case $E = E_{m,n}^-$, $m \geq 1$, the other three cases are similar.  

We factor $\alpha = \iota \circ \alpha^\prime\colon A_K \to KE_{m,n}^- \to  KG_K$.  By Proposition~\ref{pr:endo} there exists an endomorphism  $\phi\colon E_{m,n}^- \to E_{m,n}^-$ such that $\alpha^\prime$ is equivalent to  $\phi \circ e\colon A_K \to E_{m,n}^-$ as a $\pi$-point of $E_{m,n}^-$.  Hence, $\alpha$  is equivalent to $i \circ \phi \circ e = e_G^*(i \circ \phi)$ as a $\pi$-point of $G$. 
Hence   $e_G^*$ is surjective. 
\end{proof} 

\begin{corollary}
\label{co:connected} 
For $G$ a finite connected  unipotent supergroup scheme, the map $\Phi_G\colon \Pi(G) \to \Proj H^{*,*}(G,k)$ is injective. 
\end{corollary}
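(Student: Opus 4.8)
The plan is to read off injectivity from the diagram of maps assembled just before the statement, which links $\Phi_G$, the map $e_G^*$ of Corollary~\ref{co:surjective}, and the map on projective spectra induced by $\Psi_G$. Write $n$ for the height of $G$. First I would fix an integer $m$ large enough for two purposes at once: large enough that Theorem~\ref{th:F-iso} applies, so that $\Psi_G\colon \bfV_{n;m}(G)\to\Spec H^{*,*}(G,k)$ is injective; and large enough that every homomorphism of supergroup schemes $\bbM_{n,K}\to G_K$, over every extension field $K$ of $k$, factors through $\bbM_{n;m,K}$. The second condition can be met uniformly in $K$ precisely because $G$ is unipotent: the augmentation ideal of $kG$ is nilpotent, of index $N$ say, this bound is inherited by $KG_K$ for every $K$, and the image of $s_n$ always lies in the augmentation ideal, so $s_n^{p^m}$ maps to zero as soon as $p^m\ge N$; we take $m$ at least this large and also as large as Theorem~\ref{th:F-iso} requires.

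With $m$ so chosen I would record two properties of the composite
\[
\Proj k[\bfV_{n;m}(G)] \xrightarrow{\ e_G^*\ } \Pi(G) \xrightarrow{\ \Phi_G\ } \Proj H^{*,*}(G,k).
\]
By the discussion of \cite[\S6.2]{Drupieski/Kujawa:2019a} recalled before the statement, this composite is the map $\overline{\Psi}_G$ on projective spectra induced by $\Psi_G$, and so it is injective by the choice of $m$. On the other hand $e_G^*$ itself is surjective: Corollary~\ref{co:surjective} gives that $e_G^*\colon\Proj k[\bfV_n(G)]\to\Pi(G)$ is onto, and — by unipotence and the uniform bound fixed above — every $K$-point of $\bfV_n(G)$ factors through $\bbM_{n;m,K}$ and then induces the same $\pi$-point as the corresponding point of $\bfV_{n;m}(G)$; hence restricting the domain of $e_G^*$ along the inclusion $\Proj k[\bfV_{n;m}(G)]\hookrightarrow\Proj k[\bfV_n(G)]$ does not change its image.

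The conclusion is then purely formal. If $a,b\in\Proj k[\bfV_{n;m}(G)]$ satisfy $e_G^*(a)=e_G^*(b)$, then applying $\Phi_G$ gives $\overline{\Psi}_G(a)=\overline{\Psi}_G(b)$, so $a=b$ by injectivity of $\overline{\Psi}_G$; thus $e_G^*$ is a bijection and $\Phi_G=\overline{\Psi}_G\circ(e_G^*)^{-1}$ is injective. (Combined with the surjectivity of $\Phi_G$ noted at the start of this section, this shows $\Phi_G$ is a bijection for connected unipotent $G$.) The one step that genuinely needs attention is the choice of $m$: one must verify that a single $m$ can serve both Theorem~\ref{th:F-iso} and the factorisation through $\bbM_{n;m}$ of all the $\pi$-points in the image of $e_G^*$, and it is exactly for the latter — via the uniform nilpotency of the augmentation ideal of $kG$ over all extension fields — that the unipotence hypothesis on $G$ enters.
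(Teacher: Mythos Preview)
Your proof is correct and follows the same route as the paper: factor the composite $\Proj k[\bfV_{n;m}(G)]\xrightarrow{e_G^*}\Pi(G)\xrightarrow{\Phi_G}\Proj H^{*,*}(G,k)$ as the injective $\overline{\Psi}_G$, note $e_G^*$ is surjective via Corollary~\ref{co:surjective}, and conclude formally. You are in fact more careful than the paper on two points it leaves implicit --- the uniform choice of $m$ over all extension fields via nilpotence of the augmentation ideal, and the passage from $\bfV_n(G)$ to $\bfV_{n;m}(G)$ without losing surjectivity of $e_G^*$ --- so your write-up is, if anything, a slight improvement.
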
 

\begin{proof} 
Let $G$ be of height $n$ and choose large enough $m$ such  that any map $\bbM_n \to G$ factors through $\bbM_{n:m}$. Then the 
composition 
\[
\xymatrix@C=1.7cm{\Psi_G\colon \Proj k[\bfV_{n:m}(G)] \ar[r]^-{e^*_G} &  \Pi(G) \ar[r]^-{\Phi_G} & \Proj H^{*,*}(G,k) }
\] 
is bijective by Theorem~\ref{th:F-iso} and the first map is surjective by 
Corollary~\ref{co:surjective}. Hence, $\Phi_G$ is injective.  
\end{proof} 

To prove the main result, Theorem~\ref{th:Phi-bijective}, of this section we wish to extend  Corollary~\ref{co:connected} to all---not necessarily connected---unipotent finite supergroup schemes. The strategy for the  reduction from the general to the connected case   closely follows \cite{Friedlander/Pevtsova:2005a}. We outline  the key steps of the argument citing the proofs from  \cite{Friedlander/Pevtsova:2005a} where they apply verbatim. 

Let $\mathsf E$ be an elementary abelian $p$-group, which we can view as a 
supergroup scheme concentrated in even degree. Denote by
${\bf \sigma_{\mathsf E}} \in H^*(\mathsf E,k)$ the cohomology class defined as
\[
\sigma_\mathsf E = \prod\limits_{0\not = \xi \in H^1(\mathsf E,\mathbb F_p)} \beta(\xi),
\]
where $\beta$ denotes the Bockstein homomorphism.  The key property of $\sigma_\mathsf E$ 
is that as a function on $\Proj H^*(\mathsf E,k)$ it vanishes on any subvariety 
$\Proj H^*(\mathsf E^\prime, k) \subset \Proj H^*(\mathsf E,k)$ for a proper subgroup $\mathsf E^\prime$ of  $\mathsf E$. 

For $G$ a finite supergroup scheme with a group of connected components $\pi_0(G) = \mathsf E$ 
an elementary abelian $p$-group, we have a natural projection $\phi\colon G \to \mathsf E$.  We set 
\[ 
\sigma_G = \phi^*(\sigma_\mathsf E) \in H^{*,0}(G, k). 
\] 

The proof of the following result relies on the construction of the \emph{Evens norm}~\cite{Evens:1961a} and goes exactly as in \cite[4.4]{Friedlander/Pevtsova:2005a}.

\begin{proposition}
\label{pr:evens} 
Let $G = G^0 \rtimes \pi$ be a finite supergroup scheme with group of connected components $\pi$.  Let $\mathsf E$ be an elementary abelian $p$-subgroup of $\pi$.  Then we have an injective map 
\[
\Spec (H^{*,*}(G^0\rtimes \mathsf E,k)[\sigma_{G^0\rtimes \mathsf E}^{-1}])/N_\pi(\mathsf E) \to \Spec H^{*,*}(G,k) 
\]
induced by the embedding of supergroups $G^0\rtimes \mathsf E \hookrightarrow G$. \qed
\end{proposition}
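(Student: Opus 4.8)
The plan is to follow Quillen's stratification argument, in the form carried out in \cite[4.4]{Friedlander/Pevtsova:2005a}; the three ingredients are the conjugation action of $N_\pi(\mathsf E)$ on the cohomology of $G^0\rtimes\mathsf E$, the Evens norm together with its double coset formula, and the vanishing of $\sigma_\mathsf E$ on proper subgroups recalled above. First I would identify the map. Since $G^0$ is normal in $G$ and $N_\pi(\mathsf E)$ normalises $\mathsf E$, conjugation by elements of $N_\pi(\mathsf E)$ preserves $G^0\rtimes\mathsf E$ and so acts on $H^{*,*}(G^0\rtimes\mathsf E,k)$; the class $\sigma_{G^0\rtimes\mathsf E}=\phi^*(\sigma_\mathsf E)$, with $\phi\colon G^0\rtimes\mathsf E\to\mathsf E$ the canonical projection, is fixed by this action because $\sigma_\mathsf E=\prod_{0\ne\xi}\beta(\xi)$ is invariant under $\Aut(\mathsf E)$. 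Hence $N_\pi(\mathsf E)$ acts on $H^{*,*}(G^0\rtimes\mathsf E,k)[\sigma_{G^0\rtimes\mathsf E}^{-1}]$, whose ring of invariants is the coordinate ring of the quotient scheme in the statement. On the other hand conjugation by any element of $\pi\subseteq G$ is inner in $G$, so $\res^G_{G^0\rtimes\mathsf E}\colon H^{*,*}(G,k)\to H^{*,*}(G^0\rtimes\mathsf E,k)$ takes values in the $N_\pi(\mathsf E)$-invariants; composing with the localisation map yields a ring homomorphism
\[
H^{*,*}(G,k)\lra \bigl(H^{*,*}(G^0\rtimes\mathsf E,k)[\sigma_{G^0\rtimes\mathsf E}^{-1}]\bigr)^{N_\pi(\mathsf E)}
\]
whose associated morphism of spectra is the map in the statement, so it remains to prove that this morphism is injective.

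Next I would bring in the Evens norm. Write $H\colonequals G^0\rtimes\mathsf E$. The inclusion $kH\subseteq kG$ of finite-dimensional cocommutative Hopf superalgebras has finite index $[\pi:\mathsf E]$, so Evens' multiplicative norm supplies a map $\operatorname{Nm}^G_H\colon H^{*,*}(H,k)\to H^{*,*}(G,k)$ together with the double coset formula
\[
\res^G_H\operatorname{Nm}^G_H(x)=\prod_{g\in\mathsf E\backslash\pi/\mathsf E}\operatorname{Nm}^{H}_{H\cap{}^gH}\res^{{}^gH}_{H\cap{}^gH}\bigl({}^gx\bigr),
\]
in which $H\cap{}^gH=G^0\rtimes(\mathsf E\cap{}^g\mathsf E)$ and the $g=e$ term is $x$ itself. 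Applying this to the generic class $x+t$ exhibits every $x\in H^{*,*}(H,k)$ as a root of a monic polynomial over the image of $\res^G_H$; combined with the finite generation of $H^{*,*}(H,k)$ as a $k$-algebra, this shows $H^{*,*}(H,k)$ is module-finite over $\operatorname{im}(\res^G_H)$. This is the finiteness input that Quillen's argument requires.

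Then I would run the separation argument. Since $\Spec$ of the invariant ring is the quotient of $\Spec H^{*,*}(H,k)[\sigma_H^{-1}]$ by $N_\pi(\mathsf E)$, with $N_\pi(\mathsf E)$-orbits as its fibres, it is enough to show that any two primes $\fq_1,\fq_2$ of $H^{*,*}(H,k)[\sigma_H^{-1}]$ with the same image in $\Spec H^{*,*}(G,k)$ are conjugate under $N_\pi(\mathsf E)$. By the definition of $\sigma_\mathsf E$ as a product of Bocksteins, its restriction to $H^*(\mathsf E',k)$, and hence the restriction of $\sigma_H$ to $H^{*,*}(G^0\rtimes\mathsf E',k)$, vanishes for every proper subgroup $\mathsf E'<\mathsf E$. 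Because $\sigma_H\notin\fq_i$, this vanishing — together with module-finiteness from the Evens norm and the nilpotence-detection theorem (Theorem~\ref{th:super-elem-detect}(i)) — forces any identification of $\fq_1$ with $\fq_2$ inside $\Spec H^{*,*}(G,k)$ to be realised by conjugations that preserve $\mathsf E$, i.e.\ by elements of $N_\pi(\mathsf E)$: this is precisely Quillen's argument, which at this point transcribes verbatim from \cite[4.4]{Friedlander/Pevtsova:2005a}. Hence $\fq_2={}^g\fq_1$ for some $g\in N_\pi(\mathsf E)$, and the map on the quotient is injective.

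The step I expect to require the most care is setting up the Evens norm and its double coset formula in the $\bbZ/2$-graded setting. Here $kH\subseteq kG$ is a pair of cocommutative Hopf superalgebras rather than group rings of finite groups, so one needs the Hopf-algebraic multiplicative transfer, with the signs of the super-symmetric braiding tracked through the construction — these are harmless for the classes involved, which all lie in degrees of the form $(*,0)$ — and the double coset combinatorics $\mathsf E\backslash\pi/\mathsf E$ must be organised so that the normal connected part $G^0$ is carried along while the index is accounted for by the component group $\pi$. Once this and the detection result (Theorem~\ref{th:super-elem-detect}, from \cite{Benson/Iyengar/Krause/Pevtsova:bikp5}) are in hand, the rest of the argument follows \cite[4.4]{Friedlander/Pevtsova:2005a} without change.
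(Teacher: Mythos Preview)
Your proposal is correct and takes the same approach as the paper, which offers no proof beyond the sentence that the argument ``relies on the construction of the Evens norm and goes exactly as in \cite[4.4]{Friedlander/Pevtsova:2005a}''; you have unpacked exactly that Quillen-style stratification argument. One small point: your appeal to Theorem~\ref{th:super-elem-detect}(i) is superfluous here and, as stated, would import a unipotent hypothesis that the proposition does not carry --- the Evens norm double-coset formula together with the vanishing of $\sigma_{\mathsf E}$ on proper subgroups already kills the non-$N_\pi(\mathsf E)$ terms after inverting $\sigma_H$, which is all the separation step requires.
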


\begin{proposition}
\label{pr:invariants} \cite[4.5]{Friedlander/Pevtsova:2005a}
Let $G=G^0 \rtimes \pi$ and let $(G^0)^\pi$ be the sub-supergroup
scheme of invariants of the connected component $G^0$ under the
action of $\pi$.  Then the  image of the map in cohomology induced by the
natural embedding of group schemes $(G^0)^\pi \times \pi
\hookrightarrow G^0 \rtimes \pi = G$,
\begin{equation}
\label{surj} H^{*,*}(G,k) \to H^{*,*}((G^0)^\pi \times \pi,k),
\end{equation}
contains the $p^{|\pi|}$th power of every element. \qed
\end{proposition}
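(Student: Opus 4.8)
The plan is to run the argument of \cite[4.5]{Friedlander/Pevtsova:2005a} in the super setting, where the $\bbZ/2$-grading causes no essential difficulty: modulo the classes of bidegree (even,odd) or (odd,even), which square to zero and hence are nilpotent, one is dealing with ordinary graded-commutative cohomology rings (see the discussion around \ref{def:ProjH**,Hbullet}), and the statement is the evident analogue of the finite group scheme case. Write $\pi$ for the group of components, $G=G^0\rtimes\pi$, and $H=(G^0)^\pi\times\pi$. Since $k$ is a field, Künneth gives $H^{*,*}(H,k)\cong H^{*,*}((G^0)^\pi,k)\otimes_k H^{*,*}(\pi,k)$, and as $p\ge 3$ the Frobenius $z\mapsto z^{p^N}$ is additive (the only monomials surviving $p^N$-th powering involve even-degree classes, which commute without sign). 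Since the image $\mcI$ of a ring map is a subring closed under products, it therefore suffices to arrange that, for some fixed $p$-power $N$, the image of $\res^G_H\colon H^{*,*}(G,k)\to H^{*,*}(H,k)$ contains the $p^N$-th power of each class of the form $x\otimes 1$ and $1\otimes y$; one then multiplies and applies Frobenius.

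The classes $1\otimes y$ are immediate: the projection $G\to\pi$ is split by the inclusion $\pi\hookrightarrow G$, and $H\hookrightarrow G\to\pi$ is the projection of $H$ onto $\pi$, so pulling $y$ back along $G\to\pi$ and restricting to $H$ returns $1\otimes y$. For the classes $x\otimes 1$ I would invoke the Evens norm \cite{Evens:1961a}, the same tool that underlies Proposition~\ref{pr:evens}. Here $G^0$ is normal in $G$ of index $|\pi|$; the norm $N^G_{G^0}\colon H^{*,*}(G^0,k)\to H^{*,*}(G,k)$ is defined, and since $G^0$ is normal the double-coset (Mackey) formula degenerates to $\res^G_{G^0}\comp N^G_{G^0}(w)=\prod_{g\in\pi}{}^{g}w$. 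Restricting further along $(G^0)^\pi\hookrightarrow G^0$ and using that, by definition of the fixed-point sub-supergroup scheme, every conjugation automorphism of $G^0$ restricts to the identity on $(G^0)^\pi$, each factor becomes $\res_{(G^0)^\pi}(w)$, so the product collapses to $\bigl(\res_{(G^0)^\pi}w\bigr)^{|\pi|}$. Hence $\mcI$, viewed via the further restriction to $(G^0)^\pi$, contains the $|\pi|$-th power of every class restricted from $G^0$.

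To turn this into ``$p^N$-th power of every class on $(G^0)^\pi\times\pi$'' one still needs two things: that restriction $H^{*,*}(G^0,k)\to H^{*,*}((G^0)^\pi,k)$ is surjective up to a Frobenius twist, and that the resulting classes on $(G^0)^\pi$ can be promoted to classes of the form $x^{p^N}\otimes 1$ on $H$. The first follows from the $\bfV_n$-picture: $(G^0)^\pi\hookrightarrow G^0$ is a closed embedding of connected supergroup schemes, so $\bfV_{n;m}((G^0)^\pi)\hookrightarrow\bfV_{n;m}(G^0)$ is a closed embedding by \cite[Theorem~3.3.6]{Drupieski/Kujawa:2019a}, whence $k[\bfV_{n;m}(G^0)]\twoheadrightarrow k[\bfV_{n;m}((G^0)^\pi)]$; feeding this through the maps $\psi$ and the Frobenius-surjectivity of Theorem~\ref{th:F-iso} --- exactly as in the proof of that theorem --- shows the image of $\res$ contains all sufficiently high Frobenius powers. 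The second point, together with the bookkeeping of exponents that produces the clean constant $p^{|\pi|}$, is where I expect the main work to lie: after restriction to $(G^0)^\pi\times\pi$ the norm classes carry lower-order terms involving the augmentation ideal of $H^{*,*}(\pi,k)$, and absorbing these --- using that $H^{*,*}(G,k)$ is Noetherian, so its nilradical is nilpotent, together with the splitting $H^{*,*}(\pi,k)\hookrightarrow H^{*,*}(G,k)$ and the surjectivity-up-to-Frobenius onto $H^{*,*}(G^0,k)$ --- is the delicate step. The super structure itself contributes nothing beyond checking that the Evens norm and the double-coset formula transfer verbatim, which is routine since everything in sight is graded-commutative modulo nilpotents.
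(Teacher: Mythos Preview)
The paper offers no proof of this proposition: it is stated with a terminal \qed\ and a bare citation to \cite[4.5]{Friedlander/Pevtsova:2005a}, signalling that the finite-group-scheme argument carries over verbatim to the super setting. Your sketch is a faithful reconstruction of that argument---Evens norm along the \'etale quotient $G\to\pi$, Mackey's formula collapsing on the normal subgroup $G^0$, K\"unneth on the target---and the steps you flag as delicate (absorbing the norm's lower-order terms in the augmentation ideal of $H^{*,*}(\pi,k)$, and the exponent bookkeeping yielding the clean constant $p^{|\pi|}$) are precisely where the work sits in the original as well.

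One small correction: your route to Frobenius-surjectivity of $\res\colon H^{*,*}(G^0,k)\to H^{*,*}((G^0)^\pi,k)$ via the commutative square of $\psi$-maps is not quite ``exactly as in the proof of Theorem~\ref{th:F-iso}.'' That proof only chases surjectivity-onto-$p^n$th-powers around the square to reach $k[\bfV_{n;m}(G)]$; here you are trying to come back from $k[\bfV_{n;m}]$ to cohomology, which also requires that $\ker\psi_{(G^0)^\pi}$ consist of nilpotents, so that agreement in $k[\bfV_{n;m}((G^0)^\pi)]$ forces agreement in $H^{*,*}((G^0)^\pi,k)$ modulo nilpotents, whence a further Frobenius power lands in the image of $\res$. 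This nilpotence of the kernel is available from the same Drupieski--Kujawa package (it is the content of $\Psi$ being injective on points), but it is an extra ingredient beyond the square-chase and worth making explicit.
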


\begin{corollary} 
\label{co:invariants}  
Let $G = G^0 \rtimes \mathsf E$ be a finite supergroup scheme with group of connected components $\mathsf E$. 
The natural map 
\[ 
\Proj H^{*,*}((G^0)^\pi \times \mathsf E, k) \to \Proj H^{*,*}(G,k) 
\] 
is an embedding of topological spaces. \qed
\end{corollary}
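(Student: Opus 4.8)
The plan is to deduce the statement formally from Proposition~\ref{pr:invariants}, via the standard mechanism by which a ring map whose image contains all $p^{N}$-th powers induces a homeomorphism on (projective) spectra; this is how the analogous step is handled in \cite{Friedlander/Pevtsova:2005a}. Write $\pi=\mathsf E$ and set $A\colonequals H^{*,*}(G,k)$ and $B\colonequals H^{*,*}((G^0)^{\pi}\times\mathsf E,k)$, with $\psi\colon A\to B$ the degree-preserving homomorphism induced by the embedding $(G^0)^{\pi}\times\pi\hookrightarrow G$. Since $\Spec$ and $\Proj$ are insensitive to nilpotents, and since modulo nilpotents the $\bbZ\times\bbZ/2$-grading collapses to the single $\bbZ$-grading of \S\ref{se:schemes} and both rings become strictly commutative finitely generated graded $k$-algebras (Friedlander--Suslin \cite{Friedlander/Suslin:1997a}), I would first pass to reductions and argue there.

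Next I would extract the key fact from Proposition~\ref{pr:invariants}: with $N\colonequals|\pi|$, every $b\in B$ satisfies $b^{p^{N}}\in\psi(A)$. Thus each $b$ is integral over $\psi(A)$ via $X^{p^{N}}-b^{p^{N}}=0$, so $B$ is integral over $\psi(A)$, indeed finite over it since $B$ is a finitely generated $k$-algebra; hence $\Spec B\to\Spec\psi(A)$ is closed and surjective. It is also injective, because a prime $\fq\subseteq B$ is recovered from its contraction via $b\in\fq \Leftrightarrow b^{p^{N}}\in\fq\cap\psi(A)$. So $\Spec B\to\Spec\psi(A)$ is a homeomorphism.

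It then remains to pass to the graded and then the projective setting. The homeomorphism restricts to homogeneous spectra: the unique prime $\fq$ of $B$ lying over a homogeneous prime $\fp$ of $\psi(A)$ equals its homogenization $\fq^{*}$, since $\fq^{*}\cap\psi(A)=\fp$ and contractions are injective. The irrelevant ideals also correspond: for homogeneous $\fq\subseteq B$ one has $B_{+}\subseteq\fq$ iff $\psi(A)_{+}\subseteq\fq\cap\psi(A)$, using again that $b^{p^{N}}\in\psi(A)_{+}$ whenever $b\in B_{+}$ is homogeneous. Deleting these loci yields a homeomorphism $\Proj B\xrightarrow{\ \sim\ }\Proj\psi(A)$. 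Finally $\psi(A)=A/\ker\psi$ is a graded quotient, so $\Proj\psi(A)=V_{+}(\ker\psi)\hookrightarrow\Proj A$ is a closed embedding; composing, $\Proj B\to\Proj A$ is a homeomorphism onto the closed subset $V_{+}(\ker\psi)$, in particular an embedding of topological spaces.

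I do not anticipate a genuine obstacle: all the mathematical content sits in Proposition~\ref{pr:invariants}, and the rest is formal. The only steps needing care are the bookkeeping with homogeneous primes and irrelevant ideals above, and the reduction that legitimately discards nilpotents and the $\bbZ/2$-part of the grading---both of which are already set up in \S\ref{se:schemes}.
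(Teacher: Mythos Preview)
Your proposal is correct and is exactly the intended argument: the paper states the corollary with a bare \qed, treating it as an immediate consequence of Proposition~\ref{pr:invariants} via the standard fact that a graded map of Noetherian graded rings whose image contains all $p^N$th powers is a universal homeomorphism on $\Proj$. You have simply written out those routine details, and they are all fine.
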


The final preparatory result that we need underlines the connection between rank and support varieties.  
Let $\alpha\colon A \to kG$ be a $k$-rational $\pi$-point and $\fp(\alpha)$ be the corresponding homogeneous 
prime ideal in $H(G,k)$. Since $\alpha$ is defined over $k$, $\fp(\alpha)$ defines a closed point on $\Proj H(G,k)$
and we can localise $H^{*,*}(G,M)$ at $\fp(\alpha)$. 
 
\begin{lemma}  
Let $G$ be a unipotent finite supergroup scheme defined over an algebraically closed field $k$, and $M$ a finite-dimensional $kG$-module. If $\alpha$ is a $k$-rational $\pi$-point, then $\alpha^*(M)$ has finite flat dimension if and only if $H^{*,*}(G,M)_{\fp(\alpha)} = 0$. 
\label{le:connection} 
\end{lemma}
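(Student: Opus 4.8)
The plan is to translate each side of the equivalence into a statement about the elementary sub-supergroup scheme through which $\alpha$ factors, and there to invoke the rank-variety description from the previous section. First I would reformulate the two conditions. On the right-hand side: since $M$ is finite-dimensional, $H^{*,*}(G,M)$ is finitely generated over the Noetherian ring $H^{*,*}(G,k)$, and, $G$ being unipotent, Remark~\ref{rk:classical} identifies $\supp_G(M)$ with the closed subset of $\Proj H(G,k)$ defined by $\Ann_{H(G,k)}H^{*,*}(G,M)$; as $\alpha$ is $k$-rational, $\fp(\alpha)$ is a closed point, so $H^{*,*}(G,M)_{\fp(\alpha)}=0$ if and only if $\Ann_{H(G,k)}H^{*,*}(G,M)\not\subseteq\fp(\alpha)$, that is, if and only if $\fp(\alpha)\notin\supp_G(M)$. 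On the left-hand side: by the period-one free resolution of the trivial $A_k$-module exhibited above, $\alpha^*(M)$ has finite flat (equivalently, projective) dimension over $A_k$ if and only if $\Ext^1_{A_k}(k,\alpha^*(M))=0$; writing $\bar t,\bar\tau$ for the operators induced on $M$ by $\alpha(t),\alpha(\tau)$, this is the assertion that the square-zero $2d\times 2d$ matrix $\left(\begin{smallmatrix}\bar\tau&\bar t\\ -\bar t^{\,p-1}&-\bar\tau\end{smallmatrix}\right)$, $d=\dim M$, has maximal rank $d$. So it remains to show that this rank drops exactly when $\fp(\alpha)\in\supp_G(M)$.

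Next I would reduce to the case where $G$ is elementary. Because $k$ is algebraically closed and $\alpha$ is $k$-rational, $\alpha$ factors as $A_k\xrightarrow{\alpha'}kE\xrightarrow{\iota}kG$ with $E$ an elementary sub-supergroup scheme of $G$ over $k$ and $\alpha'$ a $k$-rational $\pi$-point of $E$. Since $\alpha^*(M)=(\alpha')^*(\res_{G,E}M)$, the left-hand condition depends only on $\alpha'$ and $\res_{G,E}M$. For the right-hand condition, $\fp(\alpha)=(\iota^*)^{-1}(\fp(\alpha'))$ for the restriction $\iota^*\colon H(G,k)\to H(E,k)$, so it suffices to know that $\fp(\alpha)\in\supp_G(M)$ if and only if $\fp(\alpha')\in\supp_E(\res_{G,E}M)$ — naturality of support under restriction along $E\hookrightarrow G$, which for this prime follows from finite generation of cohomology and the detection Theorem~\ref{th:super-elem-detect}. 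Thus we may assume $G=E$ is elementary.

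In the elementary case $kE$ is one of the algebras of Proposition~\ref{pr:alg}, and $\alpha'$ is equivalent to one of the standard $\pi$-points $\alpha_\lambda$ attached to a point $0\ne\lambda\in\bbA^{n+1}(k)$; being a $k$-point, $\lambda$ is generic for the closed point $\fp(\alpha_\lambda)$. By the rank reformulation above, $\alpha_\lambda^*(\res_{G,E}M)$ has infinite flat dimension exactly when $\lambda$ lies in the rank variety $V^r_E(\res_{G,E}M)$; by the rank-variety theorem of the previous section, $\mcV^r_E(\res_{G,E}M)$ is precisely the Zariski-closed subset of $\Proj k[Y_1,\dots,Y_{n+1}]$ defined by $V^r_E(\res_{G,E}M)$, so $\lambda\in V^r_E(\res_{G,E}M)$ if and only if $\fp(\alpha_\lambda)\in\mcV^r_E(\res_{G,E}M)$. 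Combining this with the identification of $\mcV^r_E(\res_{G,E}M)$, under $\Phi_E$, with $\supp_E(\res_{G,E}M)$ — the supergroup analogue of the Avrunin--Scott theorem, compatible with the description of $\supp_E$ from Remark~\ref{rk:classical} — and then undoing the reductions, completes the proof.

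The step I expect to be the real work is the reduction to the elementary case, specifically the naturality of $\supp$ under restriction along $E\hookrightarrow G$: restriction along $\alpha$ itself is far from injective on cohomology, so one cannot compare $\alpha^*(M)$ with the localisation $H^{*,*}(G,M)_{\fp(\alpha)}$ directly, and one must instead route through the single elementary $E$ carrying $\alpha$ and invoke that both support and projectivity of modules are controlled by such $E$ (Theorem~\ref{th:super-elem-detect}) before the explicit rank computation can be brought to bear.
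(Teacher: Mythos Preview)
Your reduction to the elementary case has a genuine gap in the step you yourself flag as ``the real work''. You need that $\fp(\alpha)\in\supp_G(M)$ if and only if $\fp(\alpha')\in\supp_E(\res_{G,E}M)$. One implication is standard: if $\zeta\cdot\id_M=0$ in $\Ext^*_G(M,M)$ then $\iota^*(\zeta)\cdot\id_{\res M}=0$, so the annihilator restricts and the image of $\supp_E(\res M)$ under $\Spec\iota^*$ lands in $\supp_G(M)$. But the converse --- that $\fp(\alpha)\in\supp_G(M)$ forces $\fp(\alpha')\in\supp_E(\res M)$ --- does not follow from what you cite. Theorem~\ref{th:super-elem-detect} is a \emph{global} detection statement ($M$ is projective iff every restriction is), and it quantifies over \emph{all} extensions $K$ and \emph{all} elementary $E\subseteq G_K$, not merely the single $E$ over $k$ through which your $\alpha$ happens to factor. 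There is no a~priori reason that this particular $E$ witnesses the non-vanishing of $H^{*,*}(G,M)_{\fp(\alpha)}$; asserting that it does is essentially the content of the lemma itself, rephrased.

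There is a related circularity in the final step. You invoke ``the supergroup analogue of the Avrunin--Scott theorem'' identifying $\mcV^r_E$ with $\supp_E$. In the paper's logic that is the elementary case of Theorem~\ref{th:supp=pisupp}, proved only in Section~\ref{se:localising}, \emph{after} Theorem~\ref{th:Phi-bijective}, which the present lemma feeds into. The results from \cite{Benson/Iyengar/Krause/Pevtsova:bikp7} that are available here identify $\pisupp_E$ with the rank variety $\mcV^r_E$, not with the cohomological support $\supp_E$. The paper's own proof avoids both issues by following \cite[Theorem~4.8]{Friedlander/Pevtsova:2005a} directly: one works with the restriction $H^{*,*}(G,M)\to\Ext^*_{A_k}(k,\alpha^*M)$, the periodicity of the latter, and Carlson's $L_\zeta$ modules, rather than comparing $\supp_G$ and $\supp_E$.
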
 

\begin{proof}  
This is similar to the second half of the proof of  \cite[Theorem~4.8]{Friedlander/Pevtsova:2005a}. 
\end{proof} 

\begin{theorem}
\label{th:Phi-bijective}
If $G$ is a unipotent finite supergroup scheme then the map 
\[ 
\Phi_G\colon \Pi(G) \to \Proj H^{*,*}(G,k) 
\] 
is bijective.  If $k$ is algebraically closed then for any finite-dimensional $kG$-module $M$,  the map
$\Phi_G$ takes equivalence classes of $k$-rational $\pi$-points in $\pisupp_G(M)$ 
bijectively onto the closed points of $\supp_G(M)$. 
\end{theorem}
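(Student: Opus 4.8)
The plan is to prove that $\Phi_G$ is bijective for every unipotent finite supergroup scheme $G$, following the strategy used by Friedlander and Pevtsova in the non-super case, and then to deduce the statement about $k$-rational $\pi$-points from this together with Lemma~\ref{le:connection}. Surjectivity of $\Phi_G$ is the easier half and rests on Theorem~\ref{th:super-elem-detect}(i) together with the already established bijectivity of $\Phi_E$ for elementary $E$. Given $\fp\in\Proj H^{*,*}(G,k)$, the restriction maps $H^{*,*}(G,k)\to H^{*,*}(E,K)$ to elementary sub-supergroup schemes $E\subseteq G_K$ are module-finite after the faithfully flat base change $k\to K$ (by Shapiro's lemma --- $kG_K$ is free over $kE$ --- and finite generation of cohomology for finite supergroup schemes), and Theorem~\ref{th:super-elem-detect}(i) says that the associated kernels cut out precisely the nilradical as $(K,E)$ vary. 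A Quillen-stratification-type argument, identical in form to the corresponding step in \cite{Friedlander/Pevtsova:2005a} and using only that $H^{*,*}(G,k)$ is Noetherian, then exhibits $\fp$ as the restriction of a prime on some $\Proj H^{*,*}(E,K)$; composing $E\hookrightarrow G_K$ with a $\pi$-point of $E$ realizing that prime (one exists because $\Phi_E$ is onto) produces a $\pi$-point $\alpha$ of $G$ with $\fp(\alpha)=\fp$.

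For injectivity, the connected case is Corollary~\ref{co:connected}, and the general case I would reduce to it exactly as in \cite[\S4]{Friedlander/Pevtsova:2005a}. Write $G=G^0\rtimes\pi$ with $\pi=\pi_0(G)$. The component group of an elementary sub-supergroup scheme is elementary abelian, so any $\pi$-point of $G$ factors through $G^0_K\rtimes\mathsf E_K$ for some elementary abelian subgroup $\mathsf E\le\pi$, and one may take $\mathsf E$ minimal with this property, which is equivalent to saying that $\sigma_{G^0\rtimes\mathsf E}$ does not lie in the prime of $H^{*,*}(G^0\rtimes\mathsf E,k)$ attached to that $\pi$-point. Proposition~\ref{pr:evens}, built on the Evens norm, shows that restriction injects this $\sigma$-open part of $\Proj H^{*,*}(G^0\rtimes\mathsf E,k)$, modulo the action of $N_\pi(\mathsf E)$, into $\Proj H^{*,*}(G,k)$. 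Hence two $\pi$-points $\alpha,\beta$ with $\fp(\alpha)=\fp(\beta)$ can, after replacing $\beta$ by an $N_\pi(\mathsf E)$-conjugate, be assumed to live over one and the same $G^0\rtimes\mathsf E$ with equal attached primes there, both avoiding $\sigma_{G^0\rtimes\mathsf E}$. Corollary~\ref{co:invariants} then transports the comparison to the subgroup scheme $(G^0)^{\mathsf E}\times\mathsf E$, a product of a connected unipotent supergroup scheme and an elementary abelian $p$-group, on which a Künneth / join argument splits both $\Pi(-)$ and the cohomology into the two factors; injectivity of $\Phi$ for the connected factor is again Corollary~\ref{co:connected}, and for the elementary abelian factor it is the classical rank-variety theory of Avrunin--Scott and Benson--Carlson--Rickard. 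This gives $\alpha\sim\beta$.

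Granting the bijectivity of $\Phi_G$, the second assertion is nearly formal. Assume $k$ algebraically closed. If $\alpha\colon A_k\to kG$ is $k$-rational, then $\fp(\alpha)$ is the radical of the kernel of the composite $H^{*,*}(G,k)\to\Ext^{*,*}_{A_k}(k,k)$, whose target is, modulo nilpotents, the one-variable polynomial ring $k[\eta]$; since $\alpha$ is a $\pi$-point, $\fp(\alpha)$ is not the irrelevant ideal, so $H^{*,*}(G,k)/\fp(\alpha)$ is a one-dimensional graded domain and $\fp(\alpha)$ is a closed point of $\Proj H^{*,*}(G,k)$. Conversely, every closed point is $\fp(\alpha)$ for some $k$-rational $\alpha$: by the surjectivity step such a point is detected on an elementary sub-supergroup scheme over some extension field, and, $k$ being algebraically closed and the point closed, a descent argument as in \cite[\S4]{Friedlander/Pevtsova:2005a} replaces this by an elementary $E\subseteq G$ defined over $k$ and, via the parametrization of $\pi$-points of $E$ by an affine space over $k$ (Proposition~\ref{pr:alg} and \eqref{eq:lambda}; compare Theorem~\ref{th:super-Dade}), by a $k$-rational $\pi$-point. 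Finally, for a finite-dimensional $M$ and a $k$-rational $\alpha$, Lemma~\ref{le:connection} gives that $\alpha^*(M)$ has finite flat dimension if and only if $H^{*,*}(G,M)_{\fp(\alpha)}=0$, while by Remark~\ref{rk:classical} $\supp_G(M)=V(\Ann_{H(G,k)}H^{*,*}(G,M))$, so that $H^{*,*}(G,M)_{\fp(\alpha)}\ne0$ exactly when $\fp(\alpha)\in\supp_G(M)$. Thus $\alpha\in\pisupp_G(M)$ if and only if $\fp(\alpha)\in\supp_G(M)$, and combined with the injectivity of $\Phi_G$ this yields the asserted bijection between $k$-rational $\pi$-points in $\pisupp_G(M)$ and closed points of $\supp_G(M)$.

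The step I expect to be the main obstacle is the reduction of injectivity to the connected case: one has to verify that every ingredient of the Friedlander--Pevtsova package --- minimality of $\mathsf E$ and the accompanying $\sigma$-localization, the Evens-norm estimate of Proposition~\ref{pr:evens}, the passage to the invariants $(G^0)^{\mathsf E}$, and the product splitting of $\pi$-support --- goes through in the presence of the $\bbZ/2$-grading and of the non-commutative Witt-elementary group algebras. The descent of elementary sub-supergroup schemes defined over extension fields down to the base field, needed both in the surjectivity step and in the $k$-rational refinement, is of comparable delicacy.
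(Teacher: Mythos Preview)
Your proposal follows the same overall architecture as the paper's proof: surjectivity via Theorem~\ref{th:super-elem-detect} and the elementary case, and injectivity via the Friedlander--Pevtsova reduction chain $G\rightsquigarrow G^0\rtimes\mathsf E\rightsquigarrow (G^0)^{\mathsf E}\times\mathsf E$ using Propositions~\ref{pr:evens} and~\ref{pr:invariants}, together with Lemma~\ref{le:connection} for the second assertion. Two points deserve comment.

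First, there is a structural difference that matters. The paper does \emph{not} prove injectivity of $\Phi_G$ for arbitrary $\pi$-points directly via the reduction chain. Instead it first runs the reduction only for $k$-rational $\pi$-points over an algebraically closed $k$ (where the splitting $G=G^0\rtimes\pi$ is available), establishes the second assertion about $\supp_G(M)$ via Lemma~\ref{le:connection}, and then uses that assertion, applied to $G_K$ over a common algebraically closed extension $K$, to bootstrap to arbitrary $\pi$-points by a support-variety/annihilator argument. Your write-up attempts the general case in one pass. The difficulty is twofold: the semidirect decomposition $G=G^0\rtimes\pi$ need not exist over an imperfect $k$, and even after base change to a perfect $K$, the hypothesis $\fp(\alpha)=\fp(\beta)$ in $\Proj H^{*,*}(G,k)$ does not obviously force $\fp(\alpha,K)=\fp(\beta,K)$ in $\Proj H^{*,*}(G_K,K)$, since several primes may lie over a given one. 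The paper's bootstrap circumvents exactly this: from $\alpha\not\sim\beta$ one produces a finite-dimensional $kG$-module $M$ separating them, uses the already-established $K$-rational case to place $\fp(\alpha,K)$ inside and $\fp(\beta,K)$ outside $\supp_{G_K}(M_K)$, and then contracts the annihilator ideal back to $H(G,k)$ to contradict $\fp(\alpha)=\fp(\beta)$. You should either incorporate this step or explain why your reduction, run over an extension field, still yields the conclusion over $k$.

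Second, at the end of the reduction the paper does not invoke a K\"unneth/join decomposition of $\Pi((G^0)^{\mathsf E}\times\mathsf E)$; it instead observes that the group algebra of $(G^0)^{\mathsf E}\times\mathsf E$ is isomorphic (as an algebra) to that of a connected unipotent supergroup scheme, so that Corollary~\ref{co:connected} applies directly. Your K\"unneth route is plausible but would require checking that $\Pi$ of a product splits suitably, which is not immediate since elementary sub-supergroup schemes of a product need not be products of elementaries; the paper's shortcut avoids this.
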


\begin{proof}  
Surjectivity follows from Theorem~\ref{th:super-elem-detect}, together with~\cite[Theorem~6.9]{Benson/Iyengar/Krause/Pevtsova:bikp7} that shows that $\Phi_G$ is bijective for $G$ elementary. 

We have thus to show that if $\Phi_G(\alpha) = \Phi_G(\beta)$  for $\pi$-points 
$\alpha, \beta$, then  $\alpha$ is equivalent to $\beta$.  We first consider 
the case when $\alpha\colon A \to kG$ and $\beta\colon A \to kG$ are both defined 
over the ground field $k$ which we assume to be algebraically closed.  

In this case the proof is essentially the same as the proof of Theorem~4.6
of~\cite{Friedlander/Pevtsova:2005a} and proceeds in a series of reductions. 
We sketch the main steps here and refer the reader to~\cite{Friedlander/Pevtsova:2005a} for details.  

Since $k$ is perfect, we can write $G = G^0\rtimes \pi$ where $G^0$ is connected 
and $\pi$ is a finite group (the group of connected components of $G$). By definition, 
$\alpha$ factors through some elementary supergroup $E_\alpha = E_\alpha^0 \times 
\pi_0(E_\alpha)$ and similarly $\beta$ factors through $E_\beta = E_\beta^0 \times 
\pi_0(E_\beta)$. By choosing representatives $\alpha$, $\beta$ within their equivalence 
classes in such a way that $\pi_0(E_\alpha)$ and $\pi_0(E_\beta)$ are minimal, we can 
apply Proposition~\ref{pr:evens} to show that $\pi_0(E_\beta)$ is conjugate to 
$\pi_0(E_\alpha)$ by an element in $N_\pi(\pi_0(E_\alpha))$. Since conjugation 
preserves equivalence of $\pi$-points, we can now assume that both $\pi$-points 
factor through $G^0 \rtimes \pi_0(E_\alpha)$. Hence, we reduce to the case when 
$G=G^0 \rtimes \mathsf E$ with the group of connected components elementary abelian.

 Since $\alpha$, $\beta$ factor through elementary sub-supergroup schemes 
 of $G = G^0 \rtimes \mathsf E$, they both must factor through $(G^0)^\mathsf E 
 \times \mathsf E$.  Corollary~\ref{co:invariants} now implies that we can assume 
 that $G = (G^0)^\mathsf E \times \mathsf E$ which completes the second reduction 
 step. Finally, since the group algebra of $(G^0)^\mathsf E \times \mathsf E$ is 
 isomorphic to a group algebra of a connected unipotent finite supergroup 
 scheme, the statement follows from Corollary~\ref{co:connected}.  
 
 The statement about $\Phi_G$ identifying equivalences classes of $\pi$-points defined 
 over $k$ and closed points of $\supp_G(M)$ is easily seen to be equivalent to Lemma~\ref{le:connection}. 
 
 We finally prove injectivity of $\Phi_G$ for any $\pi$-points. 
 
 Let $\alpha$, $\beta$ be $\pi$-points such that $\fp(\alpha) = \fp(\beta)$.  Since extending scalars does not affect $\fp(\alpha)$, we can 
 assume that $\alpha, \beta\colon A_K \to KG_K$ are defined over the same field $K$ which is algebraically  closed.    Suppose $\alpha$ and $\beta$ are not equivalent. Then there exists  a finite dimensional $kG$-module $M$ such that $\alpha^*(M)$ has  infinite flat dimension but $\beta^*(M)$ does not, or vice versa.  Let $\fp(\alpha,K)$ be the radical of the kernel of the map 
 $\alpha^*\colon H(G_K,K) \to H(A_K,K)$  which is a point in $\Proj H(G_K,K)$  lying over $\fp(\alpha)$, and similarly for $\fp(\beta,K)$. Since $\Phi_{G_K}$ 
 takes $\pi$-points defined over $K$ to the $K$-closed points of $\supp_{G_K}(M_K)$, 
 we conclude that $\fp(\alpha, K) \in \supp_{G_K}(M_K)$. Hence, 
 $\Ann_{H(G_K,K)} H(G_K,M_K) \subset \fp(\alpha, K)$ (see Remark~\ref{rk:classical}); and similarly, 
 $\Ann_{H(G_K,K)} H(G_K,M_K) \not \subset \fp(\beta, K)$. But for an ideal $\mathfrak I$ of $H(G,k)$ 
 we have  $ \mathfrak I \subset \fp(\alpha) = \fp(\alpha, K) \cap H(G,k)$ if and only if $\mathfrak I \subset \fp_{\alpha,K}$. We conclude 
 that $\fp(\alpha) \not = \fp(\beta)$ which is a contradiction. Hence, $\Phi_G$ is injective. 
 \end{proof}

\begin{remark} 
We can endow $\Pi(G)$ with the structure of the topological space by choosing 
$\pisupp_G(M)$ to be a closed set exactly when $M$ is a finite-dimensional 
$kG$-modules. With this topology $\Phi_G$ becomes a homeomorphism. 
\end{remark} 

\section{Stratification}
\label{se:localising}

Knowing that $\Phi_G$ is bijective we can apply the techniques developed for finite group schemes to identify  $\pisupp$ and $\supp$ for \emph{all} $kG$-modules $M$, thereby proving the analogue of the result of Avrunin and Scott~\ref{th:AS} for unipotent supergroup schemes. The arguments for the results stated in this section are  similar to the ones for finite group schemes which appear in \cite{Benson/Iyengar/Krause/Pevtsova:2018a,Friedlander/Pevtsova:2007a}, so we give only a brief outline of how they go, referring the reader to those papers for details.

As always $k$ will be a field  of characteristic $p\ge 3$ and $G$ a unipotent finite supergroup scheme over $k$. We first compute $\pi$-supports of the local cohomology modules introduced in \S~\ref{se:local}. From now on we identity $\Proj H^{*,*}(G,k)$ with $\Pi(G)$, using Theorem~\ref{th:Phi-bijective}.

\begin{proposition}
\label{pr:suppGamma}
Let $G$ be a unipotent finite supergroup scheme. 
If $V$ is a specialisation closed subset of
$\Proj H^{*,*}(G,k)$ then 
\begin{enumerate}
\item $\pisupp_G(\gam_V(k))=V$, 
\item $\pisupp_G(L_V(k))$ is the complement of $V$, and
\item $\pisupp_G(\gam_\fp(k))=\{\fp\}$.
\end{enumerate}
\end{proposition}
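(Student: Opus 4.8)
The plan is to establish (1) first and to deduce (2) and (3) from it by formal manipulations with the triangle $\gam_V k\to k\to L_V k$, the tensor product formula (Theorem~\ref{th:tensor}), and the identities $\gam_V X\simeq\gam_V k\otimes_k X$ and $L_V X\simeq L_V k\otimes_k X$ that hold for a canonical action. Two facts will be imported. First, $\pisupp_G(M)=\supp_G(M)$ for every finite-dimensional $kG$-module $M$: Theorem~\ref{th:Phi-bijective} gives this on closed points over an algebraically closed field, and since both sides are Zariski closed and commute with field extension — for $\pisupp_G(M)$ one reduces by Theorem~\ref{th:super-elem-detect} to elementary $E$, where it is the rank variety $\mcV^r_E(M)$ — this upgrades to an equality of subsets of $\Proj H^{*,*}(G,k)$, which is a Jacobson space. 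Second, the subcategory $\StMod(kG)_V$ of $V$-torsion objects is the localising subcategory generated by the finite-dimensional modules $M$ with $\supp_G(M)\subseteq V$.

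For the inclusion $\pisupp_G(\gam_V k)\subseteq V$, fix a $\pi$-point $\alpha\colon A_K\to KG_K$ with $\fp(\alpha)\notin V$ and let $\sfC_\alpha$ be the full subcategory of $\StMod(kG)$ consisting of those $X$ for which $\alpha^*(X_K)$ has finite flat dimension over $A_K$. The crucial point is that $A_K=K[t,\tau]/(t^p-\tau^2)$ is a one-dimensional hypersurface domain, so any $A_K$-module of finite flat dimension has flat dimension at most one; consequently the condition that $\alpha^*(X_K)$ have finite flat dimension is preserved under shifts, under $\Pi$, under cones, under arbitrary coproducts, and under filtered homotopy colimits, so $\sfC_\alpha$ is a localising subcategory. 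It contains every finite-dimensional $M$ with $\supp_G(M)\subseteq V$, since then $\fp(\alpha)\notin V\supseteq\supp_G(M)=\pisupp_G(M)$, so $\alpha^*(M_K)$ has finite flat dimension; hence $\sfC_\alpha$ contains $\gam_V k$, which lies in the localising subcategory generated by such $M$. Thus $\fp(\alpha)\notin\pisupp_G(\gam_V k)$.

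For the reverse inclusion, fix $\fp\in V$ and take $M$ to be the Koszul object on a homogeneous generating set of $\fp$, a finite-dimensional module with $\supp_G(M)=\overline{\{\fp\}}\subseteq V$ (the inclusion because $V$ is specialisation closed). Then $\supp_G(L_V M)=\supp_G(M)\setminus V=\varnothing$, so $L_V M=0$; since $L_V M\simeq L_V k\otimes_k M$, tensoring the triangle $\gam_V k\to k\to L_V k$ with $M$ gives $\gam_V k\otimes_k M\simeq M$, and the tensor product formula yields $\fp\in\supp_G(M)=\pisupp_G(M)=\pisupp_G(\gam_V k)\cap\pisupp_G(M)$, proving (1). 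Parts (2) and (3) now follow formally: $L_V k\otimes_k M=0$ together with the tensor product formula shows $\pisupp_G(L_V k)\cap V=\varnothing$, while the triangle $\gam_V k\to k\to L_V k$ combined with $\pisupp_G(k)=\Pi(G)$ — which holds because the periodic $A_K$-free resolution of the trivial module displayed earlier makes $\alpha^*(K)$ of infinite flat dimension for every $\pi$-point $\alpha$ — gives the opposite inclusion, hence (2); and writing $\gam_\fp k\simeq\gam_V k\otimes_k L_W k$ with $V,W$ specialisation closed, $V\not\subseteq W$ and $V\subseteq W\cup\{\fp\}$ as in Definition~\ref{def:Gamma-p}, the tensor product formula and (1), (2) give $\pisupp_G(\gam_\fp k)=V\cap(\Pi(G)\setminus W)=V\setminus W=\{\fp\}$, which is (3).

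I expect the main obstacle to be the first imported fact: turning the closed-point statement of Theorem~\ref{th:Phi-bijective} into the equality $\pisupp_G(M)=\supp_G(M)$ of subsets of $\Proj H^{*,*}(G,k)$ for finite-dimensional $M$. This requires knowing that $\pisupp_G(M)$ is Zariski closed — obtained by reducing, via Theorem~\ref{th:super-elem-detect}, to elementary supergroup schemes and invoking the rank variety description — together with a routine base-change and density argument over the algebraic closure. Everything else in the proof is formal, the only non-formal input being the one-dimensionality of $A_K$, which is precisely what makes the finite-flat-dimension condition cut out a localising subcategory.
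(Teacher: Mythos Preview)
Your argument is essentially correct and follows the approach of Friedlander--Pevtsova that the paper invokes, with the crucial adaptation being your observation that the one-dimensionality of $A_K$ bounds flat dimension uniformly by $1$, so that $\sfC_\alpha$ is closed under coproducts and hence localising; in the finite group scheme setting the analogue $K[t]/(t^p)$ is artinian local, so finite flat dimension means free and this step is trivial there. The derivation of (2) and (3) from (1) via the tensor product formula is exactly what the paper does.

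One point deserves care. Your justification of the first imported fact---that $\pisupp_G(M)=\supp_G(M)$ for finite-dimensional $M$---relies on a Jacobson-space argument requiring $\pisupp_G(M)$ to be Zariski closed, and the reduction you sketch via Theorem~\ref{th:super-elem-detect} does not deliver this: that theorem detects projectivity, but it does not express $\pisupp_G(M)$ as a finite union of closed images from elementary sub-supergroup schemes, and a priori the union over all extension fields $K$ and all elementary $E\subseteq G_K$ need not be closed. A cleaner route avoids closedness altogether and argues pointwise. Given any $\pi$-point $\alpha\colon A_K\to KG_K$, extend $K$ to be algebraically closed; then $\alpha$ is a $K$-rational $\pi$-point of $G_K$, so Lemma~\ref{le:connection} applied to $G_K$ shows $\alpha^*(M_K)$ has finite flat dimension if and only if $H^{*,*}(G_K,M_K)_{\fp_K(\alpha)}=0$, where $\fp_K(\alpha)$ is the corresponding closed point of $\Proj H^{*,*}(G_K,K)$. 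Since $\supp_{G_K}(M_K)$ is the preimage of $\supp_G(M)$ under the base-change map, this is equivalent to $\fp(\alpha)\notin\supp_G(M)$. This yields $\pisupp_G(M)=\supp_G(M)$ directly, and your argument then goes through unchanged.
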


\begin{proof}
The proof of this is the same as in Proposition~6.6  of~\cite{Friedlander/Pevtsova:2007a}. For the last statement, we choose specialisation closed subsets 
$V$ and $W$ as in Definition~\ref{def:Gamma-p}, and use the tensor product formula given in Theorem~\ref{th:tensor}.
\end{proof}

Given Theorems~\ref{th:super-elem-detect} and ~\ref{th:tensor}, and Proposition~\ref{pr:suppGamma}, one can argue as in the proof of 
\cite[Theorem~6.1]{Benson/Iyengar/Krause/Pevtsova:2018a} to get establish the result below.

\begin{theorem}
\label{th:supp=pisupp}
Let $G$ be a unipotent finite supergroup scheme. For any $kG$-module $M$ one has $\supp_G(M)=\pisupp_G(M)$. \qed
\end{theorem}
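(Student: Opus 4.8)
The plan is to mimic the structure of the proof of Theorem~6.1 in \cite{Benson/Iyengar/Krause/Pevtsova:2018a}, adapted to the supergroup setting, now that all the requisite ingredients are in place. The two notions of support in play are the cohomological support $\supp_G(-)$ of Definition~\ref{def:supp}, defined via local cohomology for the action of $H(G,k)$ on $\StMod(kG)$, and the $\pi$-support $\pisupp_G(-)$ of Definition~\ref{de:phi}, which has been identified with a subset of $\Proj H^{*,*}(G,k)$ through the homeomorphism $\Phi_G$ of Theorem~\ref{th:Phi-bijective}. The goal is to show these agree for every $kG$-module, not just finite-dimensional ones.

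First I would establish a ``detection'' statement relating $\pi$-support to the local cohomology functors: for $\fp \in \Proj H^{*,*}(G,k)$ and any $kG$-module $M$, one has $\fp \in \pisupp_G(M)$ if and only if $\gam_\fp M \neq 0$. The key inputs are Proposition~\ref{pr:suppGamma}, which computes $\pisupp_G(\gam_\fp k) = \{\fp\}$, and the tensor product formula of Theorem~\ref{th:tensor}. Indeed, since the action of $H(G,k)$ is canonical, $\gam_\fp M \cong \gam_\fp k \otimes_k M$, so by Theorem~\ref{th:tensor},
\[
\pisupp_G(\gam_\fp M) = \pisupp_G(\gam_\fp k) \cap \pisupp_G(M) = \{\fp\} \cap \pisupp_G(M),
\]
which is $\{\fp\}$ if $\fp \in \pisupp_G(M)$ and empty otherwise. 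By the detection theorem for $\pi$-support, Theorem~\ref{th:detection}, $\pisupp_G(\gam_\fp M) = \varnothing$ if and only if $\gam_\fp M$ is projective, i.e.\ zero in $\StMod(kG)$. Combining, $\gam_\fp M \neq 0 \iff \fp \in \pisupp_G(M)$. Since $\supp_G(M) = \{\fp \mid \gam_\fp M \neq 0\}$ by definition, this immediately yields $\supp_G(M) = \pisupp_G(M)$.

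The main obstacle will be making sure the tensor product formula in Theorem~\ref{th:tensor} and the detection result in Theorem~\ref{th:detection} are legitimately available for \emph{arbitrary} (possibly infinite-dimensional) modules, and that $\gam_\fp k \otimes_k M$ is the correct model for $\gam_\fp M$ in this $\bbZ/2$-graded tensor triangulated setting; one must check that the generalities of Section~\ref{se:local} on canonical actions apply verbatim, with the localising subcategories taken to be closed under the parity shift $\Pi$. A secondary point is the reduction, if needed, from an arbitrary ground field to an algebraically closed one (Lemma~\ref{le:connection} and the closed-point statement in Theorem~\ref{th:Phi-bijective} are phrased over algebraically closed $k$), but since both supports are compatible with field extension and are determined by their behaviour on $\gam_\fp$, this is routine. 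Once these technical points are in place, the argument is short, and I would simply note that it runs exactly as in \cite[Theorem~6.1]{Benson/Iyengar/Krause/Pevtsova:2018a}.
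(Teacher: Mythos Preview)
Your proposal is correct and follows essentially the same route the paper takes: the paper simply says to combine Proposition~\ref{pr:suppGamma}, Theorem~\ref{th:tensor}, and the detection result (which the paper phrases via Theorem~\ref{th:super-elem-detect}, but this amounts to Theorem~\ref{th:detection}) and then argue as in \cite[Theorem~6.1]{Benson/Iyengar/Krause/Pevtsova:2018a}, which is precisely the computation $\pisupp_G(\gam_\fp M)=\{\fp\}\cap\pisupp_G(M)$ you wrote down. The technical worries you flag are not obstacles here, since Theorems~\ref{th:tensor} and~\ref{th:detection} are already stated for arbitrary $kG$-modules and the identification $\gam_\fp M\cong \gam_\fp k\otimes_k M$ is part of the general machinery for canonical actions recalled in Section~\ref{se:local}.
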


At this point cosupport can no longer be ignored: We write $\cosupp_G(M)$ for the cosupport of a $kG$-module $M$, defined as in \ref{def:biksupp}, using
the action of $H^{*,*}(G,k)$ on $\StMod(kG)$. The \emph{$\pi$-cosupport} of $M$ consists of equivalence classes of $\pi$-points $\alpha\colon A_K\to KG_K$ such that $\alpha^*(\Hom_k(K,M))$ has infinite flat dimension; compare with Definition~\ref{de:phi}. It has to be checked that this is well defined;  see
\cite[Theorem~4.12]{Benson/Iyengar/Krause/Pevtsova:bikp7} for the case of elementary supergroup schemes. Then, using \cite[Theorem~11.3]{Benson/Iyengar/Krause/Pevtsova:bikp5} one can verify that $\pi$-cosupport, like $\pi$-support, detects projectivity.

\begin{theorem}
\label{th:detection-cosupport}
Let $G$ be a unipotent finite supergroup scheme. A $kG$-module $M$ is projective if and
only if $\picosupp_G(M)=\varnothing$. \qed
\end{theorem}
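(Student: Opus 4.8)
The plan is to prove the two implications separately, following the template of Theorem~\ref{th:detection} --- the analogous statement for $\pi$-support --- with restriction of scalars $K\otimes_k-$ replaced throughout by its right adjoint, the coinduction functor $\Hom_k(K,-)\colon\Mod kG\to\Mod KG_K$.

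First I would dispose of the easy direction: if $M$ is projective, then for any extension field $K$ of $k$ the module $\Hom_k(K,M)$ is projective over $KG_K$. Indeed, $\Hom_k(K,-)$ is right adjoint to restriction of scalars along $kG\to KG_K$, so it carries injective $kG$-modules to injective $KG_K$-modules; as $kG$ is self-injective, $\Hom_k(K,M)$ is therefore injective, hence projective. Since every $\pi$-point $\alpha\colon A_K\to KG_K$ has finite flat dimension by definition, $\alpha^*$ sends projective $KG_K$-modules to $A_K$-modules of finite flat dimension, so $\alpha^*(\Hom_k(K,M))$ has finite flat dimension for every $\alpha$ and $\picosupp_G(M)=\varnothing$. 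This mirrors the first paragraph of the proof of Theorem~\ref{th:detection}.

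For the converse, suppose $\picosupp_G(M)=\varnothing$. By Theorem~\ref{th:super-elem-detect}(ii) it suffices to prove that $\res_{G_K,E}(M_K)$ is projective over $KE$ for every extension field $K$ of $k$ and every elementary sub-supergroup scheme $E$ of $G_K$. One first checks, exactly as for finite group schemes in~\cite{Friedlander/Pevtsova:2007a,Benson/Iyengar/Krause/Pevtsova:2018a}, that $\pi$-cosupport is compatible with restriction: the $\pi$-cosupport of $\res_{G_K,E}(M_K)$ maps into $\picosupp_G(M)=\varnothing$ under the natural map $\Pi(E)\to\Pi(G)$, and so is empty. Here one must be a little careful, since coinduction does not commute with base change, and one therefore argues through the factorisation of $\pi$-points of $G_K$ through sub-supergroup schemes rather than through a naive comparison of coinduced modules. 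It then remains to settle the elementary case: an $E$-module with empty $\pi$-cosupport is projective over $kE$. Well-definedness of $\picosupp_E$ is~\cite[Theorem~4.12]{Benson/Iyengar/Krause/Pevtsova:bikp7}, and the detection statement itself comes from the structural analysis of the group algebras $kE$ carried out in~\cite{Benson/Iyengar/Krause/Pevtsova:bikp5}, specifically~\cite[Theorem~11.3]{Benson/Iyengar/Krause/Pevtsova:bikp5}, which plays the role for $\pi$-cosupport that Theorem~\ref{th:super-Dade} plays for $\pi$-support. Feeding this back into Theorem~\ref{th:super-elem-detect}(ii) then yields that $M$ is projective over $kG$.

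The main obstacle is precisely this elementary case. For a \emph{finite-dimensional} $E$-module there is no new content, since $\Hom_k(K,M)\cong\Hom_k(K,k)\otimes_k M$ shows that $\alpha^*(\Hom_k(K,M))$ is a nonzero coproduct of copies of $\alpha^*(M_K)$, whence $\picosupp_E(M)=\pisupp_E(M)$ and Theorem~\ref{th:detection} applies. The real difficulty is with infinitely generated modules, and in particular with modules over the Witt-elementary supergroup schemes of type~III, whose group algebra~\eqref{eq:KEmn-} fails to be graded commutative: controlling the flat dimension of $\alpha^*(M_K)$ for such modules and matching the resulting subset of $\Proj k[Y_1,\dots,Y_{n+1}]$ against the cohomological cosupport of Section~\ref{se:local} is exactly what~\cite[Theorem~11.3]{Benson/Iyengar/Krause/Pevtsova:bikp5} supplies. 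A lesser, but still non-routine, point is the compatibility of $\pi$-cosupport with restriction invoked in the reduction step.
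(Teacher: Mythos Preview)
Your forward direction is fine. The gap is in the converse, precisely at the reduction step you flag as ``non-routine.'' You invoke Theorem~\ref{th:super-elem-detect}(ii), which tests projectivity of $M$ via $\res_{G_K,E}(M_K)$, and then assert that $\picosupp_E(\res_{G_K,E}(M_K))$ lands in $\picosupp_G(M)$. But membership in $\picosupp_E$ of a $KE$-module $N$ is decided by $\beta^*(\Hom_K(L,N))$ for $L\supseteq K$, whereas membership in $\picosupp_G(M)$ is decided by $\alpha^*(\Hom_k(L,M))$. With $N=M_K$ one is comparing $\Hom_K(L,K\otimes_k M)$ to $\Hom_k(L,M)$, and these are genuinely different $LG_L$-modules; no amount of rerouting through sub-supergroup schemes repairs this. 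So the inclusion you need simply does not hold as stated.

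The fix is to replace induction by coinduction throughout the reduction. What \cite[Theorem~11.3]{Benson/Iyengar/Krause/Pevtsova:bikp5} supplies is not the elementary case but the coinduction analogue of Theorem~\ref{th:super-elem-detect}(ii): $M$ is projective if and only if $\res_{G_K,E}(\Hom_k(K,M))$ is projective for every extension $K$ and every elementary $E\subseteq G_K$. With this in hand the compatibility you want becomes the transitivity-of-coinduction identity
\[
\Hom_K\bigl(L,\Hom_k(K,M)\bigr)\cong \Hom_k(L,M),
\]
so $\picosupp_E\bigl(\res_{G_K,E}(\Hom_k(K,M))\bigr)$ does map into $\picosupp_G(M)$, and the elementary case (from \cite{Benson/Iyengar/Krause/Pevtsova:bikp7}) finishes the argument exactly as you outline. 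In short: your scaffolding is right, but you have attached \cite[Theorem~11.3]{Benson/Iyengar/Krause/Pevtsova:bikp5} to the wrong node---it is the global coinduction detection theorem, not the elementary one, and it is precisely what makes the reduction go through.

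A smaller point: your parenthetical claim that $\Hom_k(K,M)$ is a coproduct of copies of $M_K$ requires $K/k$ finite; for arbitrary extensions this fails, so even your finite-dimensional remark needs that caveat.
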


Here is the analogue of the tensor-product formula~\ref{th:tensor} for $\pi$-support, and proved in the same way: reduce to the super elementary case and apply \cite[Theorem~7.6]{Benson/Iyengar/Krause/Pevtsova:bikp7}.

\begin{theorem}
\label{th:hom}
\pushQED{\qed }Let $G$ be a unipotent finite supergroup scheme over a field $k$ of characteristic $p\ge 3$, and let $M$ and $N$ be $kG$-modules.  As subsets of $\Pi(G)$ we have
\[ 
\cosupp_G\Hom_k(M,N) = \pisupp_G(M) \cap \picosupp_G(N).  \qedhere
\]
\end{theorem}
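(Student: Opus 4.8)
The plan is to follow the proof of Theorem~\ref{th:tensor}: reduce the identity to the case of elementary supergroup schemes and then quote \cite[Theorem~7.6]{Benson/Iyengar/Krause/Pevtsova:bikp7}. The new ingredient, with no counterpart in the proof of Theorem~\ref{th:tensor}, is that the left-hand side is written in terms of the cohomological cosupport $\cosupp_G$, whereas the right-hand side and the reduction step live naturally in the world of $\pi$-points. So the statement is really the conjunction of two facts: first, that $\cosupp_G(X)=\picosupp_G(X)$ for every $kG$-module $X$; and second, the purely $\pi$-theoretic identity $\picosupp_G\Hom_k(M,N)=\pisupp_G(M)\cap\picosupp_G(N)$. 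The first fact is a cosupport analogue of Theorem~\ref{th:supp=pisupp}, proved by a cosupport version of the argument there: once $\Phi_G$ is a homeomorphism (Theorem~\ref{th:Phi-bijective}) and $\picosupp$ detects projectivity (Theorem~\ref{th:detection-cosupport}), one computes the $\pi$-cosupport of the relevant local cohomology and local homology objects as in \cite{Benson/Iyengar/Krause/Pevtsova:2017a}, with Proposition~\ref{pr:suppGamma} playing the role of the $\pi$-support computation there.

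For the second fact I would argue as in the proof of Theorem~\ref{th:tensor}. Fix a point of $\Pi(G)$, represented by a $\pi$-point $\alpha\colon A_K\to KE\subseteq KG_K$ with $E\hookrightarrow G_K$ elementary, and factor $\alpha=\iota\circ\alpha'$ through $\alpha'\colon A_K\to KE$. Unwinding the definitions, the class of $\alpha$ lies in $\pisupp_G(M)$ (resp.\ in $\picosupp_G(N)$, resp.\ in $\picosupp_G\Hom_k(M,N)$) precisely when $\alpha^{*}(M_K)$ (resp.\ $\alpha^{*}(\Hom_k(K,N))$, resp.\ $\alpha^{*}(\Hom_k(K,\Hom_k(M,N)))$) has infinite flat dimension over $A_K$. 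Since $\alpha$ factors through $KE$, each of these conditions depends only on the restriction along $\iota$ of the corresponding $KG_K$-module; and restriction along $\iota$ commutes both with extension of scalars $k\subseteq K$ and with the internal hom functor $\Hom_k(-,-)$. Hence the desired equivalence at this point is a consequence of the corresponding equivalence for the elementary supergroup scheme $E$ over $K$ (which we may as well rename $k$), applied to the restricted modules; this reduces us to the case that $G$ itself is elementary.

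In that case the identity is \cite[Theorem~7.6]{Benson/Iyengar/Krause/Pevtsova:bikp7}. Here $kE$ is commutative in the ungraded sense with explicit generators by Proposition~\ref{pr:alg}, $\Pi(E)\cong\Proj H^{*,*}(E,k)$ is a projective space, and the $\pi$-points of $E$ are the explicit maps $\alpha_\lambda$ of~\eqref{eq:lambda}; the proof there runs in parallel with the elementary abelian case of \cite{Benson/Iyengar/Krause/Pevtsova:2017a}, using the period-one free resolution of $K$ over $A_K$ displayed above to convert finiteness of flat dimension of $\alpha_\lambda^{*}(-)$ into the rank condition on the square-zero matrix~\eqref{eq:matrix}, and then analysing $\Hom_k(M,N)$ through this rank calculus together with Carlson's $L_\zeta$-modules. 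The principal obstacle is precisely this elementary-case input. On the reduction side, the only genuine subtlety is the point flagged above: cosupport, unlike support, does not visibly localise to sub-supergroup schemes, which is exactly why one must first pass from $\cosupp_G$ to $\picosupp_G$ before restricting.
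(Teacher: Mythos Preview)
Your reduction to the elementary case and appeal to \cite[Theorem~7.6]{Benson/Iyengar/Krause/Pevtsova:bikp7} is exactly what the paper does; that part is fine and matches the paper's one-line proof.

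The difficulty you flag is real, but your resolution of it runs into a circularity. The left-hand side in the stated theorem is written as $\cosupp_G$, yet the paper's proof (``reduce to the super elementary case and apply \cite[Theorem~7.6]{Benson/Iyengar/Krause/Pevtsova:bikp7}'') only yields the $\picosupp_G$ version, and indeed every subsequent use of this theorem in the paper---both places in the proof of Theorem~\ref{th:minimality}---invokes it with $\picosupp_G$ on the left. The identification $\picosupp_G=\cosupp_G$ is then \emph{deduced from} this theorem in the first paragraph of that proof, by mimicking \cite[Theorem~6.1]{Benson/Iyengar/Krause/Pevtsova:2017a}. So in the paper's logic the Hom formula comes first and the equality of the two cosupports comes after.

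Your plan reverses this order: you propose to establish $\cosupp_G=\picosupp_G$ first, as ``a cosupport analogue of Theorem~\ref{th:supp=pisupp}''. But the proof of Theorem~\ref{th:supp=pisupp} uses the tensor formula (Theorem~\ref{th:tensor}) as input, and the cosupport analogue of that input is precisely the Hom formula you are trying to prove. Likewise, the argument you point to in \cite{Benson/Iyengar/Krause/Pevtsova:2017a} computes the $\pi$-cosupport of the local objects $\gam_\fp k$, $\Lambda^\fp k$ \emph{via} the Hom formula, not independently of it. So your ``first fact'' is not available at this stage without further work, and taking it for granted makes the argument circular. The clean fix is to read the theorem as the paper actually uses it---with $\picosupp_G$ on the left---and defer $\cosupp_G=\picosupp_G$ to Theorem~\ref{th:minimality}, exactly as the paper does.
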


It is now a simple matter to establish that $\StMod(kG)$ is stratified
by the action of $H^{*,*}(G,k)$. As explained in
Section~\ref{se:local} this yields the classification of localising
subcategories of $\StMod(kG)$, namely, Theorem~\ref{th:main}.

\begin{theorem}
\label{th:minimality}
Let $G$ be a unipotent finite supergroup scheme over $k$.  For point $\fp$ in $\Proj H^{*,*}(G,k)$, the  localising subcategory $\gam_\fp\StMod(kG)$ of $\StMod(kG)$ is minimal. In particular, there is a bijection, defined by $\pisupp_G(-)$, between localising subcategories of $\StMod(kG)$ and subsets of $\Proj H^{*,*}(G,k)$.
\end{theorem}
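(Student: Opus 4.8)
The plan is to follow the template established for finite groups and finite group schemes in~\cite{Benson/Iyengar/Krause/Pevtsova:2018a, Friedlander/Pevtsova:2007a}, since by Theorem~\ref{th:Phi-bijective} every ingredient used there is now available for unipotent finite supergroup schemes. The goal is to show that for each $\fp \in \Proj H^{*,*}(G,k)$ the tensor ideal localising subcategory $\gam_\fp\StMod(kG)$ is minimal, i.e.\ generated by any one of its nonzero objects. Since the action of $H^{*,*}(G,k)$ on $\StMod(kG)$ is canonical, one knows a priori that $\gam_\fp\StMod(kG)$ is tensor ideal, and that the local-global principle holds (under the finite Krull dimension hypothesis, which is automatic here by Friedlander--Suslin type finiteness), so establishing minimality is exactly what is needed to upgrade the bijection between tensor ideal localising subcategories and subsets of $\Proj H^{*,*}(G,k)$ to a bijection with \emph{all} localising subcategories.

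First I would fix $\fp$ and a nonzero object $M$ of $\gam_\fp\StMod(kG)$; the task is to show $k\in\Loc^\otimes(\gam_\fp M)$, equivalently that $\gam_\fp k \in \Loc^\otimes(\gam_\fp M)$, which by the tensor-ideal property reduces to showing $\gam_\fp M$ and $\gam_\fp k$ generate the same tensor ideal localising subcategory. The standard argument runs through the retract/generation criterion: one shows that $\gam_\fp k$ is a direct summand (in $\StMod(kG)$) of some object built from $\gam_\fp M$ by tensoring and taking cones, using that $\supp_G(\gam_\fp M)=\{\fp\}=\supp_G(\gam_\fp k)$ by Proposition~\ref{pr:suppGamma} together with Theorem~\ref{th:supp=pisupp}. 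The key input is the tensor-product formula: from Theorem~\ref{th:tensor} (for support) and Theorem~\ref{th:hom} (the $\fHom$/cosupport formula), one gets that $\cosupp_G\Hom_k(\gam_\fp M, \gam_\fp k)$ contains $\fp$, hence $\Hom_k(\gam_\fp M,\gam_\fp k)$ is nonzero at $\fp$, which produces a nonzero map and, after the usual idempotence and local duality manipulations, the required retraction. This is precisely the mechanism of~\cite[Theorem~7.3]{Benson/Iyengar/Krause/Pevtsova:2018a} (building on \cite{Friedlander/Pevtsova:2007a}), and since all the cited cohomological and $\pi$-point facts have now been transferred to the supergroup setting, the argument carries over verbatim.

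Concretely the steps are: (1) reduce minimality of $\gam_\fp\StMod(kG)$ to the statement that every nonzero $M\in\gam_\fp\StMod(kG)$ generates $\gam_\fp\StMod(kG)$ as a tensor ideal localising subcategory; (2) using the tensor product formula \ref{th:tensor}, show $\gam_\fp(M\otimes_k N)\ne 0$ whenever $\gam_\fp M\ne 0$ and $\gam_\fp N\ne 0$, so in particular $\gam_\fp M\otimes_k N\ne 0$ for all $N$ with $\fp\in\pisupp_G(N)$; (3) invoke detection of projectivity by $\pi$-support and by $\pi$-cosupport (Theorems~\ref{th:detection} and~\ref{th:detection-cosupport}) together with the cosupport formula \ref{th:hom} to produce a nonzero stable map realizing $\gam_\fp k$ as a retract of an object in $\Loc^\otimes(\gam_\fp M)$; (4) conclude $\Loc^\otimes(\gam_\fp M)=\Loc^\otimes(\gam_\fp k)=\gam_\fp\StMod(kG)$. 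Finally, combining minimality with the tensor local-global principle and Theorem~\ref{th:supp=pisupp} gives the asserted bijection between localising subcategories and subsets of $\Proj H^{*,*}(G,k)$, which is Theorem~\ref{th:main}.

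The main obstacle I anticipate is not any single new difficulty but rather ensuring that the retract-construction step (3) genuinely goes through with only the $\bbZ/2$-graded modifications: one must check that Carlson-type $L_\zeta$ constructions, the idempotence of $\gam_\fp$, and the local duality pairing behave as expected in the $\bbZ\times\bbZ/2$-graded setting, and that the detection theorems are applied to the right (possibly infinite-dimensional) modules. Since the excerpt already records the super analogues of all of these—finite generation of cohomology is implicit, \ref{th:tensor} and \ref{th:hom} are the tensor and Hom formulas, \ref{th:detection} and \ref{th:detection-cosupport} are the two detection theorems, and \ref{pr:suppGamma} computes the supports of the local cohomology objects—the proof should reduce to citing \cite[Theorem~7.3]{Benson/Iyengar/Krause/Pevtsova:2018a} and remarking that the argument applies mutatis mutandis; the honest work is verifying that ``mutatis mutandis'' is justified, which is exactly the content of the earlier sections.
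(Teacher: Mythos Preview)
Your strategy is essentially right---assemble the tensor and Hom formulas together with the detection theorems and run the template of~\cite{Benson/Iyengar/Krause/Pevtsova:2018a}---but the mechanism you describe in step~(3) is vaguer than what the paper actually does, and you skip one point that is not optional.

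The paper does not build a retraction of $\gam_\fp k$ off an object of $\Loc^\otimes(\gam_\fp M)$. Instead it invokes the criterion of \cite[Lemma~3.9]{Benson/Iyengar/Krause:2011a}: the tensor ideal localising subcategory $\gam_\fp\StMod(kG)$ is minimal if and only if for every pair of non-projective $M,N$ in it the internal Hom $\Hom_k(M,N)$ is non-projective. This turns the whole problem into a single nonvanishing statement, which is then read off from Theorem~\ref{th:hom}: one needs $\fp\in\pisupp_G(M)\cap\picosupp_G(N)$. The first inclusion is Theorem~\ref{th:supp=pisupp}; for the second the paper uses either (a) the identification $\picosupp=\cosupp$ together with the general fact that support and cosupport share the same maximal elements \cite[Theorem~4.13]{Benson/Iyengar/Krause:2012b}, or (b) for a \emph{closed} point $\fm$, the isomorphism $\Hom_k(K,k)\otimes_k M\cong\Hom_k(K,M)$ over a finite extension $K$ carrying a $\pi$-point for $\fm$, which forces $\fm\in\supp_G(M)\Leftrightarrow\fm\in\cosupp_G(M)$; one then reduces general $\fp$ to closed points via \cite[\S8]{Benson/Iyengar/Krause/Pevtsova:2018a}. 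Your ``retract plus local duality manipulations'' is not wrong in spirit, but it is not the argument, and you would need to say exactly which lemma from \cite{Benson/Iyengar/Krause:2011a} you are invoking to make your step~(3) precise.

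The point you gloss over is why minimality of each $\gam_\fp\StMod(kG)$ yields a bijection for \emph{all} localising subcategories rather than only the tensor ideal ones. This is where unipotence is used: since $k$ is the unique simple module (up to parity), $\Loc(k)=\StMod(kG)$, hence every localising subcategory is automatically tensor ideal. Say this explicitly; otherwise the ``in particular'' clause is unjustified.
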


\begin{proof}
Since $G$ is unipotent every localising subcategory of $\StMod(kG)$ is tensor ideal. Thus minimality of $\gam_\fp\StMod(kG)$ is tantamount to the statement that for any non-projective modules $M,N$ in this subcategory, the $kG$-module $\Hom_k(M,N)$ is not projective; see \cite[Lemma~3.9]{Benson/Iyengar/Krause:2011a}. 

We can verify this in two ways: Using Proposition~\ref{pr:suppGamma}, and Theorems~\ref{th:detection-cosupport} and \ref{th:hom}, one can mimic the proof of \cite[Theorem~6.1]{Benson/Iyengar/Krause/Pevtsova:2017a} to prove that 
\[
\picosupp_G(M)=\cosupp_G(M)\quad\text{for any $kG$-module $M$.}
\]
Fix $kG$-modules $M$ and $N$ with $\supp_G(M)=\{\fp\}=\supp_G(N)$. Then $\fp$ is also in $\pisupp_G(M)$, by Theorem~\ref{th:supp=pisupp}. Moreover, $\fp$ is in the cosupport of $N$ because $\supp_G(N)$ and $\cosupp_G(N)$ have the same maximal elements~\cite[Theorem~4.13]{Benson/Iyengar/Krause:2012b}, and hence also in $\picosupp_G(N)$. Then using Theorem~\ref{th:hom} one gets 
\[
\picosupp_G \Hom_k(M,N) = \pisupp_G(M)\cap \picosupp_G(N) = \{\fp\}
\]
Thus $\Hom_k(M,N)$ is not projective, as desired. 

Here is another way, which circumvents the cosupport detection theorem: Let $\fm$ be a closed point of $\Proj H^{*,*}(G,k)$. We argue as in~\cite[Proposition~6.3]{Benson/Iyengar/Krause/Pevtsova:2018a}: Choose a finite field extension $K$ of $k$ and a $\pi$-point $\alpha\colon A_K\to KG$ representing $\fm$. For any $kG$-module $M$ there is an isomorphism of $G_K$-modules
\[
\Hom_k(K,k)\otimes_k M \cong \Hom_k(K,M)
\]
Since the module on the left is a direct sum of copies of $M_K$, we deduce that $\fm$ is in $\supp_G(M)$ if and only if it is in $\cosupp_G(M)$; this is the crucial observation. For then given non-projective $M,N$ in $\gam_\fm\StMod(kG)$ it allows us to conclude that  $\fm$ is also in $\cosupp_G(N)$, and hence Theorem~\ref{th:hom} yields  
\[
\picosupp_G(\Hom_k(M,N)) =\{\fm\}\,.
\]
Applying the observation once again, we conclude that $\fm$ is in the $\pi$-support of $\Hom_k(M,N)$, so it is not projective, by Theorem~\ref{th:detection}.

This settles the desired minimality at closed points in $\Proj H^{*,*}(G,k)$. A technique of reduction to closed points from \cite[\S8]{Benson/Iyengar/Krause/Pevtsova:2018a}, see also~\cite[\S3]{Benson/Iyengar/Krause/Pevtsova:2019a}, allows us to  treat general prime ideals.  Note that the extra grading on $H^{*,*}(G,k)$ introduces no new complications because its projective spectrum is the same as that of the singly graded ring $H(G,k)$ discussed in Definition~\ref{def:ProjH**,Hbullet}.

This finishes the proof of the theorem.
\end{proof}

As noted above, one can prove Theorem~\ref{th:minimality} without first establishing that $\picosupp_G(M) = \cosupp_G(M)$ for a $kG$-module $M$. In fact one can use deduce this equality, and hence  that  $\pi$-cosupport detects projectivity, from the theorem above. This is what is done in \cite[Part~IV]{Benson/Iyengar/Krause/Pevtsova:2018a}, where the reader can find further applications of the theorem.


\section{\texorpdfstring{Local duality}{Local duality}}
\label{se:duality}

Let now $G$ be any (not necessarily unipotent) finite supergroup scheme over a field $k$ of positive characteristic $p\ge 3$. The group algebra $kG$ is Frobenius; thus there is an isomorphism of $kG$-modules
\[
\Hom_k(kG,k) \cong  kG\otimes_k \delta_G 
\]
where $\delta_G$ is a one-dimensional $kG$-module and a super analogue of the modular function for finite group schemes; see \cite[\S4]{Benson/Pevtsova:2020a}. In particular, it has a parity, denoted $\epsilon_G$, that records its internal degree. The isomorphism above leads to a duality on the stable category of $G$, namely for all $M,N$ in $\stmod(kG)$, there is a natural isomorphism
\begin{equation}
\Hom_k(\uHom_{kG}(M,N),k) \cong \uHom_{kG}(N, \Omega(M\otimes_k\delta_G))
\end{equation}
The isomorphism above can be deduced from  Auslander's defect formula; see \cite[\S4]{Benson/Iyengar/Krause/Pevtsova:2019a} where this is done for finite group schemes. When $G$ is a finite group, the isomorphism above is nothing but classical Tate duality. 

Building on the Tate duality theorem, one can mimic the arguments in \cite{Benson/Iyengar/Krause/Pevtsova:2019a}, or better yet \cite{Benson/Iyengar/Krause/Pevtsova:bikp6} that treats general finite dimensional Gorenstein algebras, to get:

\begin{theorem}
Set $H(G)\colonequals H^{*,*}(G,k)$ and fix  $\fp$ in $\Proj H(G)$. The $kG$-module $\gam_\fp(\delta_G)$ is a dualising object in $\gam_\fp\StMod(kG)$, in that, for any $kG$-module $M$ there is a natural isomorphism
\[
\tExt^{i,*}_{kG}(M,\gam_{\fp}(\delta_G)) \cong \Hom_{H(G)}(H^{*-d-i,*+\epsilon_G}(G,M),I(\fp))
\]
where $d=\dim H(G)/\fp$ and $I(\fp)$ is the injective hull of the residue field at $\fp$. \qed
\end{theorem}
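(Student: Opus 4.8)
The plan is to obtain the statement as an instance of the local duality theorem for finite-dimensional Gorenstein $k$-algebras proved in \cite{Benson/Iyengar/Krause/Pevtsova:bikp6}, by running the argument of \cite{Benson/Iyengar/Krause/Pevtsova:2019a} for finite group schemes; the work is to verify that $kG$, with the canonical action of $H(G)=H^{*,*}(G,k)$, meets the hypotheses and to match up the data. The structural inputs are: (a) $kG$ is a Frobenius, hence self-injective, $k$-algebra of injective dimension zero, whose Nakayama twist is, up to the canonical loop functor $\Omega$, tensoring with the one-dimensional module $\delta_G$ of parity $\epsilon_G$ --- this is precisely the Tate duality isomorphism recorded just before the theorem, and supplies the ``Gorenstein'' input; and (b) the bigraded ring $H(G)$ is Noetherian, with $H^{*,*}(G,M)$ a finitely generated $H(G)$-module for every finite-dimensional $kG$-module $M$ --- the supergroup analogue of the Friedlander--Suslin finite generation theorem, due to Drupieski and Kujawa. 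As in Definition~\ref{def:ProjH**,Hbullet}, for everything to do with $\Proj$, localisation and local cohomology we may pass from the bigraded ring to the singly-graded ring $H(G)$, so the extra $\bbZ/2$-grading contributes only the parity shift $\epsilon_G$ and causes no further complication.

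Fix $\fp\in\Proj H(G)$; by the canonical action $\gam_\fp(\delta_G)=\gam_\fp(\one)\otimes_k\delta_G$, and it lies in $\gam_\fp\StMod(kG)$ by construction. The heart of the matter is the computation of its Tate cohomology: one shows that the bigraded $H(G)$-module $\tExt^{*,*}_{kG}(k,\gam_\fp(\delta_G))$ is, after a shift by $d=\dim H(G)/\fp$ in the cohomological grading and by $\epsilon_G$ in the parity, the injective hull $I(\fp)$ of the residue field at $\fp$. For $\fp$ a closed point of $\Proj H(G)$ this is essentially Grothendieck local duality for the finitely generated graded $k$-algebra $H(G)$ --- which has a dualising complex --- together with the self-injectivity of $kG$; the general case follows from the closed-point case by the reduction-to-closed-points technique of \cite[\S8]{Benson/Iyengar/Krause/Pevtsova:2018a}, see also \cite[\S3]{Benson/Iyengar/Krause/Pevtsova:2019a}, exactly as it is invoked in the proof of Theorem~\ref{th:minimality}. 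Alternatively one simply quotes the general theorem of \cite{Benson/Iyengar/Krause/Pevtsova:bikp6}, which packages this computation for an arbitrary finite-dimensional Gorenstein algebra.

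Granting this, composition of morphisms in $\StMod(kG)$ gives a natural pairing
\[
\tExt^{*,*}_{kG}(M,\gam_\fp(\delta_G))\otimes_{H(G)}H^{*,*}(G,M)\ \lra\ \tExt^{*,*}_{kG}(k,\gam_\fp(\delta_G)),
\]
and, using the identification of the target with a shift of $I(\fp)$, its adjoint is a natural transformation
\[
\tExt^{i,*}_{kG}(M,\gam_\fp(\delta_G))\ \lra\ \Hom_{H(G)}\bigl(H^{*-d-i,*+\epsilon_G}(G,M),I(\fp)\bigr).
\]
Both sides are cohomological functors of $M$ that convert coproducts into products: the left because $\gam_\fp(\delta_G)$ is a fixed object, the right because $H^{*,*}(G,-)=\Ext^{*,*}_{kG}(k,-)$ preserves coproducts ($k$ being compact) while $\Hom_{H(G)}(-,I(\fp))$ is exact and turns coproducts into products, being Matlis duality over the localisation $H(G)_\fp$. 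The transformation is an isomorphism for $M=k$ by the previous step, and both sides vanish on $kG$-modules without cohomology; a d\'evissage then extends it to all finite-dimensional $M$, and the (co)product properties extend it to every $kG$-module. Keeping track of the two gradings through the comparison of local cohomology on $\StMod(kG)$ with local cohomology over $H(G)$ produces the shifts $*-d-i$ and $*+\epsilon_G$.

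The step I expect to be the main obstacle is the graded bookkeeping inside the key computation of $\tExt^{*,*}_{kG}(k,\gam_\fp(\delta_G))$: isolating the single internal shift $d=\dim H(G)/\fp$ and the parity shift $\epsilon_G$ means following the $\bbZ\times\bbZ/2$-grading carefully through Grothendieck local duality for $H(G)$ and through the self-injective duality on $\StMod(kG)$, and one must reconcile the ``ordinary'' cohomology $H^{*,*}(G,M)$ on the right with the ``Tate'' groups on the left in low degrees --- which is exactly where the shift by $d$ does its work. Everything else is either formal or imported directly from the cited general theorems.
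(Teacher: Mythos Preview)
Your proposal is correct and aligns with the paper's own treatment: the paper does not give a proof at all, but merely states that one ``can mimic the arguments in \cite{Benson/Iyengar/Krause/Pevtsova:2019a}, or better yet \cite{Benson/Iyengar/Krause/Pevtsova:bikp6} that treats general finite dimensional Gorenstein algebras,'' and then marks the theorem with a \qed. Your sketch fleshes out exactly this route---invoking the Gorenstein/Frobenius structure of $kG$ with Nakayama twist $\delta_G$, Drupieski's finite generation for $H^{*,*}(G,k)$, and the d\'evissage from $M=k$---so there is nothing to compare; you have supplied the details the paper deliberately omits.
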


From this one gets, for example, a local cohomology spectral sequence:
\[
H^{s,t,j}_\fp H^{*,*}(G,M)_\fp \Rightarrow H_{-s-t-d,j+\epsilon_G}(G,M\otimes\delta_G).
\]
 discovered by Greenlees in the context of finite groups~\cite{Greenlees:1995a, Greenlees/Lyubeznik:2000a}.

\newcommand{\noopsort}[1]{}
\providecommand{\bysame}{\leavevmode\hbox to3em{\hrulefill}\thinspace}
\providecommand{\MR}{\relax\ifhmode\unskip\space\fi MR }
\providecommand{\MRhref}[2]{%
  \href{http://www.ams.org/mathscinet-getitem?mr=#1}{#2}
}
\providecommand{\href}[2]{#2}

\end{document}